\title{On compact K\"ahler orbifold}
\author{Xiaojun WU}
\date{\today}
\newtheorem{mythm}{Theorem}
\newtheorem{mylem}{Lemma}
\newtheorem{myprop}{Proposition}
\newtheorem{mycor}{Corollaire}
\newtheorem{mydef}{Definition}
\newtheorem{myrem}{Remark}
\newtheorem{myex}{Example}
\begin{document}
\def\cI{\mathcal{I}}
\def\Z{\mathbb{Z}}
\def\Q{\mathbb{Q}}  \def\C{\mathbb{C}}
 \def\R{\mathbb{R}}
 \def\N{\mathbb{N}}
 \def\H{\mathbb{H}}
  \def\P{\mathbb{P}}
 \def\cS{\mathcal{S}}
  \def\cC{\mathcal{C}}
  \def\cF{\mathcal{F}}
  \def\Hom{\mathrm{Hom}}
  \def\d{\partial}
 \def\dbar{{\overline{\partial}}}
\def\dzbar{{\overline{dz}}}
 \def\ii{\mathrm{i}}
  \def\d{\partial}
 \def\dbar{{\overline{\partial}}}
\def\dzbar{{\overline{dz}}}
\def \ddbar {\partial \overline{\partial}}
\def\cD{\mathcal{D}}
\def\cE{\mathcal{E}}  \def\cO{\mathcal{O}}
\def\P{\mathbb{P}}
\def\cI{\mathcal{I}}
\def \sim{\mathrm{sim}}
\def \reg{\mathrm{reg}}
\def \sing{\mathrm{sing}}
\def \an{\mathrm{an}}
\def \dR{\mathrm{dR}}
\def \top{\mathrm{top}}
\def \dim{\mathrm{dim}}
\def \Todd{\mathrm{Todd}}
\def \Spec{\mathrm{Spec}}
\def \tors{\mathrm{Tors}}
\def \det{\mathrm{det}}
\def \rk{\mathrm{rk}}
\def \ch{\mathrm{ch}}
\def \Id{\mathrm{Id}}
\def \cH{\mathscr{H}}
\def \ker{\mathrm{Ker}}
\def \rank{\mathrm{rank}}
\def \exph{\mathrm{exph}}
\def \Gr{\mathrm{Gr}}
\def \nn{\mathrm{nn}}
\def \im{\mathrm{Im}}
\def\cG{\mathcal{G}}
\def\cQ{\mathcal{Q}}
\def\SO{\mathrm{SO}}
\def\GL{\mathrm{GL}}
\def\BG{\mathrm{BG}}
\def\EG{\mathrm{EG}}
\def \Lie{\mathrm{Lie}}
\def \End{\mathrm{End}}
\def \Aut{\mathrm{Aut}}
\def \Ker{\mathrm{Ker}}
\def\g{\mathfrak{g}}
\def \red{\mathrm{red}}
\def \dim{\mathrm{dim}}
\def \Alb{\mathrm{Alb}}
\maketitle
\begin{abstract}
In this note, we study compact complex orbifolds. In the first part, we shows the equivalence of two notions of K\"ahler orbifold. 
In the second part, we shows various versions of Demailly's regularisation theorems for compact orbifold and study the positivity of orbifold vector bundle.
In the last section, we give a version of equivariant GAGA communicated to us by Brion.
\end{abstract}
\section{Introduction}
In this note, we study compact complex orbifold. Roughly speaking, a complex orbifold is a complex space which is locally a quotient of an open set of some Euclidean space under a homomorphic action of finite group.
There are two natural ways to endow a complex orbifold with a K\"ahler structure.
The first way is to define a complex orbifold to be K\"ahler if it is a K\"ahler complex space (recalled in Section 2).
The second way is to define a complex orbifold to be K\"ahler if there exist K\"ahler forms on each local (smooth) ramified cover which ``coincide" on the intersection.

From the algbraic geometric point of view, the first definition in the sense of complex space is more natural since the restriction of the Fubini-Study metric on a projective orbifold endows the orbifold a K\"ahler complex space structure.
From the analytic point of view, the second definition in the sense of orbifold is more natural since the basic tools like Sobolev embedding, elliptic regularity work identically over an orbifold as in the manifold setting.

In Section 2 of this note, we shows the equivalence of these two definitions for a compact complex orbifold.
In particular, in a fixed cohomology class, there exists a K\"ahler form in the sense of complex space if and only if there exists a K\"ahler form in the sense of orbifold.

The proof of ``if" part follows from the version of regularisation of continuous psh functions provided by Varouchas \cite{Var}.
The proof of ``only if" part follows from the partition of unity of local strictly psh functions.

We also construct the first Chern class of any coherent sheaf over a compact orbifold.
Note that in general, the Hilbert syzygy does not hold for orbifolds even if the orbifold is projective.
In particular, we cannot easily define the Chern classes of a coherent sheaf by a resolution of the coherent sheaf by locally free sheaves. 
Thus it is non-trivial to define the Chern classes of a coherent sheaf over a compact complex orbifold. 
The case of first Chern class follows from the fact that the second cohomology of the compact orbifold is isomorphic to the second cohomology of its regular part.
Without specification, we work with singular cohomology in the whole paper.

At the end of this section, using the result of \cite{Fau19}, we deduce the following version of Bogomolov inequality.
\paragraph{}

{\bf Proposition A.} {\it 
Let $E$ be an orbifold vector bundle over a compact K\"ahler complex space $(X, \omega)$ with quotient singularity.
Assume that $E$ viewing as an orbifold sheaf is $\omega-$polystable for all orbifold subsheaves.
Then the Bogomolov inequality for $E$ holds.
}
\paragraph{}
In Section 3, we verify the techniques of Demailly's regularisation for compact complex orbifolds.
Note that an orbifold current is said to be (almost) positive if it is (almost) positive over each ramified smooth cover.
Similiarly, we can define orbifold (quasi or almost) psh functions.
Note that such ramified smooth cover is unique as germs by the uniqueness of local isotropy group (recalled in Section 2).
The Lelong number and multiplier ideal sheaf of an orbifold quasi-psh function can also be defined to be corresponding notions over the ramified smooth cover.
\paragraph{}

{\bf Theorem A.} {\it 
Let $T$ be a closed almost positive $(1,1)-$orbifold current over a compact complex orbifold $X$ and
let $\alpha$ be a smooth real $(1,1)-$orbifold form in the same $\d \dbar-$cohomology class as $T$, i.e.
$T = \alpha + i \d \dbar \Psi$ where $\Psi$ is an almost psh orbifold function (i.e. almost psh on the local ramified smooth cover). Let $\gamma$ be a continuous real
$(1,1)-$orbifold form such that $T\geq \gamma$. Suppose that the orbifold tangent bundle $T_X$ is equipped with a smooth Hermitian orbifold
metric $\omega$ such that the orbifold Chern curvature form $\Theta(T_X)$ satisfies
$$(\Theta(T_X ) + u \otimes \Id_{T_X}
)(\theta \otimes \xi,\theta \otimes \xi)  \geq 0, \forall \theta, \xi \in T_X \; \mathrm{with} \;\langle \theta, \xi \rangle= 0,$$
for some continuous nonnegative $(1,1)-$orbifold form $u$ on $X$. Then there is a family of
closed almost positive (1,1)-orbifold forms $T_\varepsilon=\alpha+i \d \dbar \psi_\varepsilon$, such that $\psi_\varepsilon, \forall \varepsilon \in ]0, \varepsilon_0[$ is orbifold
smooth over $X$, increases with $\varepsilon$, and converges to $\Psi$ as $\varepsilon$ tends to 0 (in particular,
$T_\varepsilon$ is orbifold smooth and converges weakly to $T$ on $X$), and such that

(i) $T_\varepsilon \geq \gamma -\lambda_\varepsilon u-\delta_\varepsilon \omega$ where:

(ii) $\lambda_\varepsilon (x)$ is an increasing family of continuous functions on $X$ such that
$\lim_{\varepsilon \to 0} \lambda_\varepsilon (x) = \nu(T,x)$ (Lelong number of $T$ at $x$, defined on the local ramified smooth cover) at every point,

(iii) $\delta_\varepsilon$ is an increasing family of positive constants such that $\lim_{\varepsilon \to 0} \delta_\varepsilon = 0$.

On the other hand, we have the following singularity attenuation process.
For every $c > 0$, there
is a family of closed almost positive (1,1)-orbifold currents $T_{c,\varepsilon }= \alpha + i\d \dbar \psi_{c,\varepsilon}, \varepsilon \in ]0,\varepsilon_0[$,
such that $\psi_{c,\varepsilon}$ is orbifold smooth on $X \setminus E_c(T)$, increasing with respect to $\varepsilon$, and converges
to $\Psi$ as $\varepsilon$ tends to 0 (in particular, the current $T_{c,\varepsilon}$ is orbifold smooth on $X \setminus E_c(T)$ and
converges weakly to $T$ on $X$), and such that

(i) $T_{c,\varepsilon}\geq \gamma-\min(\lambda_\varepsilon,c)u -\delta_\varepsilon \omega$ where:

(ii) $\lambda_\varepsilon(x)$ is an increasing family of continuous functions on $X$ such that
$\lim_{\varepsilon \to 0} \lambda_\varepsilon (x) = \nu(T,x)$

(iii) $\delta_\varepsilon$ is an increasing family of positive constants such that $\lim_{\varepsilon \to 0} \delta_\varepsilon = 0$,

(iv) $\nu(T_{c,\varepsilon},x) = (\nu(T,x) -c)_+$ at every point $x \in X$.
}
\paragraph{}

{\bf Theorem B.} {\it 
Let $\varphi$ be an almost psh orbifold function on a compact complex orbifold
$X$ such that 
$i \d \dbar \varphi \geq \gamma$ for some continuous (1,1)-orbifold form $\gamma$. Then there is a sequence of
almost psh orbifold functions $\varphi_m$ such that $\varphi_m$ has the same singularities (on the local ramified smooth cover) as a logarithm of a sum
of squares of holomorphic functions and a decreasing sequence $\varepsilon_m > 0$ converging to 0
such that

(i) $\varphi(x) \leq \varphi_m(x) \leq \sup_{|\xi-x|<r}
\varphi(\xi) + C
(|\log r|/
m + r + \varepsilon_m
)$
with respect to ramified smooth coordinate open sets covering $X$. In particular, $\varphi_m$ converges to $\varphi$
pointwise and in $L^1(X)$ and

(ii) $\nu(\varphi,x) -\frac{n}{m} \leq \nu(\varphi_m,x) \leq \nu(\varphi,x)$ for every $x \in X$;

(iii)$i \d \dbar \varphi_m \geq \gamma -\varepsilon_m \omega$.
}
\paragraph{}

{\bf Theorem C.} {\it 
Let $\varphi$ be an almost psh orbifold function on a compact complex orbifold
$X$ such that 
$i \d \dbar \varphi \geq \gamma$ for some continuous (1,1)-orbifold form $\gamma$. Then there is a sequence of
almost psh orbifold functions $\varphi_m$ such that $\varphi=\lim_{m \to \infty}\varphi_m$
and

(i) $\varphi_m$ is orbifold smooth in the complement $X \setminus Z_m$
of an analytic set $Z_m \subset X$;

(ii) $\varphi_m$ is a decreasing sequence, and $Z
_m \subset Z_{m+1}$ for all $m$;

(iii)
$\int_X (e^{-2\varphi} - e^{-2\varphi_m} ) dV_\omega$ is finite for every $m$ and converges to 0 as $m \to \infty$;

(iv)
$\cI(\varphi_m)=\cI(\varphi)$
for all $m$ (“equisingularity”) on the local ramified smooth cover;

(v) $i \d \dbar \varphi_m \geq \gamma-\epsilon_m \omega$
where $\lim_{m \to \infty} \epsilon_m=0$.
}
\paragraph{}
In Section 4, we study the strongly pseudoeffective (strongly psef for short) orbifold vector bundle over a compact orbifold which generalises the results of \cite{Wu20}.

In the definition of the pseudoeffective vector bundle over a compact complex manifold, an additional condition is made on the approximate singular metrics on the tautological line bundle.
One may ask whether the additional condition could be made directly on some positive singular metric on the tautological line bundle (without approximation).
For example, let $h$ be a positive singular metric on $\cO_{\P(E)}(1)$, could we consider the condition that the projection of singular set $\{h = \infty\}$ in $X$ is a (complete) pluripolar set?
However, this kind of condition does not behave well functorially as shown in Example 8 suggested to the author by Demailly.

Next, we show the analogue of Theorem 1.18 \cite{DPS94} in the orbifold setting.
\paragraph{}

{\bf Theorem D.} {\it
Suppose that $X$ is a compact K\"ahler orbifold. Then an orbifold vector bundle
$E$ over $X$ is numerically flat (i.e. both $E,E^*$ are nef) if and only if $E$ admits a filtration
$$\{0\}= E_0  \subset E_1 \subset \cdots \subset E_p = E$$
by orbifold vector subbundles such that the quotients $E_k/E_{k-1}$ are orbifold Hermitian flat vector bundles.
}
\paragraph{}
The main difficulty is the following.
Given a torsion-free coherent sheaf over an irreducible compact complex space, there always exists a smooth model such that the pullback of the coherent sheaf modulo torsion is locally free.
Thus one can reduce to argue in the vector bundle case.
However, it is unclear that given a torsion-free orbifold coherent sheaf over a compact orbifold, there exists a modification as a composition of orbifold morphisms such that the pullback of the sheaf modulo torsion is an orbifold vector bundle.
Thus instead of showing all Chern class's inequalities as in Section 2 of \cite{DPS94}, we use only Bogomolov inequality shown in Proposition A.

As a geometric application, we obtain the following generalisation of Theorem 7.7 in \cite{BDPP}.
\paragraph{}

{\bf Corollary A.} {\it
For a compact K\"ahler orbifold $X$, if $c_1(X) = 0$ and $T_X$ is strongly psef, then a finite quasi-\'etale cover of $X$
is a torus.
In particular, an irreducible symplectic, or Calabi-Yau orbifold does not have a strongly psef orbifold tangent bundle or orbifold cotangent bundle.}
\paragraph{}
The proof relies on the orbifold version of the Beauville-Bogomolov decomposition theorem proven by Campana \cite{Cam04}.

\paragraph{}
Based on the orbifold version of Demailly's regularisation in Section 3, we can generalise the Serge current techniques in \cite{Wu20} to the orbifold setting.
\paragraph{}
{\bf Theorem E.} {\it 
Let $\pi: X \to Y$ be an orbifold submersion between compact K\"ahler orbifolds of relative dimension $r-1$. 
Let $T$ be a closed positive $(1,1)-$orbifold current in the cohomology class $\{ \alpha \} \in H^{1,1}(X, \R)$
such that $T$ has analytic singularities (meaning having analytic singularities in each ramified smooth cover) and is orbifold smooth on $X \setminus \pi^{-1}(Z)$ with $Z$ a closed analytic set of codimension at least $k$.
Assume that for any $y \in Y$, there exist an open neighborhood $U$ of $y$ and a quasi-psh orbifold function $\psi$ on $X$ such that
$\alpha + i \d \dbar \psi \geq 0$
in the sense of orbifold currents on
$\pi^{-1}(U)$ and $\psi$ is orbifold smooth outside a closed analytic set of codimension at least $k+r$.
Then there exists a closed positive orbifold current in the cohomology class $\pi_* \alpha^{r+k-1}$.
}
\paragraph{}
Based on an example of Zhang \cite{Zha91}, \cite{Zha93}, we show that the possible generalisation of \cite{BDPP} to the category of projective orbifolds could not hold.
More precisely, there exists a rational projective variety $X$ with quotient singularities such that the orbifold canonical line bundle is pseudoeffective.


The last section is a separate topic from the others.
Let $X$ be a projective manifold. 
Let $\cF$ be a holomorphic coherent sheaf over $X$.
Serre's GAGA implies that
$\cF$ is the analytification of some algebraic coherent sheaf.
It is a non-trivial question if there exists some algebraic action of some algebraic group $L$ on $X$ that an $L-$equivariant holomorphic vector bundle is the analytification of some $L-$equivariant algebraic vector bundle. 
Since $L$ is not compact, the usual GAGA does not apply. 
For some kind of $L$, we have the following equivariant version of GAGA communicated to us by Brion.
\paragraph{}

{\bf Proposition A. (Brion)} {\it 
 Let $X$ be a projective manifold with an (algebraic) action of a connected reductive group $L$.
Let $E$ be a holomorphic $L-$equivariant vector bundle over $X$.
Then there exists a connected finite cover $L'$ of $L$ such that $E$ is an algebraic $L'-$equivariant vector bundle.
}
\paragraph{}
This is a generalisation of Theorem 5.2.1 of \cite{Bri18} of rank one case to higher rank case.

We also have the following characterisation of projective toric varieties.
\paragraph{}

{\bf Proposition B. } {\it
Let $X$ be a connected compact K\"ahler manifold which is $(\C^*)^n-$quasi-homogeneous with $n=\dim_\C(X)$ such that the action on the open dense set is the multiplication of $(\C^*)^n$.
Then $X$ is a projective toric variety.
}
\paragraph{}
\textbf{Acknowledgement} I thank Jean-Pierre Demailly, my PhD supervisor, for his guidance, patience, and generosity. 
I would like to thank my post-doc mentor Mihai P\u{a}un for many supports.
I would like to thank Junyan Cao, Patrick Graf, Daniel Greb, Mihai P\u{a}un, Thomas Peternell, Philipp Naumann for some very useful suggestions on the previous draft of this work.
In particular, I warmly thank Michel Brion for providing me with the arguments of the last section.
This work is supported by DFG Projekt Singuläre Hermitianische Metriken für Vektorbündel und Erweiterung kanonischer Abschnitte managed by Mihai P\u{a}un.
\section{Preliminaries on orbifold}
For the simplicity of the exposition, we recall some basic definitions related to complex orbifolds, borrowed form \cite{MM}.
We first define a category $\mathcal{M}_s$ (resp. $\mathcal{M}_k$ ) as follows: the objects of $\mathcal{M}_s$ are the class of
pairs $(G, M )$ where $M$ is a connected smooth (resp. connected smooth K\"ahler) manifold and $G$ is a finite group
acting effectively on $M$ (i.e., if $g \in G$ satisfies $gx = x$ for any $x \in M$, then $g$ is
the unit element of $G$). If $(G, M )$ and $(G',M')$ are two objects, then a morphism
$\Phi:(G,M) \to (G',M')$ is a family of open holomorphic embeddings $\varphi: M \to M'$ (resp. preserving the K\"ahler form) satisfying:
\begin{enumerate}
\item For each $\varphi \in \Phi$, there is an injective group homomorphism $\lambda_\varphi: G \to G'$
that makes $\varphi$ be $\lambda_\varphi$ -equivariant.
\item For $g \in G'$ , $\varphi \in \Phi$, we define $g \varphi: M \to M'$ by $(g \varphi)(x) = g \varphi(x)$ for $x \in M $.
If $(g \varphi)(M) \cap \varphi(M) \neq \emptyset$, then $g \in \lambda_\varphi (G)$.
\item For $\varphi \in \Phi$, we have $\Phi = \{g\varphi, g \in G' \}$.
\end{enumerate}
\begin{mydef}
Let $X$ be a paracompact Hausdorff space and let $\mathcal{U}$ be a covering
of $ X$ consisting of connected open subsets in the category $\mathcal{M}_s$. We assume that $\mathcal{U}$ satisfies the condition :
For any $x \in U \cap U'$, $(U, U' \in  \mathcal{U})$, there is
$U'' \in \mathcal{U}$ such that $x \in U'' \subset U \cap U'$.

Then an orbifold structure $\mathcal{V}$ on $X$ consists in the following data:
\begin{enumerate}
\item a ramified covering $\mathcal{V}(U ) =\{(G _U , \tilde{U} \xrightarrow{\tau}
U )$, for any $U \in \mathcal{U}$, giving an
identification $U \simeq \tilde{U}/G_U\}$.
\item a morphism $\varphi_{ V U} : (G_U , \tilde{U} )\to (G_V , \tilde{V})$, for any $U, V \in \mathcal{U}, U \subset V$, which
covers the inclusion $ U \subset V$ and satisfies $\varphi_{ W U }= \varphi_{ W V} \circ \varphi_{ V U}$ for any $U, V, W \in \mathcal{U}$, with $U \subset V \subset W $.
\end{enumerate}
\end{mydef}
If $\mathcal{U}'$ is a refinement of $\mathcal{U}$ satisfying the condition in the definition, then there is an orbifold structure
$\mathcal{V}$ such that $\mathcal{V}  \cup \mathcal{V}'$ is an orbifold structure. We consider $\mathcal{V}$ and $\mathcal{V}'$ to be equivalent.
Such an equivalence class is called an orbifold structure over $X$. So we may choose
$\mathcal{U}$ arbitrarily fine.
A space with an orbifold structure is called a complex orbifold.
Note that a complex orbifold is equivalent to a complex space with only quotient singularities.

Let $(X, \mathcal{V})$ be a complex orbifold. For each $x \in X$, we can choose a small neighbourhood $(G_x, \tilde{U}_x) \to U_x$ such that $x \in \tilde{U}_x$ is a fixed point of $G_x$ (such $G_x$ is unique
up to isomorphisms for each $x \in X$ called the local isotropy group). We denote by $|G_x |$ the
cardinal of $G_x $. If $|G_x | = 1$, then $x$ is a smooth point of $X$. If $|G_x | > 1$, then $x$
is a singular point of $X$. We denote by $X_{\sing} = \{x \in X; |G_x | > 1\}$ the singular
set of $X$.
Without loss of generality, we can assume that the action of $G_x$ is linear acting on some Euclidean open set $\tilde{U}_x$.
The proof of the uniqueness of the local isotropy group can be found for example in Theorem 2 of \cite{Pri67}.

The ``orbifold" smooth form on $U_x$ is defined to be $G_x-$invariant smooth form on $\tilde{U}_x$.
An ``orbifold" K\"ahler form on $X$ is defined to be a collection of smooth forms on $U_x$ which satisfies the compatibility condition (i.e. a  complex orbifold covered by objects in the category $\mathcal{M}_k$).
By the partition of unity, it is easy to see that the de Rham cohomology group defined by the ``orbifold" smooth forms is isomorphic to the singular cohomology of the orbifold following the analogous arguments in the manifold case.
In fact, it comes from the fact that the sheaf of ``orbifold" smooth forms is soft and that the sheaves of ``orbifold" smooth forms give a resolution of the locally constant sheaf.
(In particular, for any orbifold of dimension $n$, $H^i(X,\R)=0 (\forall i > 2n)$).

Notice that $(X, \mathcal{V})$ is also a (reduced) complex space (where the germs of the holomorphic function sheaf are the holomorphic functions on the ramified smooth cover which are invariant under the local isotropy group action).
Thus the smooth forms are defined on $X$ as follows (see e.g. \cite{Dem85}).
\begin{mydef}
Since the definition of smooth forms is local in nature, without loss of generality, we can assume that $X$ is contained in an open set $\Omega \subset \C^N$.
The space of $(p,q)$-forms on $X$ (or $d$-forms on $X$) is defined to be the image of the restriction morphism
$$\cC^\infty_{p,q}(\Omega) \to \cC^\infty_{p,q}(X_\reg)~~~(\mathrm{resp.} \;\cC^\infty_{d}(\Omega) \to \cC^\infty_{d}(X_\reg)).$$
The de Rham (or Dolbeault) cohomology is defined to be the hypercohomology of the complex of sheaves of smooth forms (or $\cC^\infty_{p,\bullet,X}$ for some $p$).
\end{mydef}
Since the pullback of a smooth form via the composition of the local quotient map and the local embedding is always a smooth form invariant under the local finite group action,
a smooth form on $X$ is always an ``orbifold" smooth form.
However, in general, an ``orbifold" smooth form is not necessarily a smooth form.
A typical case is the following easy example.
\begin{myex}
{\em 
Consider $X=\C^2/\langle \pm 1\rangle$ (which embeds in $\C^3$ by sending $(u,v)$ to $(z_1,z_2,z_3):=(u^2,v^2,uv)$),
the quotient of $\C^2$ under the action of $\Z/ 2 \Z$ by changing the sign.
An ``orbifold" smooth form is to consider smooth forms on $\C^2$ that are invariant under $\Z/ 2 \Z$ (e.g. $i \d \dbar (|u|^2+|v|^2)$). 
It corresponds to the restriction of a form on $\C^3$ with conic singularities along the divisor $\{z_1z_2=0\}$ (e.g. $i \d \dbar (|z_1|+|z_2|)$ in the above example).

Notice that this example provides also an easy example where $\pi_1(X_{\reg}) \neq \pi_1(X)$.
$X_{\reg}$ admits a double cover $\C^2\setminus \{0\}$ which is simply connected.
Thus $\pi_1(X_{\reg}) =\Z/2\Z$.
On the other hand, the map $[0,1] \times \C^2 \to \C^2:(t, (u,v)) \mapsto (tu,tv)$ induces a homotopy from $X$ to a point. Thus $X$ is simply connected.
}
\end{myex}
On the other hand, we have the following elementary lemma. We refer to the note \cite{Wu21} for more information on the comparison of different cohomologies over a singular space.
\begin{mylem}
Let $X$ be a compact complex orbifold which is also a K\"ahler complex space.
The image of a K\"ahler class under the natural morphism
$$H^2_{\dR}(X, \R) \to H^2(X, \R)$$
is an ``orbifold" K\"ahler class.

In particular, a K\"ahler complex space which is an orbifold is a K\"ahler orbifold in the category $\mathcal{M}_k$
\end{mylem}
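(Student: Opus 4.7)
My plan is as follows. The second sentence follows from the first by applying it to the K\"ahler class of any K\"ahler form on $X$ viewed as a complex space, so I concentrate on the first claim. Fix a locally finite cover of $X$ by orbifold charts $(G_\alpha, \tilde U_\alpha, \tau_\alpha)$, with $G_\alpha$ acting linearly and unitarily on $\tilde U_\alpha\subset\C^n$ (possible by uniqueness of the local isotropy group recalled above, after an averaged coordinate change). The pullbacks $\tau_\alpha^*\omega$ are smooth closed $(1,1)$-forms that agree under the transition morphisms $\varphi_{\alpha\beta}$, and hence glue to a global orbifold-smooth closed $(1,1)$-form $\tilde\omega$ on $X$. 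Since on $X_{\reg}$ the maps $\tau_\alpha$ are local biholomorphisms, $\tilde\omega$ and $\omega$ restrict to the same smooth form there; combined with the isomorphism $H^2(X,\R)\simeq H^2(X_{\reg},\R)$ for compact orbifolds noted earlier, this shows that the image of $[\omega]$ in $H^2(X,\R)$ coincides with the orbifold de Rham class of $\tilde\omega$. The form $\tilde\omega$ is however only semi-positive: it degenerates precisely where $\iota\circ\tau_\alpha:\tilde U_\alpha\to\C^{N_\alpha}$ fails to be an immersion, i.e., at the fixed loci of non-trivial elements of $G_\alpha$.

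My next step is to add a $\partial\bar\partial$-exact orbifold correction to attain strict positivity without altering the class. On each chart choose the $G_\alpha$-invariant smooth strictly psh function $\psi_\alpha(w):=\|w\|^2$; its descent $\bar\psi_\alpha$ on $U_\alpha$ is an orbifold-smooth strictly psh function whose pullback through any transition morphism is strictly psh. With an orbifold partition of unity $\{\chi_\alpha\}$ subordinate to $\{U_\alpha\}$, set $\Phi:=\sum_\alpha \chi_\alpha \bar\psi_\alpha$, which is orbifold-smooth. On a chart $\tilde U_\gamma$, the Leibniz expansion of $\tau_\gamma^*(i\d\dbar \Phi)$ decomposes as a diagonal convex-combination term $\sum_\alpha \tilde\chi_\alpha\, i\d\dbar \tilde\psi_\alpha$ (strictly positive with a uniform lower bound, since each summand is strictly psh and the weights sum to one), plus bounded mixed terms involving $\d\tilde\chi_\alpha \wedge \dbar\tilde\psi_\alpha$ and $\tilde\psi_\alpha\, i\d\dbar\tilde\chi_\alpha$. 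To dominate these mixed terms I plan to replace the naive sum by a Richberg-type regularized maximum of the $\bar\psi_\alpha$'s with suitably chosen additive shifts, producing an orbifold-smooth strictly psh $\Phi'$; then $\tilde\omega + i\d\dbar \Phi'$ gives the desired orbifold K\"ahler representative of the image of $[\omega]$.

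The main obstacle is this last positivity verification. Naive partition-of-unity sums of strictly psh functions are not psh because of Leibniz cross-terms, so the orbifold analogue of Richberg's regularized-maximum construction is required. The delicate point is to preserve $G_\alpha$-equivariance through the smoothing of the maximum and to coordinate the additive shifts coherently across charts so that on every chart a single shifted potential dominates; both are technically involved but can be carried out locally using that the $G_\alpha$ act unitarily. Once that is settled, the stated corollary follows at once by extracting an orbifold K\"ahler form in the class of any given complex-space K\"ahler form, placing $(X,\mathcal{V})$ in the category $\mathcal{M}_k$.
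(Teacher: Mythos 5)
Your first half matches the paper's strategy (pull the complex-space K\"ahler form back to the charts to get a semipositive orbifold form in the right class, then correct by an $i\d\dbar$-exact term built from invariant strictly psh functions), but the crucial positivity step has a genuine gap. You propose to produce, via a Richberg-type regularized maximum with additive shifts, a \emph{global} orbifold-smooth strictly psh function $\Phi'$ on $X$ and to take $\tilde\omega+i\d\dbar\Phi'$. On a compact (connected) complex orbifold no such $\Phi'$ can exist: a global psh orbifold function attains its maximum, and the maximum principle on a local ramified smooth cover forces it to be constant, so it cannot be strictly psh. Equivalently, the domination conditions needed to run the regularized-maximum gluing near every chart boundary cannot be coherently arranged on a compact space (if they could, the output would be exactly the impossible function). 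So the step you flag as ``technically involved but can be carried out'' is in fact the point where the argument breaks, and no choice of shifts or equivariant smoothing can repair it in the form you state.

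The paper avoids this by never asking the correction to be globally psh. It keeps $\omega$ (equivalently $\pi_\alpha^*\omega$, strictly positive off the degeneracy set $A=\ker D(i_\alpha\circ\pi_\alpha)$) and adds $\varepsilon\, i\d\dbar\bigl(\sum_\beta i_\beta^*\theta_\beta\,\psi_\beta\bigr)$ with a \emph{small} $\varepsilon$, where the cut-offs $\theta_\beta$ are chosen as smooth functions on the ambient spaces $\C^{N_\beta}$ restricted through the embeddings $i_\beta$, and $\psi_\beta=\sum_i|g_{i,\beta}|^2$ is the invariant strictly psh function. Then every Leibniz error term ($\psi_\beta\, i\d\dbar\,i_\beta^*\theta_\beta$ and the mixed $\d$--$\dbar$ terms) involves derivatives of $\pi_\alpha^*i_\beta^*\theta_\beta$ and hence vanishes on $A$, precisely where $\pi_\alpha^*\omega$ degenerates, while the good term $\sum_\beta i_\beta^*\theta_\beta\, i\d\dbar\psi_\beta$ is strictly positive on the unit sphere bundle; compactness of that bundle over $\overline{\tilde U_\alpha}$ then yields strict positivity for $\varepsilon$ small. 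This choice of cut-offs pulled back from the ambient embeddings is the missing idea in your plan. A secondary point: your assertion that the descended $\bar\psi_\alpha$ stays smooth and strictly psh when transported to a neighbouring chart is exactly what the paper's Proposition 1 (the charts $\tilde V_{\alpha,\beta}$, using purity of the branch locus) is there to justify; it should not be taken for granted.
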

To prepare ourselves for the partition of unity type of arguments, we need to study further the behaviour with respect to a change of coordinates charts.
First, we reduce to consider only changes with small group action.
\begin{mydef}
An element
$g \in GL_n (\C)$ is called a complex reflection if the invariant space $\dim_{\C}(\C^n )^g = n - 1$. A subgroup $G \subset GL_n (\C)$
is called a complex reflection group if $G$ is generated by complex reflections. A subgroup
$G \subset GL_n (\C)$ is called a small group if $G$ does not contain complex reflections.
\end{mydef}
Recall that we have the Chevalley-Shephard-Todd theorem \cite{Che}, \cite{ST}.

\it{The quotient algebraic
variety $\C^n /G$ is smooth if and only if $G$ is a complex reflection group. In such a case
$\C^n/ G \simeq \C^n$.
}

\rm{
Note that the subgroup generated by complex reflections is normal as a matrix conjugate
of a complex reflection is a complex reflection. Thus to study quotient varieties $\C^n /G$ for $G \subset GL_n (\C)$ one
may restrict attention to quotient singularities for small groups $G$.}
In particular, in this case, the quotient map $\C^n \to \C^n/G $ is \'etale in codimension 1 (or quasi-\'etale).
(In fact, the singular locus of $\C^n/G$ is the image of $\cup_{g \in G, g \neq e} (\C^n)^g$ under the quotient map.)

Let $(U_\alpha)_\alpha$ be a finite open cover of $X$ such that $U_\alpha \simeq \tilde{U}_\alpha/ G_\alpha$ where $\tilde{U}_\alpha$ are euclidean open sets and $G_\alpha \subset GL_n (\C)$ are small.
Without loss of generality, we can assume that $U_\alpha$ can be embedded as a closed analytic subset of some open set $V_\alpha$ of $\C^{N_\alpha}$ for certain $N_\alpha$.
Then $\omega|_{U_\alpha}$ is the pull back of some smooth form $\omega_\alpha$ defined on $V_\alpha \subset \C^{N_\alpha}$.
Now we choose a ``good" chart of $U_\alpha \cap U_\beta$. 
\begin{myprop}
There exists a smooth manifold $\tilde{V}_{\alpha, \beta}$ such that $U_\alpha \cap U_\beta \simeq \tilde{V}_{\alpha, \beta}/ (G_\alpha \times G_\beta)$ and a commuting diagram
$$\begin{tikzcd}
\tilde{U}_\alpha \arrow[d] & {\tilde{V}_{\alpha,\beta}} \arrow[d] \arrow[l] \arrow[r] & \tilde{U}_\beta \arrow[d] \\
U_\alpha                   & U_\alpha \cap U_\beta \arrow[l] \arrow[r]                & U_\beta                  
\end{tikzcd}$$
such that $\tilde{V}_{\alpha, \beta} \to \tilde{U}_\alpha$ is $G_\alpha-$equivariant and $G_\beta-$principal bundle over its image and similarly for $\tilde{V}_{\alpha, \beta} \to \tilde{U}_\beta$.
\end{myprop}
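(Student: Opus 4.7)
The plan is to construct $\tilde V_{\alpha,\beta}$ by assembling local refinements attached to common isotropy charts and then gluing via the $G_\alpha\times G_\beta$ action. By the covering condition built into the definition of orbifold structure, for each $x\in U_\alpha\cap U_\beta$ one may choose a chart $(G_W,\tilde W)\in\mathcal V$ with $x\in W\subset U_\alpha\cap U_\beta$, together with orbifold morphisms
$$\varphi_{\alpha W}:(G_W,\tilde W)\to(G_\alpha,\tilde U_\alpha),\qquad \varphi_{\beta W}:(G_W,\tilde W)\to(G_\beta,\tilde U_\beta).$$
These furnish equivariant open embeddings $\iota_\alpha:\tilde W\hookrightarrow\tilde U_\alpha$ and $\iota_\beta:\tilde W\hookrightarrow\tilde U_\beta$ with respect to injective group homomorphisms $\lambda_\alpha:G_W\to G_\alpha$, $\lambda_\beta:G_W\to G_\beta$.

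The local model is the balanced product
$$\tilde V_W:=(G_\alpha\times G_\beta)\times_{G_W}\tilde W,$$
namely the quotient of $G_\alpha\times G_\beta\times\tilde W$ by the $G_W$-action $h\cdot(g_\alpha,g_\beta,u)=(g_\alpha\lambda_\alpha(h)^{-1},g_\beta\lambda_\beta(h)^{-1},h\cdot u)$. Injectivity of $\lambda_\alpha$ makes the action free on the discrete factor, so $\tilde V_W$ is a smooth complex manifold of dimension $n$. It carries a canonical $G_\alpha\times G_\beta$ action by left multiplication on the first two factors, together with projections $\pi_\alpha:[g_\alpha,g_\beta,u]\mapsto g_\alpha\iota_\alpha(u)$ and $\pi_\beta:[g_\alpha,g_\beta,u]\mapsto g_\beta\iota_\beta(u)$, which are $G_\alpha$- respectively $G_\beta$-equivariant and cover the inclusions $W\hookrightarrow U_\alpha,U_\beta$. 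Morphism axiom (2) implies that any $g\in G_\alpha$ mapping $\iota_\alpha(\tilde W)$ into itself must lie in $\lambda_\alpha(G_W)$; using this one verifies that the fibre of $\pi_\alpha$ over $\iota_\alpha(u_0)$ is freely and transitively permuted by $G_\beta$, so $\pi_\alpha$ realises $\tilde V_W$ as a principal $G_\beta$-bundle onto its image $\tau_\alpha^{-1}(W)\subset\tilde U_\alpha$, and symmetrically for $\pi_\beta$.

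To obtain the global object, we glue the local pieces $\tilde V_W$ as $W$ ranges over a covering of $U_\alpha\cap U_\beta$ by such charts. For overlapping $W,W'$ and $x\in W\cap W'$, pick $W''\in\mathcal V$ with $x\in W''\subset W\cap W'$. The orbifold cocycle relations $\varphi_{\alpha W''}=\varphi_{\alpha W}\circ\varphi_{WW''}$ and $\varphi_{\beta W''}=\varphi_{\beta W}\circ\varphi_{WW''}$ produce canonical $(G_\alpha\times G_\beta)$-equivariant open embeddings $\tilde V_{W''}\hookrightarrow\tilde V_W|_{W''}$ and $\tilde V_{W''}\hookrightarrow\tilde V_{W'}|_{W''}$; patching along these yields the smooth manifold $\tilde V_{\alpha,\beta}$, and all the stated properties descend from the local model.

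The main obstacle lies in the coherence of the gluing: the local charts $(G_W,\tilde W)$ and the embeddings $\iota_\alpha,\iota_\beta$ are canonical only up to the $G_\alpha\times G_\beta$ action, so one has to verify that the identifications on triple overlaps satisfy a cocycle condition. This reduces to the axiom $\varphi_{WU}=\varphi_{WV}\circ\varphi_{VU}$ in the orbifold structure, but the bookkeeping of the various injections $\lambda_\alpha,\lambda_\beta$ is the most delicate part of the argument.
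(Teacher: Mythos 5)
Your local model is the right one and your pointwise verifications are sound: the balanced product $(G_\alpha\times G_\beta)\times_{G_W}\tilde W$ is smooth, the projection $[g_\alpha,g_\beta,u]\mapsto g_\alpha\iota_\alpha(u)$ is $G_\alpha$-equivariant, and axiom (2) for the family $\varphi_{\alpha W}$ does yield the free transitive $G_\beta$-action on its fibres. The genuine gap is exactly the step you defer at the end: the gluing. Your transition embeddings $\tilde V_{W''}\hookrightarrow\tilde V_W$ are defined through chosen representatives $\iota_\alpha\in\varphi_{\alpha W}$, $\iota_\beta\in\varphi_{\beta W}$, $\iota_{WW''}\in\varphi_{WW''}$, and changing these choices twists the embedding by right translations of $G_\alpha\times G_\beta$, which are not absorbed by the left action; so the coherence on triple overlaps, the Hausdorffness of the glued space, and the global identification $U_\alpha\cap U_\beta\simeq\tilde V_{\alpha,\beta}/(G_\alpha\times G_\beta)$ are all left unproved, and the appeal to the axiom $\varphi_{WU}=\varphi_{WV}\circ\varphi_{VU}$ alone does not settle this, since that axiom is an equality of families only up to the group actions. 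As written, the proof stops exactly where the real work of the global statement begins.

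The paper avoids the patching altogether by working with a globally defined object: it takes $\tilde V_{\alpha,\beta}=\{(z_\alpha,z_\beta)\in\tilde U_\alpha\times\tilde U_\beta:\ \pi_\alpha(z_\alpha)=\pi_\beta(z_\beta)\}$ with the induced $G_\alpha\times G_\beta$-action, notes $G_i$-equivariance, and upgrades $G_j$-unramifiedness from the complement of a codimension~$\geq 2$ set to the whole image by Zariski--Nagata purity (using normality of $\tilde V_{\alpha,\beta}$ and smoothness of $\tilde U_i$). The cleanest way to complete your argument is to anchor your local models to this global object rather than glue charts by hand: the canonical map $[g_\alpha,g_\beta,u]\mapsto(g_\alpha\iota_\alpha(u),g_\beta\iota_\beta(u))$ sends $\tilde V_W$ finitely and generically injectively onto the fibre product over $W$, and identifies $\tilde V_W$, independently of all choices, with the normalization of the fibre product over $W$; by uniqueness of normalization (equivalently, by uniqueness of maps commuting with both projections, which agree on the dense locus where the map to the fibre product is injective), the transition maps become canonical and the cocycle condition, Hausdorffness and the quotient identification are automatic. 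Note that this identification is genuinely needed: through a point of $U_\alpha\cap U_\beta$ with nontrivial isotropy the naive fibre product has several local branches (for $\C^2/\{\pm1\}$ it is the union of the graphs of $\pm\mathrm{id}$, crossing at the origin), so the smooth model of the Proposition is precisely the normalization, i.e.\ your balanced product; your local computation then supplies the smoothness and principal-bundle property at exactly the points where they are delicate.
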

\begin{proof}
Define
$$\tilde{V}_{\alpha, \beta}\coloneqq \{(z_\alpha,z_\beta) \in \tilde{U}_\alpha \times \tilde{U}_\beta, \pi_\alpha(z_\alpha)=\pi_\beta(z_\beta)\}$$
with natural induced $G_\alpha \times G_\beta-$action.
By Definition 1, $\tilde{V}_{\alpha, \beta}$ is smooth.
Note that $\tilde{V}_{\alpha, \beta} \to \tilde{U_i}$ is $G_i-$equivariant for $i=\alpha, \beta$.

On the other hand, $\tilde{V}_{\alpha, \beta} \to \tilde{U_i}$ is $G_j-$unramified over a Zariski open set outside an analytic set of codimension at least 2 for $\{i,j\}=\{\alpha, \beta \}$.
Since $\tilde{V}_{\alpha, \beta}$ is normal and $\tilde{U}_i$ is smooth,
by the purity of Zariski-Nagata (cf. e.g. P170 \cite{Fis76}),
$\tilde{V}_{\alpha, \beta} \to \tilde{U_i}$ is $G_j-$unramified onto the image.
\end{proof}
\begin{proof}(of Lemma 1)

Let $(\theta_\alpha)_\alpha$ be a partition of unity associated with this open cover in the above Proposition 1.
More precisely, $(\theta_\alpha)_\alpha$ are constructed as follows.
Let $U'_\alpha \subset \subset U_\alpha$ such that $U'_\alpha$ still cover $X$.
Let $\theta_\alpha$ be smooth function on $\C^{N_\alpha}$ such that $\theta_\alpha \equiv 1$ on $U'_\alpha$  and $\theta_\alpha$  is compactly supported in $V_\alpha$. 
Denote $i_\alpha: U_\alpha \to V_\alpha \subset \C^{N_\alpha}$ the closed immersion and $\pi_\alpha: \tilde{U}_\alpha \to U_\alpha$ the quotients.
Notice that $\sum_\alpha \theta_\alpha >0$ on $X$.

Let $(z_{i, \alpha})$ be the local coordinates of $\tilde{U}_\alpha$.
Consider the set $\{g_{i, \alpha}\}$ generated by $g.z_{i, \alpha}$ for all $g \in G_\alpha$ and $i$ which is thus $G_\alpha-$invariant.
In particular, 
$$\psi_\alpha \coloneqq \sum_i |g_{i, \alpha}|^2 $$
is invariant under $G_\alpha$ and descends to a function on $U_\alpha$.
Notice that $i \d \dbar \psi_\alpha$ is strictly positive on $\tilde{U}_\alpha$.
With the same notations in the above proposition, the pullback of
$\psi_\alpha$ defines a $G_\alpha-$invariant function on $\tilde{V}_{\alpha,\beta}$.
Since $\tilde{V}_{\alpha, \beta} \to \tilde{U}_\beta$ is $G_\alpha-$principal bundle over its image,
the pullback of
$\psi_\alpha$ descends to a smooth function on $\pi_\beta^{-1}(U_\alpha \cap U_\beta)$ for any $\beta$ under identification.
In particular, $ \psi_\alpha$  also defines a smooth function on $\pi_\beta^{-1}(U_\alpha \cap U_\beta)$ such that $i \d \dbar \psi_\alpha$ is strictly positive on $\pi_\beta^{-1}(U_\alpha \cap U_\beta)$.

We claim that for $\varepsilon >0$ small enough, 
$$\omega+\varepsilon i \d \dbar (\sum_\alpha i_\alpha^* \theta_\alpha \psi_\alpha)$$
defines an orbifold K\"ahler form over $X$ which finishes the proof.

Let $\omega$ be a K\"ahler form on $X$.
For fixed $\alpha$, fix a smooth Hermitian metric on $T_{\tilde{U_\alpha}}$.
$$\pi^*_\alpha[\omega+\varepsilon i \d \dbar (\sum_\beta i_\beta^* \theta_\beta \psi_\beta)]=\pi_\alpha^*[i_\alpha^* \omega_\alpha+\varepsilon \sum_\beta(\psi_\beta i \d \dbar i_\beta^* \theta_\beta
+i \d \psi_\beta \wedge \dbar i_\beta^* \theta_\beta 
-i \dbar \psi_\beta \wedge \d  i_\beta^* \theta_\beta
+i_\beta^* \theta_\beta i \d \dbar \psi_\beta )].$$
Define closed analytic set $A \coloneqq \{X \in T_z \tilde{U}_\alpha; D_z(i_\alpha \circ \pi_\alpha)(X)=0 \}$.
By the above proposition, under identification, on $T(\tilde{U_\alpha} \cap \pi_\alpha^{-1}(U_\alpha \cap U_\beta))$, 
$D(i_\beta \circ \pi_\beta)(A)=0$ for any $\beta$.
To prove that $\pi^*_\alpha[\omega+\varepsilon i \d \dbar (\sum_\beta i_\beta^* \theta_\beta \psi_\beta)]$
is a K\"ahler form over $\tilde{U}_\alpha$, 
 it is enough to show that 
for $\varepsilon >0$ small enough, 
$F_\alpha(X)=\pi^*_\alpha[\omega+\varepsilon i \d \dbar (\sum_\beta i_\beta^* \theta_\beta \psi_\beta)](X,X)$
defines a strictly positive function for $X$ in the unit sphere bundle over $\tilde{U}_\alpha$ (with respect to the local Euclidean metric or any orbifold smooth metric on the orbifold tangent bundle). 
Since $\omega$ is a K\"ahler form,
$\pi^*_\alpha \omega (X,X)$ is strictly positive outside $A$ and positive over the unit sphere bundle.
Since $\sum_\alpha \theta_\alpha >0$ on $X$ and $i \d \dbar \psi_\beta$ is strictly positive on $\pi_\alpha^{-1}(U_\alpha \cap U_\beta)$,
$$\pi^*_\alpha (\sum_\beta i_\beta^* \theta_\beta i \d \dbar \psi_\beta)(X,X)$$
is strictly positive on the unit sphere bundle.
On the other hand,
$$\pi^*_\alpha (\sum_\beta \psi_\beta i \d \dbar i_\beta^* \theta_\beta
+i \d \psi_\beta \wedge \dbar i_\beta^* \theta_\beta 
-i \dbar \psi_\beta \wedge \d  i_\beta^* \theta_\beta)$$
vanishes on $A$ since all terms involve the derivatives of $\pi^*_\alpha i_\beta^* \theta_\beta$.
Without loss of generality, we can assume that all forms are defined over the closure of $\tilde{U}_\alpha$.
Since the unit sphere bundle in $T_{\overline{\tilde{U}_\alpha}}$ is compact,
for $\varepsilon >0$ small enough,
$F_\alpha$ is strictly positive. 
\end{proof}
Now we give the definition of the first Chern class of a coherent sheaf (not necessarily torsion-free) over a compact complex orbifold.
To do it, we need a more cohomological study of compact complex orbifold.
\begin{mylem} {\rm(analogue of lemma 11.13 in  \cite{Voi})}

Let $X$ be a complex orbifold (not necessarily compact) of dimension $n$ and $Y$ be a closed analytic subset of codimension at least $r+1$.
Then the restriction map
$$H^l(X, \R) \to H^l(X \setminus Y,\R)$$
is an isomorphism for $l \leq 2r$.
The same holds if changing $\R$ by $\Q$.
\end{mylem}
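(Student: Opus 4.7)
The plan is to reduce the statement to a vanishing of cohomology with supports. The long exact sequence of the pair $(X, X\setminus Y)$,
\begin{equation*}
\cdots \to H^l_Y(X, \R) \to H^l(X, \R) \to H^l(X \setminus Y, \R) \to H^{l+1}_Y(X, \R) \to \cdots,
\end{equation*}
shows that the restriction map is an isomorphism in the range $l \leq 2r$ as soon as $H^l_Y(X, \R) = 0$ for $l \leq 2r+1$. Via the hypercohomology spectral sequence $E_2^{p,q} = H^p(X, \mathcal{H}^q_Y(\R_X)) \Rightarrow H^{p+q}_Y(X, \R)$, this vanishing reduces further to showing that the local cohomology sheaves $\mathcal{H}^q_Y(\R_X)$ vanish for $q \leq 2r+1$, which is a statement about germs at points $x \in Y$.

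To compute the stalk at $x \in Y$, I would pass to an orbifold chart $U = \tilde U/G$ around $x$ with $\tilde U \subset \C^n$ a connected open neighborhood of the origin and $G \subset \mathrm{GL}_n(\C)$ a finite small subgroup fixing the origin, and set $\tilde Y \coloneqq \pi^{-1}(Y)$ for the quotient map $\pi\colon \tilde U \to U$. Since $\pi$ is finite, $\tilde Y$ is a $G$-invariant closed analytic subset of $\tilde U$ of complex codimension at least $r+1$. The classical manifold version (Voisin, Lemma 11.13), applied on a cofinal family of $G$-invariant open neighborhoods $\tilde V$ of the origin in $\tilde U$, yields $G$-equivariant isomorphisms $H^l(\tilde V, \R) \cong H^l(\tilde V \setminus \tilde Y, \R)$ for $l \leq 2r$.

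To descend from $\tilde V$ to $V \coloneqq \tilde V/G$, I would invoke the standard fact that for a finite quotient with $|G|$ invertible in the coefficient ring (in particular, for $\R$ or $\Q$), the Leray spectral sequence for $\pi$ degenerates and $(\pi_\ast \R)^G = \R_V$, giving canonical identifications $H^\ast(V, \R) \cong H^\ast(\tilde V, \R)^G$ and similarly $H^\ast(V \setminus Y, \R) \cong H^\ast(\tilde V \setminus \tilde Y, \R)^G$. Since taking $G$-invariants is exact on $\R[G]$-modules, the equivariant isomorphism passes to invariants and yields $H^l(V, \R) \cong H^l(V \setminus Y, \R)$ for $l \leq 2r$, which is exactly the required local vanishing of $\mathcal{H}^q_Y(\R_X)$ for $q \leq 2r+1$. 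The same argument works verbatim with $\Q$-coefficients.

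The main obstacle is precisely this orbifold-to-manifold comparison, which rests on the triviality of the higher direct images $R^q \pi_\ast \R$ for $q \geq 1$ and on exactness of $G$-invariants; this is what makes the characteristic-zero hypothesis on the coefficients indispensable, and a mod-$p$ version with $p$ dividing some $|G_x|$ would fail. Once this comparison is granted, the remainder is a formal diagram chase on the long exact sequence of cohomology with supports, together with the manifold case applied in each chart.
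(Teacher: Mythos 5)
Your argument is correct in substance, but it follows a genuinely different route from the paper. The paper's proof is global: since a complex orbifold is a $\Q$-homology manifold, Poincar\'e--Verdier duality identifies the restriction map $H^l(X,\R)\to H^l(X\setminus Y,\R)$ with the dual of the natural map $H^{2n-l}_c(X\setminus Y,\R)\to H^{2n-l}_c(X,\R)$, and the long exact sequence for compactly supported cohomology reduces everything to the vanishing $H^m_c(Y,\R)=0$ for $m\geq 2n-2r-1$, which is immediate because $\dim_{\R} Y\leq 2n-2r-2$. You instead argue locally: the exact sequence of cohomology with supports together with the spectral sequence $H^p(X,\mathcal{H}^q_Y(\R_X))\Rightarrow H^{p+q}_Y(X,\R)$ reduces the claim to the vanishing of the local cohomology sheaves, which you compute on an orbifold chart by pulling back to $\tilde V$, applying the manifold case of Voisin's lemma there, and descending through $H^\bullet(\tilde V/G,\Q)\cong H^\bullet(\tilde V,\Q)^G$ (finiteness of $\pi$ plus exactness of $G$-invariants in characteristic zero). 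Both proofs use the orbifold hypothesis essentially --- yours via the local finite-quotient structure, the paper's via duality, which fails on general singular spaces (compare the cone example in the paper) --- and both correctly explain why integral coefficients must be excluded. The paper's route is shorter; yours avoids Verdier duality and makes the purely local nature of the statement explicit.

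One bookkeeping point should be repaired. An isomorphism $H^l(V,\R)\cong H^l(V\setminus Y,\R)$ for $l\leq 2r$ yields $H^q_{Y\cap V}(V,\R)=0$ only for $q\leq 2r$; the vanishing in degree $2r+1$, which you do need (the term $E_2^{0,2r+1}$ contributes to $H^{2r+1}_Y(X,\R)$ and hence to surjectivity of the restriction in degree $2r$), additionally requires injectivity of $H^{2r+1}(V,\R)\to H^{2r+1}(V\setminus Y,\R)$. This is harmless to arrange: either invoke the sharper form of the manifold lemma, which also gives injectivity one degree beyond the isomorphism range, or choose the cofinal family with $\tilde V$ a $G$-invariant ball, so that $H^{2r+1}(V,\R)=H^{2r+1}(\tilde V,\R)^G=0$ and injectivity is automatic. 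With that adjustment the local vanishing, and hence your proof, is complete.
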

\begin{proof}
In the following, we only show the real case.
The proof of the rational case is identical.
 Notice that $X \setminus Y$ is also a complex orbifold.
 By Poincaré-Verdier duality, it suffices to prove that
 the inclusion map
 $$H^l_c(X \setminus Y, \R) \to H^l_c(X ,\R)$$
is an isomorphism for $l \geq 2n-2r$.
We have the long exact sequence for cohomology with compact supports
$$\cdots H^{l}_c( X \setminus Y, \R) \to H^l_c(X, \R) \to H^l_c(Y\, \R) \to H^{l+1}_c( X\setminus Y, \R)\cdots .$$
It reduces to show that
for $l \geq 2n-2r$,
$H^l_c(Y, \R)=0$ which is true by dimension condition.
\end{proof}
In particular, if $X$ is a (compact) complex orbifold, $X_\sing$ is a closed analytic of codimension at least 2, which implies that
$$H^2(X,\R) \simeq H^2(X_\reg,\R) (\mathrm{resp.} \; H^2(X,\Q) \simeq H^2(X_\reg,\Q)).$$
Notice that the analogue in integral coefficients is in general false. For example, consider $X= \C^2/\langle \pm 1 \rangle$ which is contractible and hence has trivial cohomology other than degree 0 case.
However, its regular part is homotopic to $S^3/ \langle \pm 1 \rangle \simeq \R\P^2$ which has non-trivial torsion cohomology groups.
\begin{mydef}
Let $\cF$ be a coherent sheaf over a compact complex orbifold $X$.
We define the first Chern class of $\cF$ to be the preimage of $c_1(\cF|_{X_{\reg}})$ in the above isomorphism.
The construction of the Chern class of a coherent sheaf over the non-compact manifold $X_{\reg}$ is recalled in the following Remark 1.
\end{mydef}
Let us recall some results on coherent real analytic space to define the Chern classes of coherent sheaves over open manifolds.
First, we recall the following characterisation when the underlying real analytic space of a complex space is coherent. 
\begin{myprop}(Chap II. Proposition 2.15 \cite{BMT86})

Let $X$ be a (reduced) complex analytic space and
$X^\R$ be its underlying (reduced) real analytic space. 
$X^\R$
is coherent at point $a$ if and only if the irreducible components of $X_a$ remain
irreducible in a neighbourhood of $a$.
\end{myprop}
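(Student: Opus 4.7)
The plan is to invoke the standard characterization of coherence for real analytic germs (Galligo--Narasimhan--Hironaka style): $X^\R$ is coherent at $a$ if and only if there is a neighbourhood $U$ of $a$ such that every real analytic irreducible component germ $(X^\R, b)_j$, for $b \in U$, arises by restriction from some real analytic irreducible component germ $(X^\R, a)_i$ extended as an irreducible real analytic set over all of $U$. With this characterization, the proposition reduces to a dictionary matching complex-analytic and real-analytic irreducible components of the underlying set, so that this stability becomes equivalent to the complex-analytic one.

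First I would localize: embed a neighbourhood of $a$ in $\C^N$ so that $X^\R$ sits as a closed real analytic subset of $\R^{2N}$. Then I would establish the \emph{bridge lemma}: for any reduced complex analytic germ $(V,p)$, the complex irreducible decomposition of $(V,p)$ and the real analytic irreducible decomposition of $(V^\R,p)$ share the same indexing set, given by the connected components of $V_{\reg}$ in a punctured neighbourhood of $p$. The two ingredients are (a) the equality $V_{\sing}^{\C}=V_{\sing}^{\R}$, which follows from Cauchy--Riemann and the implicit function theorem applied in both categories, and (b) the equivalence, in either category, between irreducibility of a germ at $p$ and connectedness of its regular locus near $p$.

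Second, I would apply the bridge uniformly in a neighbourhood of $a$. Write the complex irreducible decomposition $X_a=\bigcup_i Z_i$ and extend each $Z_i$ as a complex analytic set over a neighbourhood of $a$. If the hypothesis holds, i.e.\ each $Z_i$ remains complex irreducible at every nearby $b$, the bridge lemma gives that each germ $(Z_i^\R,b)$ is real irreducible and $\bigcup_i Z_i^\R$ realises the real analytic irreducible decomposition of $X^\R$ everywhere in the neighbourhood; this is precisely the hypothesis in the Galligo--Narasimhan--Hironaka criterion, so $X^\R$ is coherent at $a$. Conversely, if some $Z_i$ splits at $b$ into strictly more complex irreducible components, the bridge exhibits a strict refinement of the real analytic decomposition at $b$ compared to that at $a$, and coherence must fail.

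The main obstacle is the bridge lemma in the direction ``complex irreducible implies real analytically irreducible''. A priori one might worry that a complex irreducible germ could admit a nontrivial real analytic splitting along some real analytic subvariety invisible to the complex structure. Ruling this out rests on identifying the regular loci and exploiting that, under complex irreducibility, $V_{\reg}$ is a connected complex manifold near $p$, hence a connected real analytic manifold; once $V_{\sing}^{\C}=V_{\sing}^{\R}$ is established, real analytic irreducibility follows from this connectedness. Once the bridge is in place, the remaining argument is essentially bookkeeping between the two categories of irreducible decompositions together with a direct application of the real analytic coherence criterion.
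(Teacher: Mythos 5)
The paper itself gives no proof of this proposition: it is quoted verbatim from Chap.\ II, Prop.\ 2.15 of \cite{BMT86}. Your argument therefore has to stand on its own, and it does not, because the decisive first step is the coherence criterion you invoke, and that criterion is not a theorem. There is no ``Galligo--Narasimhan--Hironaka'' characterization of coherence of a real analytic set purely in terms of set-theoretic stability of the irreducible components of nearby germs, and the statement you formulate is false as stated. Take the Whitney umbrella $A=\{x^2=zy^2\}\subset\R^3$: the germ $A_0$ is irreducible (the ideal $(x^2-zy^2)$ is prime in $\R\{x,y,z\}$), and for every nearby point $b$, including points of the handle $\{x=y=0,\ z<0\}$, the germ $A_b$ is irreducible and is precisely the germ at $b$ of the representative extending $A_0$; so your condition is satisfied, yet $A$ is the standard example of a non-coherent real analytic set (any finite system of generators of $I(A)_0$ lies in $(x^2-zy^2)$ and cannot generate $I(A)_b=(x,y)$ along the handle). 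Coherence is a statement about the ideal sheaf, sensitive to exactly the phenomena (dimension drops, jumping of complexifications) that a purely set-theoretic component-matching condition cannot detect. Restricting your criterion to underlying sets of complex spaces does not repair this: in that generality it is essentially the proposition to be proved, so the argument becomes circular.

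What a correct proof uses instead is Cartan's criterion: $X^\R$ is coherent at $a$ if and only if a complexification of a representative of the germ $X^\R_a$ induces, at every nearby point $b$, the complexification of the germ $X^\R_b$. Embedding locally $X\subset\C^N$ and complexifying inside $\C^{2N}$ via $z\mapsto(z,\bar z)$, one shows that if $X_a=\bigcup_i Z_i$ is the decomposition into complex irreducible components, then the complexification of $X^\R_a$ is $\bigcup_i Z_i\times\overline{Z_i}$ (not $X_a\times\overline{X_a}$), because the complexification of an irreducible real germ is the smallest complex germ containing it and a complex-irreducible germ is real-analytically irreducible. If some $Z_i$ splits at a nearby point $b$ into components $Z_{i,k}$, the germ of $Z_i\times\overline{Z_i}$ at $(b,\bar b)$ contains the cross terms $Z_{i,k}\times\overline{Z_{i,l}}$ with $k\neq l$, which do not lie in the complexification $\bigcup_j W_j\times\overline{W_j}$ of $X^\R_b$; this mismatch is exactly the failure of coherence, and its absence when every $Z_i$ stays irreducible gives the other implication. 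Your ``bridge lemma'' (complex components coincide with real components) is true, but it is the easy dictionary, and it is better obtained from the complexification identity above than from comparing regular loci: real-analytic irreducibility is not governed by connectedness of the regular locus (the cone $x^2+y^2=z^2$ and the umbrella already show the two notions interact differently than in the complex category), and your claim $V^{\C}_{\mathrm{sing}}=V^{\R}_{\mathrm{sing}}$ needs the nontrivial fact that a complex analytic set which is a real-analytic submanifold is a complex submanifold, not just the implicit function theorem. The substance of the proposition lives in the complexification analysis, which your proposal never enters.
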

In particular, the underlying space of a locally irreducible complex space is coherent.
The advantage of coherent real analytic spaces is that they are similar to Stein spaces.
For example, we have the following real analytic analogue of Cartan theorems A and B.
\begin{mythm}((Chap II. Theorem 3.7 \cite{BMT86}))
 
Let $(X, \cO_X^{\R-\an})$ be a reduced coherent real analytic space and $\cF$ be a coherent $\cO_X^{\R-\an}-$sheaf.

(1) For any $x \in X$, $\cF_x$ is generated as $\cO_{X,x}^{\R-\an}-$module by $H^0(X, \cF)$.

(2) $H^q(X, \cF)=0$ for any $q >0$.  
\end{mythm}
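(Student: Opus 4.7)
The plan is to reduce the real analytic statement to the classical Cartan Theorems A and B on a complex Stein space, via complexification. First I would use the coherence hypothesis together with Proposition 2 to secure a complexification: near each point $a \in X$ the germ embeds in a complex analytic germ $(\tilde X_a,a)$ equipped with an antiholomorphic involution $\sigma$ whose fixed locus is the germ $X_a$; under the coherence assumption these local complexifications glue to a complex analytic space $\tilde X$ containing $X$ as the fixed point set of a global $\sigma$.

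Next I would invoke the theorem of Grauert that $X$ admits a fundamental system of $\sigma$-invariant open Stein neighborhoods $\Omega_n \subset \tilde X$ with $\Omega_{n+1} \subset \subset \Omega_n$ and $\bigcap_n \Omega_n = X$. Simultaneously I would complexify the sheaf: since $\cF$ is locally presented as the cokernel of a matrix of real analytic functions, these entries extend to holomorphic functions on a suitable $\Omega_n$, producing coherent $\cO_{\tilde X}$-sheaves $\tilde\cF_n$ on $\Omega_n$. After shrinking inductively I would arrange the compatibility $\tilde\cF_{n+1}|_{\Omega_n} \simeq \tilde\cF_n$ and $\tilde\cF_n|_X \simeq \cF$.

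With this setup I would apply the classical Cartan A and B to each Stein space $\Omega_n$: this yields $H^q(\Omega_n,\tilde\cF_n)=0$ for $q>0$ and global generation of every stalk by $H^0(\Omega_n,\tilde\cF_n)$. To descend to $X$, I would use that, since $X$ is paracompact and the $\Omega_n$ form a cofinal neighborhood basis, one has $H^q(X,\cF) \simeq \varinjlim_n H^q(\Omega_n,\tilde\cF_n)$, which gives (2); global generation of $\cF_x$ then follows from the fact that any element of $\tilde\cF_{n,x}$ can be represented as an $\cO_{\tilde X,x}$-combination of global sections of $\tilde\cF_n$, whose restrictions to $X$ lie in $H^0(X,\cF)$, yielding (1).

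The main obstacle will be the coherent gluing of the complexifications $\tilde\cF_n$ on the Stein neighborhoods: while local complexification of $\cF$ by extending presentation matrices is formal, patching these extensions compatibly on a shrinking system of Stein neighborhoods requires an inductive construction that uses Cartan B on $\Omega_{n+1}$ itself to kill the obstructions to gluing local extensions, together with the uniqueness of a coherent extension up to a further shrinking. Once this global complexification is in hand, the rest is a formal consequence of the Stein case and the direct limit computation of cohomology on $X$.
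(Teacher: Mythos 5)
The paper does not prove this statement but quotes it from \cite{BMT86}, and your proposed argument — complexify the coherent real analytic space, take a fundamental system of Stein neighbourhoods in the complexification (Grauert), extend $\cF$ to coherent sheaves on these neighbourhoods, apply the classical Cartan Theorems A and B there, and pass to the (co)limit over neighbourhoods of $X$ — is essentially the standard proof given in that reference, so your approach matches the intended one. Only minor points would need polishing (coherence of $X$, not Proposition 2, is what yields the complexification; the nested neighbourhoods cannot be relatively compact when $X$ is non-compact; and the extension of $\cF\otimes_{\R}\C$ from the closed set $X$ to a neighbourhood follows from the standard extension lemma for coherent sheaves on closed subsets, which avoids the gluing obstruction you flag), none of which affects the soundness of the outline.
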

\begin{myrem}
{\em 
If $X$ is a compact complex manifold of complex dimension $n$, by the above version of Cartan Theorem A, there exists a surjective morphism
$\cO_X^{\R-\an, N_0}\coloneqq E_0 \to \cF \otimes_{\cO_X} \cO_X^{\R-\an} $ for some $N_0 \in \N$.
The kernel is a coherent $\cO_X^{\R-\an}-$sheaf to which we have a surjective morphism from some  $\cO_X^{\R-\an, N_1}\coloneqq E_1$.
Let us continue this process and consider the kernel of $E_1 \to E_0$.
By Hilbert's syzygy theorem, the kernel of $E_{2n-1}\to E_{2n-2}$ has to be locally free, and we denote it by $E_{2n}$.
In conclusion, $\cF\otimes_{\cO_X} \cO_X^{\R-\an}$ admits a resolution of real analytic vector bundles $E_\bullet$ over $X$ (in fact over any compact coherent real analytic manifold).

If the complex space $X$ is not compact (e.g. $X$ is the regular part of some compact complex orbifold), 
we can define the Chern classes of a coherent sheaf in the following way.
Assume that $X$ is some open set of a compact irreducible complex space $Y$ which is smooth. 
The space $Y$ can be covered by a finite closed analytic subset of some euclidean open set.
The same holds for the open set $X$.
Thus it is second countable since any closed analytic subset of some euclidean open set is second countable.
It is paracompact since it is locally compact and Hausdorff.
Let $(X_\nu)$ be an increasing sequence of open subsets of $X$ such that $X= \bigcup_\nu X_\nu$ and $X_\nu$ is relatively compact in $X_{\nu+1}$.
Then by the above version of Cartan Theorem A, there exist finite sections of $H^0(X_{\nu+1}, \cF)$ which generate $\cF_x \otimes_{\cO_{X,x}} \cO_{X,x}^{\R-\an}$ at any point $x \in X_\nu$.
Continue the process of the construction of the resolution of the coherent sheaf $\cF \otimes_{\cO_X} \cO_X^{\R-\an}|_{X_\nu}$ as above.
The process will terminate in a finite number of steps and produce in this way a resolution $E^\bullet_\nu$ of $\cF\otimes_{\cO_X} \cO_X^{\R-\an} |_{X_\nu}$.
As above define
$$\ch(\cF|_{X_\nu}) \coloneqq \sum_i (-1)^i \ch(E^i_\nu).$$
On the other hand, the singular cohomology group $H^\bullet(X, \R)$
satisfies
$$H^\bullet(X, \R)=\varprojlim_\nu H^\bullet(X_\nu, \R).$$
(The reason is as follows due to David E. Speyer. Since the image of any singular chain is compact, it is contained in some $X_\nu$ for $\nu$ sufficiently large.
Thus the singular chain complex satisfies
$$C_\bullet(X, \R)=\varinjlim_\nu C_\bullet(X_\nu, \R).$$
Since direct limit functor is exact, we have the isomorphism in homology.
By universal coefficients theorem, for any $\nu$,
$H^\bullet(X_\nu, \R)\simeq \mathrm{Hom}(H_\bullet(X_\nu,\R),\R)$.
The inverse limit system $(H^\bullet(X_\nu, \R))_\nu$ satisfies the Mittag-Leffler condition, that is for any $\mu > \nu$, $H^\bullet(X_\mu, \R) \to H^\bullet(X_\nu, \R)$ has finite dimensional image and thus we have isomorphisms in cohomologies.
Notice that that $\mu > \nu$, $H^\bullet(X_\mu, \R) \to H^\bullet(X_\nu, \R)$ factorises through $H^\bullet(\overline{ X}_\nu, \R)$ which is finite dimensional.) 

Since for any $\nu$, the definition of $\ch(\cF|_{X_\nu})$ is independent of the choice of the resolution,
$(\ch(\cF|_{X_\nu}))_\nu$ defines an element in $\varprojlim_\nu H^\bullet(X_\nu, \R)$. 
We define this element as the Chern characteristic class of $\cF$ and denote it by $\ch(\cF)$.
The same arguments work in the rational coefficients case.
}
\end{myrem}
Notice that the orbifold assumption in the Lemma 2 is necessary as shown by the following example.
\begin{myex}
{\em
(Cohomology of cone)

Let $X$ be a projective manifold and $A$ be a very ample line bundle over $X$.
We denote the total space of the dual line bundle as $A^{-1}$.
The affine cone $C(X)$ of $X$ with respect to $A$ is defined to be $\Spec (\oplus_{i \geq 0} H^0(X, A^i))$.
It has only one singular point at the vertex.
The blow-up of the vertex as a closed point gives the resolution of singularities of $C(X)$ which is isomorphic to $\pi:A^{-1} \to C(X)$.
The Stein factorisation of $\pi$ shows that $C(X)$ is normal.
For any $i>0$,
$R^i \pi_* (\cO_{A^{-1}})=(H^i(A^{-1},\cO_{A^{-1}}))^{\tilde{}}$ since $C(X)$ is affine.
Since the natural map $f: A^{-1} \to X$ is affine, it is also equal to
$$(H^i(X,f_* \cO_{A^{-1}}))^{\tilde{}} \simeq (\oplus_{j \geq 0} H^i(X,
A^j))^{\tilde{}}.$$
In particular, if $h^{1,0}(X) \neq 0$, $C(X)$ is not of rational singularity which is in particular not a complex orbifold.
Notice that by \cite{Vie77}, quotient singularity is always of rational singularity.

Let $PC(X)$ be the projective cone over $X$ with respect to $A$ which contains $C(X)$ as a Zariski open set.
The regular part of $PC(X)$ is homeomorphic to a rank 2 real vector bundle over $X$ which implies that
$$H^2(PC(X)_{\reg}, \R) \simeq H^2(X, \R).$$
Since $X \times \{0\}$ is a deformation retract of some neighborhood in $X \times \P^1$, by Chap. 2 Theorem 2.13 of \cite{Hat},
we have the following exact sequence for any $l>0$,
$$\cdots H^{l-1}(X \times \{0\}, \R) \to H^l(X \times \P^1/X \times \{0\}, \R) \to H^l(X \times \P^1, \R) \to H^{l}(X \times \{0\}, \R)\cdots .$$
Notice that $X \times \P^1/X \times \{0\}$ is homeomorphic to $PC(X)$.
By Kunneth formula and chasing the diagram,
we have that
$H^2(PC(X), \R) \simeq \R$.

Let $X$ be an abelian variety of dimension at least 2, the above calculations show that the second cohomology of $PC(X)$ and its regular part have different dimensions.
In particular, the restriction map does not induce an isomorphism on cohomologies.

(One can also show that $H^4(PC(X),\R) \simeq H^2(X, \R) $. 
Let $X$ be an abelian surface.
Then $PC(X)$ gives an example of singular variety not satisfying the Poincaré duality.
)
}
\end{myex}
In a bit more general case, we can define the homology first Chern class as follows.
\begin{myprop}
Let $\cF$ be a coherent sheaf over $X$ a compact irreducible complex space smooth in codimension 1.
Let $Z$ be the singular part of $X$.
Then as above, we can define the homology first Chern class of $\cF$ via the isomorphisms
$$H_{2n-2}(X, \R) \simeq H^{2n-2}(X , \R)^* \simeq H_c^{2n-2}(X \setminus Z, \R)^* \simeq H^{2}(X \setminus Z, \R).$$
If $\cF$ is torsion and $\omega$ is a K\"ahler form on $X$, then we have
$$\langle c_1^h(\cF), \omega^{n-1} \rangle \geq 0$$
where $c_1^h(\cF)$ denotes the homology first Chern class.
\end{myprop}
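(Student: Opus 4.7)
The plan is to first verify the three asserted isomorphisms, transport the construction of $c_1$ from Remark 1 to obtain $c_1^h(\cF)$, and then establish the positivity for torsion sheaves by reducing to a sum over divisorial components of the support.

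For the isomorphisms: the first, $H_{2n-2}(X,\R) \simeq H^{2n-2}(X,\R)^*$, is the universal coefficient theorem over the field $\R$. The second follows exactly as in Lemma 2: from the long exact sequence of compactly supported cohomology
$$\cdots \to H^{l-1}_c(Z,\R) \to H^l_c(X \setminus Z, \R) \to H^l_c(X,\R) \to H^l_c(Z,\R) \to \cdots,$$
together with $H^l_c(X,\R) = H^l(X,\R)$ (since $X$ is compact) and $H^l_c(Z,\R) = 0$ for $l > 2\dim_\R Z$, applied at $l = 2n-2$ where $\dim_\R Z \leq 2n-4$ by the codimension-one smoothness hypothesis. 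The third is Poincaré duality for the oriented real $2n$-dimensional manifold $X \setminus Z$. Composing these three identifications yields the claimed chain, and we define $c_1^h(\cF)$ to be the image of $c_1(\cF|_{X \setminus Z}) \in H^2(X \setminus Z, \R)$, where the latter class is constructed via Remark 1 applied to the open complex manifold $X_{\reg} = X \setminus Z$.

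For the inequality when $\cF$ is torsion: the support of $\cF|_{X_{\reg}}$ is a proper analytic subset of $X_{\reg}$. If every irreducible component has codimension $\geq 2$, local free resolutions of length $\geq 2$ show that $c_1(\cF|_{X_{\reg}}) = 0$ and the claim is trivial. Otherwise, let $D_1, \ldots, D_k$ denote the codimension-one irreducible components. At the generic point $\eta_i$ of $D_i$ the stalk $\cF_{\eta_i}$ is a finite-length module of length $m_i > 0$ over the discrete valuation ring $\cO_{X_{\reg},\eta_i}$, and the standard identification of $c_1$ with the determinant line bundle gives
$$c_1(\cF|_{X_{\reg}}) = \sum_{i} m_i\, [D_i \cap X_{\reg}] \quad \text{in } H^2(X_{\reg},\R).$$
Under the duality chain this transports to $\sum_i m_i\, [\overline{D_i}] \in H_{2n-2}(X,\R)$, where $\overline{D_i}$ is the closure of $D_i$ in $X$. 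Pairing with $[\omega^{n-1}]$ yields
$$\langle c_1^h(\cF), \omega^{n-1} \rangle = \sum_{i} m_i \int_{\overline{D_i}} \omega^{n-1} \geq 0,$$
each integral being the mass of the positive integration current associated to the irreducible analytic subvariety $\overline{D_i}$.

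The main obstacle is verifying that the real-analytic construction of $c_1$ from Remark 1 actually recovers the divisorial class $\sum_i m_i [D_i]$ for a torsion sheaf. This reduces to a local calculation near a generic point of each $D_i$ in $X_{\reg}$, where $\cF$ admits a free resolution of length one over the DVR and the Chern character of the resolution computes $c_1(\det \cF) = c_1(\cO(\sum_i m_i D_i))$; since the Chern class of a holomorphic line bundle as a de Rham or singular cohomology class is unaffected by passage from $\cO_X$ to $\cO_X^{\R-\an}$, the two constructions agree.
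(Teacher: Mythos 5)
Your outline of the isomorphism chain and your strategy for the positivity (identify $c_1(\cF|_{X_{\reg}})$ with an effective divisor class coming from the determinant of the torsion sheaf, pass to closures, and integrate $\omega^{n-1}$) parallels the paper's proof. But there is a genuine gap at the single step you dispose of in one sentence: ``under the duality chain this transports to $\sum_i m_i[\overline{D_i}]$'' and ``pairing with $[\omega^{n-1}]$ yields $\sum_i m_i\int_{\overline{D_i}}\omega^{n-1}$.'' The class $c_1^h(\cF)$ is defined as an element of $H^{2n-2}(X,\R)^*$ only through its restriction to $H^{2n-2}_c(X\setminus Z,\R)$, where it acts by integration over $D\cap X_{\reg}$ of \emph{compactly supported} forms. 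The class $\omega^{n-1}$ is not compactly supported in $X\setminus Z$, and moreover it is a smooth form on the \emph{singular} space $X$, so it is not even automatic that it determines a singular cohomology class on $X$ against which the pairing can be computed by integration: on a singular space the de Rham comparison is not free of charge. So knowing the functional on the compactly supported classes does not by itself let you evaluate it on $[\omega^{n-1}]$, and the equality with $\int_{\overline D}\omega^{n-1}$ is exactly what has to be proved, not transported formally.

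The paper fills this in as follows: it introduces the de Rham-type cohomology $H^{2n-2}_{\cD-\dR}$ of the complex space $X$ (forms in the sense of local embeddings), forms the commuting square with $H^{2n-2}_{c,\cD-\dR}(X\setminus Z,\R)$ via extension by zero of compactly supported forms (legitimate because their support avoids $Z$), invokes Herrera's theorem for the surjectivity of $H^{2n-2}_{\cD-\dR}(X,\R)\to H^{2n-2}(X,\R)$ and deduces from the diagram that it is an isomorphism, and then exhibits an explicit extension of the functional: the closed integration current over the closure $D$ of the divisor in $X$, whose restriction to compactly supported classes on $X\setminus Z$ is $[D|_{X\setminus Z}]$. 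Only with this representation does $\langle c_1^h(\cF),\omega^{n-1}\rangle=\int_D\omega|_D^{n-1}\geq 0$ follow. Your proposal needs this (or an equivalent Borel--Moore/Borel--Haefliger argument with a proven compatibility between the de Rham class of $\omega^{n-1}$ on $X$ and singular cohomology) to be complete; as written, the decisive step is asserted rather than established. A secondary, smaller point: justifying $c_1(\cF|_{X_{\reg}})=\sum_i m_i[D_i\cap X_{\reg}]$ ``by a local calculation near a generic point'' is not enough for an identity of global cohomology classes; argue instead via $c_1(\cF)=c_1(\det\cF)$ (Whitney formula for the resolution of Remark 1) together with the canonical section of the determinant of a torsion sheaf, which is how the paper frames it.
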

\begin{proof}
Consider the following commuting diagram
$$
\begin{tikzcd}
{H^{2n-2}_{c,\cD-dR}(X \setminus Z ,\R )} \arrow[r,"\simeq"] \arrow[d] & {H^{2n-2}_c(X \setminus Z,\R)} \arrow[d, "\simeq"] \\
{H^{2n-2}_{\cD-dR}(X,\R)} \arrow[r]                        & {H^{2n-2}(X ,\R)}     .
\end{tikzcd}$$
Notice that the isomorphism on the first line follows from the assumption that $X \setminus Z$ is smooth (cf. Chap IV. (7.10) \cite{agbook}). 
By \cite{Her67}, the bottom morphism is surjective.
By the diagram, it is in fact an isomorphism.

Some explanations are needed for the left arrow.
Let $\alpha$ be a smooth form with compact support on $X \setminus Z$.
Then the extension by 0 of $\alpha$ defines a smooth form on $X$.
In fact, the support of $\alpha$ is away from the singular part of $X$.
Thus locally near the singular part, the extension by 0 of $\alpha$ is just the restriction of the zero form.

The determinant line bundle of torsion sheaf $\cF|_{X \setminus Z}$ has a global non-trivial section which defines a current $[D|_{X \setminus Z}]$ representing an element in ${H^{2n-2}_c(X \setminus Z,\R)}^* \simeq {H^{2n-2}_{c,\cD-dR}(X \setminus Z ,\R )}^*$.
The closure of the divisor defined by this non-trivial section in $X$ will be denoted by $D$ which as a current defines an element in ${H^{2n-2}_{\cD-dR}(X,\R)}^* \simeq {H^{2n-2}_{}(X,\R)}^*$ whose image in ${H^{2n-2}_c(X \setminus Z,\R)}^*$ is the class of $[D|_{X \setminus Z}]$.

In particular, we have
$$\langle c_1^h(\cF), \omega^{n-1} \rangle=  \int_D \omega|_D^{n-1} \geq 0.$$
\end{proof}

In the following paragraph, we recall the orbifold Chern classes of an orbifold vector bundle 
in \cite{LT16}.
In particular, we emphasize that the orbifold Chern classes can be defined without using metrics and can be defined in the cohomologies with rational coefficients which seems to be lack of reference following \cite{LT16}.
Recall that the orbifold Chern classes of an orbifold vector bundle (recalled below) in de Rham cohomology are defined to be the classes represented by the Chern curvature forms of some orbifold smooth metric in real coefficients case.

Notice first by the construction of Definition 4, the first orbifold Chern class of an orbifold line bundle in de Rham cohomology coincides with the one in Definition 4.

Recall the following version of the Leray-Hirsch theorem due to \cite{PS03}.
\begin{mydef}
 A continuous map $p : M \to B$ is a locally trivial fibration, say with
fibre $F$, in the orbifold sense if for every $b \in B$ there exists a neighbourhood $V_b$, a
topological space $U_b$, and a topological group $G_b$ such that
\begin{enumerate}
 \item $G_b$ acts on $U_b$ and on $F$; the action on $F$ is by homeomorphisms homotopic
to the identity;
\item  $V_b$ is homeomorphic to $U_b /G_b$;
\item $p^{-1} (V_b)$ is homeomorphic to the quotient of $U_b \times F$ by the product action of $G_b$.
\end{enumerate}
 In this setting, composing the natural quotient map $F \to F/G_b$ with the homeomorphism $F/G_b \simeq p^{-1}(b)$ and the inclusion $p^{-1} (b) \to X$, defines the orbifold fibre inclusion $r_b : F \to X$.
\end{mydef}
\begin{mythm}(\cite{PS03})
Let $p : M \to B$ be a fibration which is locally trivial in the orbifold sense. Suppose that for all $q \geq 0$ there exist classes $e_1, \cdots, e_{n_q} \in  H^q (M , \Q)$
that restrict to a basis for $H^q (F , \Q)$ under the map induced by the orbifold fibre inclusion $r_b : F \to M$. The map $a \otimes r_b^*(e_i ) \mapsto p^* a \cup e_i, a \in H^\bullet(B, \Q)$ extends
linearly to a graded linear isomorphism
$$H^\bullet (B, \Q) \otimes H^\bullet (F, \Q) \simeq H^{\bullet} (M , \Q).$$
\end{mythm}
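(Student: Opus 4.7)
The plan is to deduce the orbifold Leray--Hirsch theorem from a Leray-type spectral sequence for the map $p$, in complete analogy with the classical argument, after verifying that the orbifold local-triviality hypothesis is strong enough to produce the usual rational-coefficient input.

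First I would set up the Leray spectral sequence
\[
E_2^{s,q} = H^s(B, R^q p_* \underline{\Q}) \Rightarrow H^{s+q}(M, \Q),
\]
and compute the stalks of $R^q p_* \underline{\Q}$ using the orbifold local-triviality. Picking $V_b = U_b/G_b$ with $U_b$ contractible (one can shrink $U_b$ to an equivariantly contractible neighbourhood of the fixed fibre), we have $p^{-1}(V_b) \simeq (U_b \times F)/G_b$. Since $G_b$ is a finite topological group acting on $U_b \times F$, the transfer argument in rational coefficients gives
\[
H^q(p^{-1}(V_b), \Q) \;\simeq\; H^q(U_b \times F, \Q)^{G_b} \;\simeq\; H^q(F, \Q)^{G_b},
\]
and by the hypothesis that the $G_b$-action on $F$ is through homeomorphisms homotopic to the identity, the induced action on $H^q(F, \Q)$ is trivial. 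Hence the stalk is naturally identified with $H^q(F, \Q)$, and the monodromy around any loop in $B$ factors through the identity on $H^q(F, \Q)$, so $R^q p_* \underline{\Q}$ is already a constant sheaf.

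Next I would use the given classes $e_1, \ldots, e_{n_q} \in H^q(M, \Q)$ to trivialise $R^q p_* \underline{\Q}$ concretely. Each $e_i$ has a fibrewise restriction that defines a global section of $R^q p_*\underline{\Q}$, and by the basis hypothesis these sections give a basis at every stalk; hence $R^q p_* \underline{\Q} \simeq H^q(F, \Q) \otimes \underline{\Q}_B$ as constant sheaves. The $E_2$ page therefore reads
\[
E_2^{s,q} \;\simeq\; H^s(B, \Q) \otimes H^q(F, \Q).
\]
Degeneracy is standard once the $e_i$'s exist globally: each $e_i \in H^q(M, \Q)$ is a permanent cycle in the spectral sequence whose image in $E_\infty^{0,q}$ spans the associated graded of the filtration at the fibre level, so every element of $E_2^{0,q}$ is a permanent cycle, and multiplicativity of the spectral sequence with respect to $p^*$ propagates this to all $E_2^{s,q}$; consequently $d_r = 0$ for $r \geq 2$.

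Finally I would identify the morphism $\Phi : a \otimes r_b^*(e_i) \mapsto p^* a \cup e_i$ with the map provided by the edge homomorphism plus the chosen trivialising sections. Compatibility of the cup product with the spectral sequence filtration shows $\Phi$ agrees with the map sending the summand $H^s(B,\Q)\otimes\langle r_b^* e_i\rangle$ into the filtration piece corresponding to $E_\infty^{s,q}$ and inducing the identification above on the associated graded; since the filtration is finite and $\Phi$ is an isomorphism on each graded quotient, $\Phi$ is a graded linear isomorphism, completing the proof. The main obstacle is the stalk computation in the first step: one must check that the orbifold local-triviality definition genuinely allows the rational-coefficient transfer (equivalently, that $G_b$ acts properly on $U_b \times F$ and that $U_b$ can be taken equivariantly contractible), and that the homotopy-to-identity condition is truly what forces the local system to trivialise; everything after that is the standard Leray--Hirsch bookkeeping transposed verbatim.
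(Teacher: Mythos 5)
The paper itself gives no proof of this statement: Theorem 2 is quoted from Peters--Steenbrink \cite{PS03} and used as a black box, so there is no in-paper argument to compare yours against. Judged on its own terms, your proposal is the standard Leray spectral sequence proof of Leray--Hirsch (stalk computation for $R^q p_*\underline{\Q}$, trivialisation of that sheaf by the global classes $e_i$, degeneration at $E_2$ via permanent cycles and multiplicativity, and identification of the map $a\otimes r_b^*(e_i)\mapsto p^*a\cup e_i$ through the Leray filtration), and this architecture is sound and essentially complete for the setting the paper actually uses, namely finite isotropy groups $G_b$ acting on locally nice (Euclidean) charts.

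Two points need attention. First, the claim that, because each $G_b$ acts on $F$ by homeomorphisms homotopic to the identity, ``the monodromy around any loop in $B$ factors through the identity on $H^q(F,\Q)$, so $R^q p_*\underline{\Q}$ is already a constant sheaf'' does not follow from the hypotheses: the homotopy-to-identity condition constrains only the local group action, not the transition data between charts, and local triviality alone never forces the local system to be constant --- that is precisely why Leray--Hirsch needs the global classes. Fortunately the claim is redundant, since your next paragraph correctly obtains constancy from the sections induced by the $e_i$ together with the stalkwise basis hypothesis; simply delete the monodromy remark. Second, the local step $H^q(p^{-1}(V_b),\Q)\simeq H^q(U_b\times F,\Q)^{G_b}$ uses the transfer and therefore requires $G_b$ to be finite (for an infinite group the quotient mixes in group cohomology --- a mapping torus $(\R\times F)/\Z$ already violates the formula), and computing the stalk of $R^q p_*\underline{\Q}$ as $H^q(F,\Q)$, identified via the orbifold fibre inclusion $r_b$, requires a cofinal family of invariant neighbourhoods $U'\subset U_b$ whose cohomology collapses in the colimit (local contractibility), plus the comparison of sheaf and singular cohomology on $B$ and $M$. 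The statement as transcribed allows an arbitrary topological group $G_b$ and arbitrary $U_b$, so you should state these finiteness and local-niceness hypotheses explicitly (they hold in the paper's applications, where $G_b$ is a finite local isotropy group acting linearly on a Euclidean open set); you flag this obstacle yourself, and with those hypotheses added the rest is indeed the standard Leray--Hirsch bookkeeping and goes through.
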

Let $\cF$ be a reflexive sheaf over a complex orbifold $X$.
By the diagram in Proposition 1, with the same notations, $(\pi_\alpha^* \cF/\tors)_\alpha$ naturally defines an orbifold sheaf on $X$  or equivariant coherent sheaves on $\tilde{U}_\alpha$ in the terminology of Geometric Invariant Theory ``gluing'' via the diagram in Proposition 1, since taking pullback modulo torsion is a functor.
We refer to \cite{Fau19} for further discussion.
To simplify the notation, we sometimes call $\{\tilde{U}_\alpha\}_{\alpha}$ local ramified smooth covers.
Here $\tors$ means the torsion part of the corresponding coherent sheaf.
The natural morphism
$ \cF \to (\pi_{\alpha,*} \pi^*_\alpha \cF/\tors)^{G_\alpha}$ is isomorphic in codimension 1.
Since $\cF$ is reflexive, it is in fact an isomorphism.
Recall that the first Chern class of an orbifold sheaf $(\cG_\alpha)_\alpha$ is defined to be the first Chern class of its determinant orbifold line bundle (i.e. the determinant line bundle of $\cG_\alpha$ on $\tilde{U}_\alpha$).
In fact, the first Chern class of $(\pi_\alpha^* \cF/\tors)_\alpha$ is equal to the first Chern class of $\cF$ by restricting on the regular part of $X$.

In the following, a reflexive sheaf $\cF$ will be called an orbifold vector bundle if for any $\alpha$, $((\pi_\alpha^* \cF/\tors)^{**})_\alpha$ is locally free.
Note that for an orbifold vector bundle $E$, the projectification $\P(E)$ of $E$ is naturally defined as an orbifold with a tautological orbifold line bundle $\cO_{\P(E)}(1)$ and natrual projection $p: \P(E) \to X$ which is an orbifold morphism.
With a possible restriction to a smaller $\tilde{U}_\alpha$, we can assume that the action on some (holomorphic) localisation of  $((\pi_\alpha^* \cF/\tors)^{**})_\alpha$ is the product of the group action on $\tilde{U}_\alpha$ and some group action on $\C^{r}$ where $r$ is the rank.
Using $G_\alpha$-invariant metrics on $((\pi_\alpha^* \cF/\tors)^{**})_\alpha$ which is compatible with the orbifold structure, one can define the positivity of an orbifold vector bundle.
\begin{mydef}
Let $(X, \omega)$ be a compact K\"ahler orbifold.
An orbifold vector bundle $E$ is said to be nef (or strongly pseudo-effective) if for any $\epsilon >0$, there exists an orbifold smooth (or singular) metric $h_\epsilon$ on $E$ such that $c_1(\cO_{\P(E)}(1), h_\epsilon)+\epsilon p^* \omega$ is positive (or positive in the sense of currents with analytic singularities such that the projection of the singular part under $p$ is not dominant over $X$) over each local ramified smooth cover using the usual formula of Chern curvature.

A numerically flat orbifold vector bundle $E$ is an orbifold vector bundle such that $E, E^*$ are nef orbifold vector bundles.
An orbifold vector bundle is said to be Hermitian flat if there exists an orbifold metric on it which is Hermtian flat over each local ramified smooth cover using the usual formula of Chern curvature.
\end{mydef}
Note that an orbifold vector bundle of rank $r$, $p:E \to X$ or its projectification $\pi: \P(E) \to X$ is a locally trivial fibration with fibre $\C^n$ (or $\P^{r-1}$) in the orbifold sense.
Notice that $\mathrm{ GL}(r,\C),\mathrm{ PGL}(r,\C)$ are connected and thus the action on the orbifold fibre is homotopic to the identity.

\begin{mydef}(Orbifold Chern classes in rational coefficients)

Let $p:E \to X$ be an orbifold vector bundle of rank $r$.
Note that $\P(E)$ is also a complex orbifold and $\cO_{\P(E)}(1)$ is an orbifold vector bundle such that the restriction of $c_1(\cO_{\P(E)}(1))$ generates $H^{\bullet}(\P^{r-1}, \Q)$ by the orbifold fibre inclusion.
By construction,  the restriction of $\cO_{\P(E)}(1)$ by the orbifold fibre inclusion is the tautological line bundle.
In fact, the restriction of $c_1(\cO_{\P(E)}(1)) \in H^2(\P(E), \R)$ by the orbifold fibre inclusion is represented by Chern curvature form whose restriction by the orbifold fibre inclusion is just the Fubini-Study metric.

Thus
there are unique elements $c_i(E) \in H^2(X, \Q)$, such that
$$c_1(\cO_{\P(E)}(1))^r +\sum_i (-1)^i \pi^* (c_i(E)) \cup c_1(\cO_{\P(E)}(1))^{r-i}=0$$
by Leray-Hirsch theorem (Theorem 2). We define the orbifold Chern classes of the orbifold vector bundle $E$ to be precisely the $c_i(E)$.

\end{mydef}
By the Leray-Hirsch theorem, it is easy to see that if an orbifold vector bundle has a nowhere vanishing section, its top orbifold Chern class is trivial.

It is interesting to define orbifold Chern classes of an orbifold vector bundle in integral singular cohomology.
One difficulty comes from the fact that for a compact complex manifold $X$ with holomorphic actions of $G$ a finite group
$$H^\bullet(X/G, \Q)=H^\bullet(X , \Q)^G.$$
However, the next example shows that the analogue in integral singular cohomologies is false in general.
An easier smooth example is the following. 
An Enriques surface $X$ is a quotient of a K3 surface by a fixed-point-free involution.
$H^2(X,\Z)$ contains a non-trivial torsion element while the invariant part of the second cohomology of the K3 surface is free.

\begin{myex}
{\rm (cohomology of singular Kummer surface due to Torsten Ekedahl)

Let $X$ be the quotient of a torus $A$ by an involution $\sigma$. 
Let $\pi: \tilde{X} \to X$ be the minimal resolution of $X$ by blowing up the 16 singular points $x_i(1 \leq  i\leq 16)$.
By proper base change, we thus have
$$\pi_* \Z_{\tilde{X}}=\Z_X, R^2 \pi_* \Z_{\tilde{X}}=\oplus_i \Z_{x_i}, R^j \pi_* \Z_{\tilde{X}}=0(\forall j \neq 0,2).$$

By Leray spectral sequence, we have 
$$H^i(X,\Z)\simeq H^i(\tilde{X},\Z) (\forall i \neq 2,3).$$
Since $\tilde{X}$ is a K3 surface, all its cohomologies are known.
Since $H^3(\tilde{X},\Z)=0$, $E^{3,0}_\infty=0$ and thus $d_3: E^{0,2}_2=H^0(X, R^2 \pi_* \Z_{\tilde{X}}) \to E^{3,0}_2=H^3(X,\Z)$ is surjective.
We have an exact sequence
$$0 \to E^{0,2}_\infty \to E^{0,2}_2 \to E^{3,0}_2 \to 0.$$
On the other hand, $E^{1,1}_\infty=E^{1,1}_2=0, E^{2,0}_\infty=E^{2,0}_2$.
As graded modules, we have an exact sequence
$$0 \to E^{2,0}_2=H^2(X,\Z) \to H^2(\tilde{X},\Z) \to E^{0,2}_\infty \to 0.$$
Thus $H^3(X,\Z)$ is the coimage of edge homomorphism $H^2(\tilde{X},\Z) \to H^0(X,  R^2 \pi_* \Z_{\tilde{X}})$.
We claim that the edge homomorphism is given by taking intersection number with the $(-2)-$curves in $\tilde{X}$.
By the functionality, it is enough to show that the edge homomorphism $H^2(\tilde{X},\C) \to H^0(X,  R^2 \pi_* \C_{\tilde{X}})$ is given by taking intersection number with the $(-2)-$curves in $\tilde{X}$.
Since $\tilde{X}$ is smooth,
$\C_{\tilde{X}}$ is quasi-isomorphic to the complex of smooth forms $A^\bullet_{\tilde{X}}$.
The edge morphism is given by $H^2(\tilde{X},\C)=\H^2(\tilde{X},A^\bullet_{\tilde{X}}) \to H^0(X, \mathcal{ H}^2 (\pi_* A^\bullet_{\tilde{X}}))$.
For any $i$, the germ $\mathcal{H}^2 (\pi_* A^\bullet_{\tilde{X}})_{x_i}$ is identified to $\C$ by integration of the smooth form representatives of $\H^2(\tilde{X},A^\bullet_{\tilde{X}})$ along the curves which finishes the proof of the claim.

By exact sequence
$$0 \to H^2(X,\Z) \to H^2(\tilde{X},\Z) \to H^0(X,  R^2 \pi_* \Z_{\tilde{X}}),$$
$H^2(X,\Z)$ is the othogonal complement of the $(-2)-$curves which identifies to $H^2(A,\Z)$ by Corollary 5.6 Chap. VIII \cite{BHPV04}. 

Since the $(-2)-$curves define classes in $H^2(\tilde{X},\Z)$, by considering their image,
the coimgae of edge homomorphism is a quotient of $(\Z/2\Z)^{\oplus 16}$.
By Proposition 5.5 Chap. VIII \cite{BHPV04},
image of induced $H^2(\tilde{X},\Z) \to (\Z/2\Z)^{\oplus 16}$ is caradinal $2^5$.
In particular,
$H^3(X, \Z)$ has non-trivial torsion element.

Note that $H^\bullet(A, \Z)^{\Z/2\Z} $
has however no torsion elements.
}
\end{myex}
There are many other examples provided in \cite{BCGP}.
\begin{myex}
{\rm (\cite{BCGP})
A surface $S$ with rational double points which is the
quotient of a product of curves by the diagonal action of a finite group is called a product-quotient surface.
In Section 5 \cite{BCGP}, they classified such surfaces with genus of the curves at least 2, $h^{0,1}(S)=h^{0,2}(S)=0$ which is singular.
In particular, the fundamental group of $S$ is finite.
Their calculations show that $H_1(S,\Z)$ is non trivial (in other words, the fundamental group is not perfect).
By universal coefficients theorem, $H^2(S,\Z)$ has non-trivial torsion element.
Thus the exact sequence $H^1(S, \cO^*_S) \to H^2(S, \Z) \to H^2(S, \cO_S)=0$ implies the existence of line bundle over $S$ with non-trivial torsion Chern class.
}
\end{myex}
\begin{myex}
{\em 
Let $X=A/ \langle \pm 1 \rangle$ as in Example 1.
Let $\pi: A \to X$ be the quotient map which can also be viewed as an orbifold morphism.
The natural morphism $T_A \to \pi^* T_X$ is in fact an isomorphism by construction.
In particular, the ``orbifold" second Chern class of $X$ satisfies 
$$c_2(X) \cap [X]=\frac{1}{2} \pi_* (\pi^* c_2(X) \cap [A])=\frac{1}{2} \pi_* (c_2(T_A)\cap [A])=0.$$
For any de Rham cohomology class (of ''orbifold" smooth forms) $\alpha$ on $X$, we have that
$$\langle \pi_* (\pi^* c_2(X) \cap [A]), \alpha \rangle=\langle (\pi^* c_2(X) \cap [A]), \pi^* \alpha \rangle=\langle [A], \pi^*(c_2(X) \wedge \alpha) \rangle$$
by the natural pair between singular cohomology and singular homology which is also equal to
$$\int_A \pi^*(c_2(X) \wedge \alpha)=2 \int_{X}c_2(X) \wedge \alpha=\langle 2 c_2(X) \cap [X], \alpha \rangle.$$
Since 
the Poincaré-Verdier duality holds on complex orbifold,
$c_2(X)=0.$ 

Another way to prove this fact is to notice that there exists an invariant flat metric on $A$ which induces a flat metric on $X$.
Moreover, the above calculations also work for $X=Y/G$ where $Y$ is a complex manifold with a holomorphic finite group $G$ action. Let $|G|$ be the order of $G$ and $\pi: Y \to X$ be the quotient map. We have that
$$c_k(X) \cap [X]=\frac{1}{|G|} \pi_*(c_k(Y) \cap [Y])$$
for any $k$.  
}
\end{myex}

Now we begin to show that any K\"aher orbifold is also a K\"ahler complex space.
For it, we need some results on pluripotential theory on complex space. We recommend the article \cite{Dem85} for further information and reference.

Recall that the definitions of test functions and currents are local in nature.
To define them on a complex space $X$,
we can identify $X$ as a closed analytic subset of an open set $\Omega \subset \C^N$ as recalled before.
The topology of smooth forms is induced by quotient topology.
The corresponding dual space is then defined to be the space of currents on $X$.
\begin{mydef}(D\'efinition 1.9 \cite{Dem85})
A locally integrable function $V$ (with respect to area measure induced from Lebesgue measure of any local embedding) over a complex space $Z$ is called weakly psh (resp. weakly quasi-psh) if $V$ is locally bounded from above and $i \d \dbar V \geq 0$ in the sense of currents (resp. $i \d \dbar V \geq \alpha$ in the sense of currents with $\alpha$ a smooth form on $Z$).

\end{mydef}
When the complex space $Z$ is smooth, the condition that $V$ is locally bounded from above follows from the condition that $V$ is locally integrable and $i \d \dbar V \geq 0$ (resp. $i \d \dbar V \geq \alpha$ in the sense of currents with $\alpha$ a smooth form on $Z$).
However, this is not always the case in the singular setting.

We have also the definition of the psh function over a complex space.
\begin{mydef}
(D\'efinition 1.9 \cite{Dem85})
Let $V: Z \to [-\infty, \infty[$ be a function that is not identically infinite over any open set of $Z$.
Then $V$ is called psh (resp. quasi-psh) if for any local embedding $j: Z \hookrightarrow \Omega \subset \C^N$, $V$ is the local restriction of a psh (resp. quasi-psh) function on $\Omega$.
\end{mydef}
We have the following equivalent definition of psh functions due to J.E. Fornaess and R. Narasimhan.
\begin{mythm}
(Theorem 5.10 \cite{FN})

A function $V$ is a psh function over a complex space $X$ if and only if

(1) $V$ is upper semi-continuous.

(2) for any holomorphic map $f: \Delta \to X$ from the unit disc $\Delta$,
either $V \circ f$ is subharmonic or $V \circ f$ is identically infinite. 
\end{mythm}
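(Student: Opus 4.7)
The plan is to prove the two implications separately; only the reverse direction is substantive.

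The forward direction is routine. If $V$ is psh in the sense of Definition 10, then locally after an embedding $j:X\hookrightarrow\Omega\subset\C^N$ one has $V=\tilde V\circ j$ for some psh function $\tilde V$ on $\Omega$. Upper semi-continuity of $V$ is inherited from $\tilde V$, and for any holomorphic $f:\Delta\to X$ the composition $V\circ f=\tilde V\circ (j\circ f)$ is subharmonic on $\Delta$ or identically $-\infty$ by the classical Euclidean theory, since $j\circ f$ is holomorphic into $\Omega$.

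For the converse, I would first reduce to the case where $X$ is normal by passing to the normalisation $\nu:\tilde X\to X$. Since $\nu$ is continuous and surjective, $V\circ\nu$ is USC, and every holomorphic disc $g:\Delta\to\tilde X$ descends to a holomorphic disc $\nu\circ g:\Delta\to X$, so condition (2) is preserved for $V\circ\nu$; if $V\circ\nu$ is shown psh on $\tilde X$, it descends to $V$ on $X$ because $\nu$ is biholomorphic over the regular part and both functions are USC. Assuming now $X$ normal, on the complex manifold $X_{\reg}$ the classical manifold characterisation of psh functions (upper semi-continuity together with subharmonicity on every holomorphic disc, equivalently on every complex line in local coordinates) gives that $V|_{X_{\reg}}$ is psh in the ordinary sense. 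Normality forces $X_{\sing}$ to have codimension at least two, and $V$ is locally bounded above since it is USC, so the Grauert--Remmert extension theorem for psh functions across analytic subsets of codimension at least two in a normal complex space produces a unique psh extension across $X_{\sing}$; this extension coincides with $V$ pointwise by upper semi-continuity.

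To conclude in the sense of Definition 10 one must still produce, near each point $p\in X$, a psh function $\tilde V$ on an ambient Euclidean open set $\Omega\subset\C^N$ restricting to $V$ on $X\cap\Omega$. The standard construction uses a generic finite linear projection $\pi:\Omega\to\C^n$ (with $n=\dim X$) whose restriction to $X$ is a branched covering near $p$, and takes $\tilde V$ to be the fibrewise upper envelope of $V$, or equivalently the maximal psh minorant $\sup\{u\ \text{psh on}\ \Omega : u|_X\leq V\}$. One then checks that $\tilde V|_X=V$ by verifying sub-mean inequalities on complex lines in $\Omega$ and using Hartogs-type arguments across the branch locus of $\pi$. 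This last step is the principal obstacle: a generic complex line in $\C^N$ meets $X$ only in finitely many points, so transporting the subharmonicity assumption from discs inside $X$ to the psh character of $\tilde V$ on discs in $\Omega$ demands careful handling of branching and of the upper semi-continuous regularisation across the discriminant. This is the technical heart of the argument in \cite{FN}, and it is the reason the theorem is nontrivial even though the intrinsic conditions (1) and (2) look very natural.
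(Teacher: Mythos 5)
This statement is not proved in the paper at all: it is quoted verbatim as Theorem 5.10 of Fornaess--Narasimhan \cite{FN}, so there is no internal proof to compare your attempt with, and your proposal has to stand on its own. It does not: the converse implication, which is the entire content of the theorem, is not actually established. You yourself flag that the step producing, near each point, a psh function $\tilde V$ on an ambient open set $\Omega\subset\C^N$ with $\tilde V|_X=V$ (via a branched projection and an upper envelope, with Hartogs-type arguments across the discriminant) is ``the technical heart of the argument in \cite{FN}'' and you defer it to that reference. Since psh-ness on a complex space is \emph{defined} (Definition 10) as being locally the restriction of an ambient psh function, deferring exactly this construction means the proof is a plan rather than a proof; everything you do carry out (USC plus disc-subharmonicity on $X_{\reg}$, hence classical psh-ness there, plus an extension across $X_{\sing}$) only yields what the paper would call a weakly psh function, and Example 5 of the paper shows this is genuinely weaker.

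Two further steps are shaky even as reductions. First, the descent from the normalisation is unjustified: knowing that $V\circ\nu$ is psh on $\tilde X$ and that $V$ is USC does not give that $V$ is the restriction of an ambient psh function at points of the non-normal locus ($\nu$ being biholomorphic over $X_{\reg}$ says nothing there); producing the ambient extension at such points is precisely the same difficulty as the theorem itself, so the reduction buys nothing. Second, the assertion that the extension of $V|_{X_{\reg}}$ across $X_{\sing}$ ``coincides with $V$ pointwise by upper semi-continuity'' uses the inequality in the wrong direction: USC gives $\limsup_{y\to x}V(y)\le V(x)$, whereas identifying the extension with $V$ at $x\in X_{\sing}$ requires $V(x)\le\limsup_{y\to x,\,y\in X_{\reg}}V(y)$, which must be extracted from hypothesis (2) applied to a disc through $x$ not contained in $X_{\sing}$ (and the existence of such discs needs an argument). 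As it stands, only the routine forward implication is complete.
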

As a direct consequence, the pullback of a quasi-psh function between complex spaces is still quasi-psh.

Of course, over a complex manifold, the definitions of the psh function and weakly psh function coincide.
However, the definitions of psh and weakly psh function are different in general over a complex space.
\begin{myex}
{\em
Let $Z \subset \C^2$ be a complex space defined by $\{ z_1z_2=0 \}$. 
Consider a function $V$ which is identically equal to 1 on $\{z_1=0\}$ and identically equal to 0 otherwise.
Then $V$ can not be the restriction of some psh function on any open neighbourhood of 0 in $\C^2$.
Otherwise, the restriction of such a function on $\{z_2=0\}$  should be identically equal to 0, which contradicts its value at the origin.
We claim, however, $V$ is a weakly psh function on $Z$.
In fact $i \d \dbar V=0$.
It is enough to consider the open neighbourhood of the origin.
Let $g$ be a test function of $Z$ near the origin.
$\langle i \d \dbar V, g \rangle=\langle V, i \d \dbar g \rangle=\int_{\{z_1=0\} } i \d \dbar g=0$ for any $g$.
}
\end{myex}
We also recall the Bott-Chern cohomology class which is more precise than the previously considered de Rham cohomology.
By Lemma 4.6.1 of \cite{BEG},  any pluriharmonic distribution on a normal complex space $X$ is locally the real part of a
holomorphic function, i.e. the kernel of the $i \d \dbar$ operator on the sheaf of distributions of bidegree $(0,0)$ coincides with the sheaf $\R \cO_X$ of real parts of holomorphic germs.
The Bott–Chern cohomology space is defined to be
$$H^{1,1}_{BC}(X, \C):=H^1(X, \R \cO_X).$$
If $X$ is a complex orbifold, the complex of orbifold smooth forms gives a soft resolution of $\C_X$.
The usual definition of Bott-Chern complex using orbifold smooth forms or orbifold currents gives an exact sequence
$$0 \to \R\cO_X \to A^{0,0}_{X,orb} \xrightarrow{i \d \dbar} A^{1,1}_{X,orb} \to A^{2,1}_{X,orb} \oplus A^{1,2}_{X,orb} \to \cdots$$
where $A^{i,j}_{X,orb}$ means the sheaf of orbifold smooth forms or currents of bidegree $(i,j)$.
In particular, an element of $H^{1,1}_{BC}(X, \C)$ can be represented by global orbifold $(1,1)-$forms.
If $X$ is furthermore smooth, we find the usual definition of Bott-Chern cohomology defined by the Bott-Chern complex.

Let $\omega$ be a K\"aher form on $X$.
Locally $\omega$ is the restriction of $i \d \dbar$ of a smooth function such that the difference on the interestion with other local charts is pluriharmonic.
In other words, $\omega$ defines an element in
$H^0(X, C^\infty_X/\R\cO_X)$.
The short exact sequence
$$0 \to \R \cO_X \to C^\infty_X \to C^\infty_X/ \R \cO_X \to 0$$
implies that $H^1(X, \R \cO_X) \simeq H^0(X, C^\infty_X/\R\cO_X)/i \d \dbar A^{0,0}_X(X)$.
In particular, the K\"ahler form $\omega$ defines an element in $H^{1,1}_{BC}(X,\C)$.

Since the sheaf $\R \cO_X$ is quasi-isomorphic to $C^\infty_X \to C^\infty_X/ \R \cO_X$, the complex morphism
$$\begin{tikzcd}
0 \arrow[r] \arrow[d] & 0 \arrow[d] \arrow[r] & A^0_X \arrow[d, "-\d"] \arrow[r] & A^0_X/\R\cO_X \arrow[r] \arrow[d, "\d \dbar"] & 0 \arrow[d] \arrow[r] & \cdots \\
0 \arrow[r]           & A^0_X \arrow[r]       & A^1_X \arrow[r, "d"]             & A^2_X \arrow[r]                               & A^3_X \arrow[r]       & \cdots
\end{tikzcd}$$
induces
$H^{1,1}_{BC}(X,\C) \to H^2_{\cD-\dR}(X,\C)$.
If we change the complex of smooth forms by the complex of smooth orbifold forms on the bottom line, we find the natural morphism $H^{1,1}_{BC}(X,\C) \to H^2(X,\C)$. 
Notice that the $\d \dbar$ lemma holds for compact K\"ahler orbifold.
The Bott-Chern cohomology is also isomorphic to the orbifold Dolbeault cohomology.
Since the Hodge decomposition holds for compact K\"ahler orbifold,
the Bott-Chern cohomology can be seen as subspace of $H^2(X,\C)$.
The natural morphism $H^{1,1}_{BC}(X,\C) \to H^2(X,\C)$ factorise through $H^2_{\cD-\dR}(X,\C)$
which implies that $H^{1,1}_{BC}(X,\C)$ can also be seen as a subspace of $H^2_{\cD-\dR}(X,\C)$.

We will need the following Richberg regularisation theorem on complex space due to Varouchas \cite{Var}.
\begin{mythm}
Let $X$  be a complex space.  Suppose it admits an open covering 
$(U_\alpha)_\alpha$ and a system of continuous strongly psh functions $\varphi_\alpha$ on $U_\alpha$ such that 
$\varphi_\alpha-\varphi_\beta$ is pluriharmonic on $U_\alpha \cap U_\beta$.
Here a strongly psh function means local restriction of a strongly psh function of the local ambient space.
Then there are smooth strongly psh functions $\tilde{\varphi}_\alpha$ on $U_\alpha$ such that 
$\tilde{\varphi}_\alpha-\tilde{\varphi}_\beta=\varphi_\alpha-\varphi_\beta$. 
In particular, the Bott-Chern cohomology class defined by $i \d \dbar \tilde{\varphi}_\alpha$ is the same as the class defined by $ i \d \dbar \varphi_\alpha$.  
\end{mythm}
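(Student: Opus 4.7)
The plan is to reduce to the classical Richberg smoothing theorem in the ambient Euclidean space of each local chart, and then to patch the local smoothings into globally consistent potentials using Demailly's regularized maximum, whose translation invariance will automatically preserve the pluriharmonic cocycle $h_{\alpha\beta} := \varphi_\alpha - \varphi_\beta$.

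First I would extract a locally finite refinement $(W_\alpha) \subset\subset (U_\alpha)$ of the given cover, still covering $X$. Since each $\varphi_\alpha$ is by definition the restriction of a continuous strongly psh function $\Phi_\alpha$ on an ambient open set $\Omega_\alpha \subset \C^{N_\alpha}$, the classical Richberg regularization theorem on $\Omega_\alpha$ yields, for any prescribed $\eta_\alpha > 0$, a smooth strongly psh $\tilde\Phi_\alpha$ on a neighborhood of $\overline{W_\alpha}$ with $\Phi_\alpha \le \tilde\Phi_\alpha \le \Phi_\alpha + \eta_\alpha$. Restricting to $X$ gives smooth strongly psh functions $\psi_\alpha$ on neighborhoods of $\overline{W_\alpha}$ in $U_\alpha$, uniformly close to $\varphi_\alpha$. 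Note that on each overlap $U_\alpha \cap U_\beta$, the function $\psi_\beta + h_{\alpha\beta}$ is again smooth strongly psh, since $h_{\alpha\beta}$ is pluriharmonic.

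Next, on each $U_\alpha$ I would define
$$\tilde\varphi_\alpha(x) := M_\eta\bigl\{\psi_\beta(x) + h_{\alpha\beta}(x) + c_\beta : \beta \in I(x)\bigr\},$$
where $M_\eta$ is Demailly's regularized maximum, $I(x) := \{\beta : x \in W_\beta\}$ is finite by local finiteness, and the constants $c_\beta$ (depending only on $\beta$, not on $\alpha$) are chosen following the standard Richberg patching recipe so that the term indexed by $\beta$ drops strictly below the rest as $x$ approaches $\partial W_\beta$, making the expression insensitive to whether $\beta \in I(x)$. The two required properties then follow directly: the regularized maximum of finitely many smooth strongly psh functions is itself smooth strongly psh, and for the cocycle, the translation invariance of $M_\eta$ together with the identity $h_{\alpha\beta} - h_{\alpha\gamma} = h_{\gamma\beta}$ yields
$$\tilde\varphi_\alpha(x) - h_{\alpha\gamma}(x) = M_\eta\{\psi_\beta + h_{\alpha\beta} - h_{\alpha\gamma} + c_\beta\} = M_\eta\{\psi_\beta + h_{\gamma\beta} + c_\beta\} = \tilde\varphi_\gamma(x),$$
so $\tilde\varphi_\alpha - \tilde\varphi_\gamma = h_{\alpha\gamma} = \varphi_\alpha - \varphi_\gamma$ on $U_\alpha \cap U_\gamma$, which is the claim.

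The main obstacle is the patching step: one must arrange the shifts $c_\beta$ and the pairs $W_\beta \subset\subset U_\beta$ carefully so that the regularized maximum evaluated over the varying index set $I(x)$ is a globally well-defined smooth function on each $U_\alpha$, i.e.\ that the contribution of $\beta$ becomes inactive in $M_\eta$ precisely before $x$ exits $W_\beta$. Because the arguments $\psi_\beta + h_{\alpha\beta}$ depend on $\alpha$ through the pluriharmonic $h_{\alpha\beta}$, the ordering of arguments near the boundary of $W_\beta$ has to be controlled uniformly in $\alpha$; this forces the smoothing errors $\eta_\alpha$ to be chosen small relative to the spread of the $c_\beta$'s and to the local oscillation of the cocycle on relatively compact overlaps, which is possible since the $h_{\alpha\beta}$ are continuous and the cover is locally finite.
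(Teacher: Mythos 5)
Your overall architecture is the right one, and it is essentially the Richberg--Varouchas gluing scheme (note that the paper does not reprove this statement but quotes it from Varouchas \cite{Var}): smooth each potential in its ambient chart, then glue the family $\psi_\beta+h_{\alpha\beta}$ by a regularized maximum whose corrective terms do not depend on $\alpha$, so that translation invariance of $M_\eta$ together with the identity $h_{\alpha\beta}-h_{\alpha\gamma}=h_{\gamma\beta}$ gives the exact equality $\tilde\varphi_\alpha-\tilde\varphi_\gamma=\varphi_\alpha-\varphi_\gamma$ on overlaps. That part of your computation is correct.

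The genuine gap is in the patching step, precisely the one you single out as the main obstacle: constant shifts $c_\beta$ cannot make the $\beta$-th term drop out near $\partial W_\beta$. Subtracting $\varphi_\alpha$ from all arguments, the comparison of the $\beta$-term with the $\gamma$-term reads $(\psi_\beta-\varphi_\beta)+c_\beta$ versus $(\psi_\gamma-\varphi_\gamma)+c_\gamma$, and since $0\le\psi_\bullet-\varphi_\bullet\le\eta_\bullet$ this comparison is governed by the constants up to uniformly small errors, hence is essentially position-independent. Pick a boundary point $x_0\in\partial W_{\beta^*}$ in $X$ with $c_{\beta^*}$ maximal among the finitely many indices $\beta$ such that $x_0\in\overline{W_\beta}$: just inside $W_{\beta^*}$ near $x_0$ no competing term exceeds the $\beta^*$-term by more than the smoothing errors, so $M_\eta$ cannot be insensitive to that term, and $\tilde\varphi_\alpha$ jumps when $\beta^*$ leaves $I(x)$; if instead all relevant constants lie within the insensitivity threshold of one another, then no term ever becomes inactive and the same discontinuity appears at every chart boundary. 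Hence no tuning of the $c_\beta$ and $\eta_\alpha$ can work. The actual Richberg recipe uses a position-dependent correction $\delta_\beta\theta_\beta$ in place of $c_\beta$, with $\theta_\beta$ smooth, positive on an inner core $W''_\beta\Subset W_\beta$ and negative near $\partial W_\beta$, and $\delta_\beta$ so small that the strict positivity of $i\partial\overline{\partial}\psi_\beta$ absorbs $\delta_\beta\, i\partial\overline{\partial}\theta_\beta$; since $\delta_\beta\theta_\beta$ is still independent of $\alpha$, your cocycle argument survives verbatim, and with this replacement the proof can be completed. Two smaller points should also be made explicit in the singular setting: that $h_{\alpha\beta}$, being pluriharmonic on the complex space, is locally the restriction of an ambient pluriharmonic (hence smooth) function, so that $\psi_\beta+h_{\alpha\beta}$ is indeed smooth strongly psh on $X$; and that a regularized maximum of functions arising as restrictions from different local embeddings is again locally the restriction of a single ambient smooth strongly psh function, which requires comparing the local embeddings.
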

\begin{myrem}
{\em 
Any global section of $C^0_X/\R \cO_X$ also defines an element of $H^{1,1}_{BC}(X,\C)$.
In fact, we have an isomorphism 
$$H^0(X, C^\infty/\R \cO_X)/ i \d \dbar C^\infty(X) \simeq H^0(X, C^0/\R \cO_X)/ i \d \dbar C^0(X) \simeq H^{1,1}_{BC}(X,\C).$$
In literature, any global section $H^0(X, \cD'_X/\R \cO_X)$ is called a $(1,1)-$current with local potential.
Similarly,  any global section $H^0(X, C^0_X/\R \cO_X)$ can be called a $(1,1)-$current with continous local potential.
It is K\"ahler if the local potentials are strongly psh.
The above theorem of Varouchas can be reformulated as follows:

{\it
The open cone generated by K\"ahler forms in the space $H^0(X, C^\infty_X/\R \cO_X)/ i \d \dbar C^\infty(X)$ coincide with the open cone generated by K\"ahler currents with continuous local potentials in the space $H^0(X, C^0_X/\R \cO_X)/ i \d \dbar C^0(X)$.
}

Notice that the Bott-Chern cohomology is also the hypercohomology of some complex in terms of orbifold currents instead of orbifold smooth forms.
Thus we can ask similarly, whether the open cone generated by K\"ahler orbifold forms coincides with the one generated by K\"ahler orbifold currents with continuous potentials.
It is true by the following arguments.
Let $\alpha$ be an orbifold smooth form on a complex orbifold $X$.
Let $\varphi$ be a continuous orbifold function such that 
$$\alpha+i \d \dbar \varphi \geq 0$$
in the sense of currents.
Let $P$ be the total space of the frame bundle with natural projection $\pi: P \to X$.
The construction can be found for example in Section 2.4 of \cite{MM10}.
Then $\pi$ is a $\GL(n,\C)-$equivariant morphism with group action on $P$ induced from the right multiplication of $\GL(n,\C)$.
Locally let $U/G$ be an open set of $X$ with $U \subset \C^n$ smooth and $G$ a finite subgroup of $\SO(2n)$.
The preimage $\pi^{-1}(U/G)$ is isomorphic holomorphically to the orbifold $U \times \GL(n,\C)/G$ where the action of $g \in G$ on $(x, A) \in U \times \GL(n,\C)$ is given by $g \cdot (x,A)=(g \cdot x, gA)$.
Since the action of $G$ on $U \times \GL(n,\C)$ is free, $P$ is a smooth manifold.
Fix a smooth Hermitian form $\omega'$ on $P$.

Thus $\varphi \circ \pi$ is a continous $\pi^* \alpha-$psh function on $P$.
By classical Richberg regularisation theorem \cite{Ric68} \cite{GW75}, there exists $\psi_\varepsilon \in C^\infty(P)$ such that
$|\varphi \circ \pi-\psi_\varepsilon| \leq \varepsilon$ and
$$\pi^* \alpha+i \d \dbar \psi_\varepsilon \geq -\varepsilon \omega'.$$
Let $dg$ be the Haar measure of $\GL(n,\C)$ with total mass 1.
Without loss of generality, we can assume that $\omega'$ is integrable with respect to the Haar measure.
Define $\varphi_\varepsilon:=\int \psi_\varepsilon dg \in C^\infty(P)$.
Since it is $\GL(n,\C)-$invariant, it descends to a smooth orbifold function on $X$.
Since $\psi_\varepsilon$ is bounded on each orbit of $\GL(n,\C)$,
the integration is finite.
We have that
$$\alpha+i \d \dbar \varphi_\varepsilon \geq - \varepsilon \int  \omega' dg$$
as orbifold smooth forms on $X$. 
}
\end{myrem}
Now we prove the inverse of Lemma 1 by the results of Varouchas.
\begin{myprop}
Let $X$ be a complex orbifold and $\omega$ a K\"ahler orbifold form on $X$.
Assume $X$ is paracompact and second countable.
Then $X$ is a K\"ahler complex space.
\end{myprop}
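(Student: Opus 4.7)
The plan is to produce, from the orbifold K\"ahler form $\omega$, a system of continuous strongly psh local potentials on the underlying complex space with pluriharmonic differences on overlaps, and then invoke the Richberg--Varouchas regularisation (Theorem 6) to upgrade them to smooth strongly psh potentials in the complex space sense. Their $i\d\dbar$ will then glue, via the short exact sequence $0 \to \R\cO_X \to C^\infty_X \to C^\infty_X/\R\cO_X \to 0$, into a closed strictly positive smooth $(1,1)$-form on $X$, witnessing that $X$ is a K\"ahler complex space in the same Bott--Chern class as $\omega$.

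First, using paracompactness and second countability, I would fix a locally finite orbifold cover $(U_\alpha,\tilde U_\alpha, G_\alpha,\pi_\alpha)$ with each $G_\alpha$ small in $\GL_n(\C)$, closed embeddings $i_\alpha\colon U_\alpha\hookrightarrow V_\alpha\subset \C^{N_\alpha}$ via generators of the $G_\alpha$-invariant ring, and the fibered-product charts $\tilde V_{\alpha,\beta}$ of Proposition 1. Shrinking each $\tilde U_\alpha$ to a $G_\alpha$-stable Stein polydisc, the smooth K\"ahler form $\pi_\alpha^*\omega$ on $\tilde U_\alpha$ admits a smooth strictly psh potential by the local $\d\dbar$-lemma; averaging over $G_\alpha$ yields a $G_\alpha$-invariant such potential $\tilde u_\alpha$, which descends to a continuous function $u_\alpha$ on $U_\alpha$.

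The pluriharmonicity of $u_\alpha-u_\beta$ on $U_\alpha\cap U_\beta$ then follows by pulling back to $\tilde V_{\alpha,\beta}$: there $i\d\dbar(\tilde u_\alpha-\tilde u_\beta)=0$, so the difference is pluriharmonic upstairs, and being $G_\alpha\times G_\beta$-invariant it descends to a pluriharmonic function on $U_\alpha\cap U_\beta$ since pluriharmonicity on the orbifold coincides with that on the complex space (both are sections of $\R\cO_X$). To see that $u_\alpha$ is strongly psh in the complex space sense, I would appeal to Schwarz's theorem on smooth $G_\alpha$-invariants, which shows that the $G_\alpha$-invariant smooth function $\tilde u_\alpha$ descends to a smooth function on $V_\alpha$ expressed in the invariants, combined with the explicit ambient strictly psh quadratic $\psi_\alpha=\sum_i|g_{i,\alpha}|^2$ of the proof of Lemma 1 which restricts from a genuinely strongly psh polynomial on $V_\alpha$; adding a suitable large constant times $\psi_\alpha$ makes the ambient extension strongly psh, and one uses local pluriharmonic corrections on the overlaps to absorb the resulting mismatch in differences.

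Applying Varouchas' Theorem 6 to the data $(U_\alpha,u_\alpha)$ will then yield smooth strongly psh potentials $\hat u_\alpha$ with the same pluriharmonic differences, and the forms $i\d\dbar\hat u_\alpha$ glue through $C^\infty_X/\R\cO_X$ into a global closed strictly positive smooth $(1,1)$-form on $X$ in the complex space sense. The main obstacle I anticipate is the verification that the averaged descended potential $u_\alpha$ is strongly psh in the complex space sense --- that is, admits a strongly psh extension to a neighborhood of $i_\alpha(U_\alpha)$ in $\C^{N_\alpha}$ --- since $V_\alpha$ may be genuinely singular at images of fixed points; the Schwarz plus Lemma 1 quadratic argument should provide the extension pointwise, but uniformising the correction across the cover so as not to spoil the pluriharmonic-difference property of the $u_\alpha-u_\beta$ is the delicate bookkeeping step of the argument.
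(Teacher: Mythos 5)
Your overall strategy coincides with the paper's: descend the averaged local $\d\dbar$-potentials of $\omega$ to continuous functions $u_\alpha$ on the charts $U_\alpha$, check pluriharmonicity of $u_\alpha-u_\beta$ via the fibered charts of Proposition 1, and feed the system into Varouchas' theorem (Theorem 4 in the paper, not Theorem 6). However, the crucial step --- that $u_\alpha$ itself is strongly psh in the complex-space sense --- is where your argument has a genuine gap. Schwarz's theorem does give a smooth ambient extension $F$ of $u_\alpha$ through the invariants, but the Levi form of $F$ is only controlled along the tangent directions of $i_\alpha(U_\alpha)$ at regular points; at the images of orbifold-singular points the Zariski tangent space is strictly larger and nothing forces positivity there, which is exactly the difficulty you flag. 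Your proposed repair, adding $C_\alpha\psi_\alpha$ to make the ambient extension strongly psh, changes the restriction of the potential to $U_\alpha$: you then have strong psh-ness for $u_\alpha+C_\alpha\psi_\alpha$, not for $u_\alpha$, and the new differences $(u_\alpha-u_\beta)+(C_\alpha\psi_\alpha-C_\beta\psi_\beta)$ are no longer pluriharmonic. The mismatch $C_\alpha\psi_\alpha-C_\beta\psi_\beta$ is built from strictly psh functions and cannot be absorbed by pluriharmonic corrections (subtracting it back simply undoes the fix), so the hypothesis of Varouchas' theorem is destroyed and no amount of ``bookkeeping'' restores it; with incompatible local strictly psh functions one cannot glue a K\"ahler metric on a singular space (the partition-of-unity trick of Lemma 1 needs a global positive form to perturb).

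The paper resolves this point without modifying the potentials: choose $\psi_\alpha$ the restriction to $U_\alpha$ of a smooth strictly psh function of the ambient space, and write $u_\alpha=(u_\alpha-\varepsilon\psi_\alpha)+\varepsilon\psi_\alpha$. For $\varepsilon>0$ small, $\tilde u_\alpha-\varepsilon\pi_\alpha^*\psi_\alpha$ is psh on the ramified cover; since $G_\alpha$ is small, $\pi_\alpha$ is unramified over $U_{\alpha,\reg}$, so $u_\alpha-\varepsilon\psi_\alpha$ is psh on the regular part, and being continuous it is psh on all of $U_\alpha$ by the Fornaess--Narasimhan characterisation (Theorem 3). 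Adding back $\varepsilon\psi_\alpha$ exhibits $u_\alpha$ as psh plus the restriction of an ambient strictly psh function, hence strongly psh in the sense required by Varouchas, while the differences $u_\alpha-u_\beta$ are untouched. If you want to keep your Schwarz-theorem route, you would have to correct $F$ by something vanishing on $i_\alpha(U_\alpha)$ (so as not to change $u_\alpha$), and terms like $C\sum_j|h_j|^2$ with $h_j$ generating the ideal give no positivity on the excess Zariski tangent directions at singular points --- so the $\varepsilon$-subtraction argument of the paper is the step you are missing.
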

\begin{proof}
 Let $(U_\alpha)_{\alpha \in \N}$ be a locally finite cover of $X$ such that $U_\alpha \simeq \tilde{U}_\alpha/G_\alpha$ where $\tilde{U}_\alpha$ are simply connected euclidean open sets and $G_\alpha \subset GL_n(\C)$ are small.
 By assumption, over $U_\alpha$, $\omega$ is given $i \d \dbar \tilde{\varphi}_\alpha$ for some $\tilde{\varphi}_\alpha \in C^\infty(\tilde{U}_\alpha)$ which is $G_\alpha-$invariant.
 Since $U_\alpha$ as the topology space is given by the quotient topology of $\tilde{U}_\alpha$,
 $\tilde{\varphi}_\alpha$ defines some continous function $\varphi_\alpha$ on $U_\alpha$.
 
 We claim that $\varphi_\alpha$ is strictly psh on $U_\alpha$.
 Let $\psi_\alpha$ be a continuous strictly psh function on $U_\alpha$.
 (For example, we can embed $U_\alpha$ in some euclidean space and take $\psi_\alpha$ to be the restriction of some continuous strictly psh function on the euclidean space.)
 Without loss of generality, we can assume that $\tilde{\varphi}_\alpha, \pi_\alpha^* \psi_\alpha$ are defined on the closure of $\tilde{U}_\alpha$.
 Then $\tilde{\varphi}_\alpha-\varepsilon \pi_\alpha^* \psi_\alpha$ is psh on $\tilde{U}_\alpha$ for $\varepsilon>0$ small enough
 since $\tilde{\varphi}_\alpha$ is strongly psh.
Since $\pi_\alpha$ is an unramified cover over the regular part, $\varphi_\alpha-\varepsilon \psi_\alpha$ is psh on the regular part of $U_\alpha$.
Since $\varphi_\alpha-\varepsilon \psi_\alpha$ is continuous, it is psh on $U_\alpha$ by Theorem 3.
 Thus $\varphi_\alpha=(\varphi_\alpha-\varepsilon \psi_\alpha)+\varepsilon \psi_\alpha$ is strongly psh.
 
 By the diagram in Proposition 1, it is easy to see that $\varphi_\alpha-\varphi_\beta$ is pluriharmonic on $U_\alpha \cap U_\beta$.
 Thus by the result of Varouchas (Theorem 4), $X$ is a K\"ahler complex space.
\end{proof}
By combining Lemma 1 and Proposition 4, we see that if $X$ is a compact orbifold, it is K\"ahler as a complex space if and only if it is K\"ahler as an orbifold.
\begin{mycor}
Let $X$ be a smooth compact manifold with $G$ a finite subgroup of biholomorphisms.
Then $X/G$ is a K\"ahler orbifold if and only if $X$ is K\"ahler.
In particular, $X/G$ is projective if and only if $X$ is projective.
\end{mycor}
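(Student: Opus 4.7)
The plan is to handle the K\"ahler and projective parts separately, in both cases via averaging over the finite group $G$ and the behaviour of the finite quotient map $\pi \colon X \to X/G$.

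For the K\"ahler equivalence, observe that $X \to X/G$ is itself a (global) ramified smooth cover for the orbifold $X/G$, so by the definition of K\"ahler orbifold in $\mathcal{M}_k$, an orbifold K\"ahler form on $X/G$ is the same data as a $G$-invariant K\"ahler form on $X$. Hence if $X/G$ is a K\"ahler orbifold then $X$ is K\"ahler. Conversely, if $\omega$ is any K\"ahler form on $X$, the averaged form $\tilde\omega \coloneqq \frac{1}{|G|}\sum_{g\in G} g^*\omega$ is smooth, $G$-invariant, and still strictly positive (convex combinations of K\"ahler forms are K\"ahler), so it provides an orbifold K\"ahler form on $X/G$. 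Combined with Lemma 1 and Proposition 4, this gives the K\"ahler half of the statement both on the orbifold and on the complex-space side.

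For the projective statement, if $X/G$ is projective with an ample line bundle $L$, then $\pi^*L$ is ample on $X$, since the pullback of an ample line bundle under a finite surjective morphism is ample; thus $X$ is projective. Conversely, if $X$ is projective with ample line bundle $L$, I form $M \coloneqq \bigotimes_{g\in G} g^*L$, which is again ample and carries a natural $G$-linearisation (the $G$-action permutes the tensor factors). After replacing $M$ by a sufficiently high tensor power so that the isotropy representation on every fibre over a fixed point of $G$ becomes trivial, $M$ descends to a line bundle $\bar M$ on $X/G$ with $\pi^*\bar M$ ample; ampleness descends along the finite map $\pi$, so $\bar M$ is ample on $X/G$, and $X/G$ is projective.

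The only real subtlety is the descent of $M$ from $X$ to $X/G$: although $M$ is $G$-linearised, the isotropy action on fibres over fixed points of $G$ can be non-trivial and obstruct direct descent, but replacing $M$ by a high enough tensor power trivialises this action and allows the quotient to exist. Apart from this, everything else reduces to standard facts about averaging positive forms and about the behaviour of ampleness under finite morphisms.
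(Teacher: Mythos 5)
Your key step for the K\"ahler equivalence --- that $X \to X/G$ is itself an orbifold chart for $X/G$, so that orbifold K\"ahler forms on $X/G$ are literally the same data as $G$-invariant K\"ahler forms on $X$ --- is exactly where the argument breaks down, and it is the point the paper spends most of its proof on. In the paper's framework the orbifold structure on the complex space $X/G$ is the intrinsic one: by Chevalley--Shephard--Todd and Prill the local uniformizing groups are taken to be small (no complex reflections), so the local charts are quotients of open subsets of $X$ by the local reflection subgroups, not open subsets of $X$ themselves. Your identification is therefore valid only when the action is effective in codimension one, i.e.\ no $g \neq e$ fixes a divisor. When a fixed divisor is present it fails in both directions: for $\C$ with $G=\{\pm 1\}$ acting by $z \mapsto -z$ (quotient coordinate $w=z^2$), the invariant K\"ahler form $i\,dz\wedge d\bar z$ descends to $\tfrac{i}{4|w|}\,dw\wedge d\bar w$, which is not smooth, so averaging does not produce an orbifold K\"ahler form for the intrinsic structure; conversely, a K\"ahler form on the (here smooth) quotient pulls back to a form on $X$ that degenerates along the ramification divisor, so it does not exhibit $X$ as K\"ahler (compact example: an elliptic curve modulo $\pm 1$, giving $\P^1$). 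If instead you insist on the non-reduced charts coming from $X\to X/G$, your equivalence becomes tautological but you can no longer invoke Lemma 1 and Proposition 4 (whose proofs use the small-group charts), so you lose precisely the complex-space content of the corollary.

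This is why the paper argues differently: it first splits off the normal subgroup $H \subset G$ generated by the elements that are not effective in codimension one; by Chevalley--Shephard--Todd $X/H$ is smooth, and by Bingener's theorem the finite flat quotient $X \to X/H$ is a K\"ahler morphism, so $X$ is K\"ahler if and only if $X/H$ is. This reduces to a quasi-\'etale action, and then Varouchas' results on complex spaces are used: Corollary 3.2.1 of \cite{Var} to see that $X/G$ is a K\"ahler complex space when $X$ is K\"ahler, and Lemma 4.2.2 of \cite{Var} to conclude, when $X/G$ is K\"ahler, that $X$ is weakly K\"ahler and hence K\"ahler since it is smooth. Your treatment of the projectivity statement is close in spirit to the paper's (descend an invariant power of $\bigotimes_{g\in G} g^*L$; conversely pull back from $X/G$), though note that before calling $\pi^*L$ ample you must know $X$ is algebraic or Moishezon --- the paper instead pulls back the integral orbifold K\"ahler class and combines it with the K\"ahler part; that step is repairable, whereas the K\"ahler equivalence genuinely needs the reduction described above.
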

\begin{proof}
Note that locally $X/G$ is quotient of some Euclidean coordinate chart of $X$ under some linear group action of $G$ (thus $X/G$ is a complex orbifold) by \cite{Car57}.

 The ``if'' part is trivial. 
 Notice that in this case, we can apply directly Corollary 3.2.1 of \cite{Var} to conclude that $X/G$ is a K\"ahler complex space. 
 Notice that as pointed out in Remark 8.5 of \cite{DHP}, the image of a K\"ahler manifold under a proper surjective morphism is not necessarily K\"ahler.
 Here the result of Varouchas applies since the quotient map $X \to X/G$ is geometrically flat.
 
 Conversely, without loss of generality, we can assume that the action is effective in codimension 1 which means that $\cup_{g \in G, g \neq e} \{x \in X, g \cdot x=x\}$ is codimension at least 2 in $X$.
 Otherwise, the subgroup $H$ generated by non effective in codimension 1 elements forms a normal subgroup of $G$.
 Thus $X /G \simeq (X/H)/(G/H)$ with $X/H$ smooth.
 By the result of \cite{Bin83}, the (flat finite) quotient map $X \to X/H$ is a K\"ahler morphism.
In particular, $X$ is K\"ahler if and only if $X/H$ is K\"ahler.
 If $X/G$ is K\"ahler, by Lemma 4.2.2 of \cite{Var}, $X$ is weakly
K\"ahler. Since $X$ is smooth, $X$ is in fact K\"ahler.

 
 For the last claim, note that an ample line bundle $A$ induces an ample orbifold line bundle $\otimes_{g \in G}g^* A$ on $X/G$. Thus $X/G$ is projective.
 Conversely, the embedding $X/G$ into some projective space induces an orbifold K\"ahler form on $X/G$ (as constructed in Lemma 1) such that the pullback of this K\"ahler form is valued in $H^{1,1}(X) \cap H^2(X,\Z)$.
\end{proof}
Notice that the corollary gives an effective way to construct singular K\"ahler complex space.
The following examples show that in general, a projective variety with quotient singularities is not necessarily a global quotient.
\begin{myex}
{\em
We first recall some basic results about weighted projective space.
For more information, we refer to \cite{Dol82}.
A subvariety $X$ of some weighted projective space $\P$ of dimension $N$ is called quasi smooth if $\pi^{-1}(X)$ is smooth in $\C^{N+1} \setminus \{0\}$ with natural projection $\pi:\C^{N+1} \setminus \{0\} \to \P$ which we always assume in the following.
Let $C(X)$ be the closure of $\pi^{-1}(X)$ in $\C^{N+1}$.
Notice that $X$ is not necessarily smooth if it is quasi-smooth since $X \simeq C(X) \setminus \{0\}/\C^*$ where the $\C^*-$action may have fixed points.
But by Theorem 3.1.6 of \cite{Dol82}, $X$ is a complex orbifold.
By Lemma 3.2.2 \cite{Dol82}, if the dimension of $X$ is bigger than two, $C(X) \setminus \{0\}$ is simply connected.
Since the singular part of $X$ is of codimension at least 2 which implies that $\pi^{-1}(X_{\mathrm{sing}})$ is of codimension at leat 2 in smooth manifold $C(X) \setminus \{0\}$, $\pi^{-1}(X_{\mathrm{reg}})$ is also simply connected.
By homotopy exact sequence of the fibration $\pi: \pi^{-1}(X_{\mathrm{reg}}) \to X_{\mathrm{reg}}$,
$X_{\mathrm{reg}}$ is simply connected.

We claim that if $X$ is not smooth, $X$ is not isomorphic to $Y/H$ for some $Y$ smooth manifold with holomorphic non effective in codimension 1 action of some finite group $H$.
Otherwise, denote $p: Y \to X$ the induced quotient map.
The quotient map $p$ is an unramified cover over $X_{\mathrm{reg}}$ of order equal to the caradinal of $H$.
Since $X_{\mathrm{reg}}$ is simply connected, $H$ has to be trivial.
It implies that $X$ is smooth. Contradiction.
}
\end{myex}
One can construct more K\"ahler complex space by the work of \cite{GH10}.

\begin{myprop}
Let $\cF$ be a reflexive sheaf over a compact orbifold $X$.
Then $\cF$ is stable (resp. semistable) with respect to all orbifold subsheaves if and only if $\cF$ is stable (resp. semistable).
\end{myprop}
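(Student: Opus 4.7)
The plan is to prove the two implications separately, focusing on the semistable case; the stable case follows by keeping strict inequalities and restricting attention to saturated subsheaves of strictly smaller rank.

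For the easy direction, ordinary semistability implies semistability against orbifold subsheaves, I would proceed by descent. Given an orbifold subsheaf $\tilde{\cG} = (\tilde{\cG}_\alpha)_\alpha$ of $\cF$, form the invariant pushforwards $(\pi_{\alpha,\ast}\tilde{\cG}_\alpha)^{G_\alpha}$ on each $U_\alpha$. These glue to a coherent subsheaf $\cG \subset \cF$ on $X$ via the compatibility data of Proposition 1. Since $G_\alpha$ is small, $\pi_\alpha$ is \'etale in codimension one, so $\rk(\cG) = \rk(\tilde{\cG})$, and both first Chern classes agree after restriction to $X_{\reg}$; Lemma 2 promotes this agreement to an equality on all of $X$. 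Hence $\mu(\cG) = \mu(\tilde{\cG})$, and the assumed inequality $\mu(\cG) \le \mu(\cF)$ transfers.

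For the substantive direction, I propose the dual construction. Given a coherent subsheaf $\cG \subset \cF$, define
\[
\tilde{\cG}'_\alpha := \mathrm{Im}\bigl(\pi_\alpha^* \cG \longrightarrow \pi_\alpha^* \cF / \tors = \tilde{\cF}_\alpha\bigr)
\]
on each local ramified smooth cover. The identity $\pi_\alpha \circ g = \pi_\alpha$ for every $g \in G_\alpha$ makes $\pi_\alpha^* \cG$ canonically $G_\alpha$-equivariant, hence so is its image $\tilde{\cG}'_\alpha$. Compatibility on overlaps is ensured by the cartesian diagram of Proposition 1, so these assemble into a bona fide orbifold subsheaf $\cG' \subset \cF$. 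Ranks agree, i.e.\ $\rk(\cG') = \rk(\cG)$, and by restricting to $\pi_\alpha^{-1}(X_{\reg})$ where $\pi_\alpha$ is \'etale, the first Chern classes also agree; Lemma 2 again extends this across the singular locus, which has codimension at least $2$. Therefore $\mu(\cG') = \mu(\cG)$, and the hypothesis $\mu(\cG') \le \mu(\cF)$ (resp.\ $<$) gives the required inequality for $\cG$. For the stable case, one reduces to $\cG$ saturated of rank strictly less than $\rk(\cF)$, and then $\cG'$ is automatically a proper orbifold subsheaf.

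The main obstacle I anticipate is the Chern class computation underlying both directions: passing to the image modulo torsion in $\pi_\alpha^* \cF$ could a priori alter $c_1$, but the torsion is supported on the ramification locus of $\pi_\alpha$, which has codimension at least $2$ by the Chevalley--Shephard--Todd reduction to small groups. Combined with the isomorphism $H^2(X,\Q) \simeq H^2(X_{\reg},\Q)$ from Lemma 2 (applied both to $X$ and to the local cover $\tilde{U}_\alpha$), this justifies the slope comparison and closes the argument.
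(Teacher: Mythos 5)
Your proposal is correct and follows essentially the same route as the paper: for an ordinary subsheaf $\cG$ one passes to the image of $\pi_\alpha^*\cG$ in $\pi_\alpha^*\cF/\tors$, and for an orbifold subsheaf $(\cG_\alpha)_\alpha$ one takes the invariant pushforward $(\pi_{\alpha,*}\cG_\alpha)^{G_\alpha}\subset\cF$, comparing slopes in both cases through an isomorphism in codimension one (equivalently, restriction to the regular part together with the codimension-two bound on the singular/ramification locus). The only cosmetic difference is that the paper phrases the slope comparison via equality of (orbifold) determinant line bundles rather than via Lemma 2, but the content is the same.
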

\begin{proof}
Let $\cG$ be a subsheaf of $\cF$. There exists a natural morphism $(\pi_\alpha^* \cG/\tors)_\alpha \to (\pi_\alpha^* \cF/\tors)_\alpha$ of orbifold sheaves.
However, it is not necessarily injective.
The image is a subsheaf of $(\pi_\alpha^* \cF/\tors)_\alpha$ which is isomorphic to $(\pi_\alpha^* \cG/\tors)_\alpha$ on the preimages of regular part.
In particular, the image has the same determinant orbifold line bundle as $(\pi_\alpha^* \cG/\tors)_\alpha$.
Hence they have the same first Chern class.
If $\cF$ is stable (resp. semistable) with respect to all orbifold subsheaves, it is stable (resp. semistable). 

Conversely, let $(\cG_\alpha)_\alpha$ be an orbifold subsheaf of  $(\pi_\alpha^* \cF/\tors)_\alpha$. 
$(\pi_{\alpha,*} \cG_\alpha)^{G_\alpha}$ is a subsheaf of $ \cF \simeq (\pi_{\alpha,*} \pi^*_\alpha \cF/\tors)^{G_\alpha}$.
The natural morphism
$$\pi_\alpha^* ((\pi_{\alpha,*} \cG_\alpha)^{G_\alpha})/\tors \to \pi_\alpha^* \pi_{\alpha,*} \cG_\alpha /\tors \to \cG_\alpha$$
is isomorphic in codimension 1 and both sides have the same orbifold determinant line bundle.
If $\cF$ is stable (resp. semistable), $\cF$ is stable (resp. semistable) with respect to all orbifold subsheaves.
\end{proof}
Now, notice that all tools of PDE theory work for a compact K\"ahler orbifold with suitable modifications (e.g. Sobolev inequality, heat kernel estimate etc.).
In particular, we have the following result.
\begin{myprop}
Let $E$ be an orbifold vector bundle over a compact K\"ahler complex space $(X, \omega)$ with quotient singularity.
Assume that $E$ viewing as an orbifold sheaf is $\omega-$polystable for all orbifold subsheaves.
Then the Bogomolov inequality for $E$ holds.
\end{myprop}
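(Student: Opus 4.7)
The plan is to reduce Bogomolov's inequality for an orbifold polystable bundle to the classical pointwise computation on each local ramified smooth cover, via an orbifold Hermite--Einstein metric. The key inputs are (a) the orbifold Kobayashi--Hitchin correspondence of Faulk \cite{Fau19}, which applies precisely because the polystability is assumed with respect to all orbifold subsheaves (this hypothesis is exactly what $\omega$-polystability in the orbifold sense means, and it is equivalent by Proposition 5 to ordinary polystability), and (b) the fact that on each ramified chart one may perform the usual curvature computation of L\"ubke--Kobayashi.

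First I would invoke Faulk's theorem to produce a smooth orbifold Hermitian metric $h$ on $E$ such that the orbifold Chern curvature $\Theta(E,h)$ satisfies the Hermite--Einstein equation
\[
i\Lambda_\omega \Theta(E,h)=\lambda\,\Id_E
\]
over each local ramified smooth cover $\tilde U_\alpha$, with $\lambda$ the slope constant determined by $\deg_\omega(E)$ and $\rk(E)$. The metric is $G_\alpha$-invariant by construction, so its Chern--Weil forms descend to global orbifold $(p,p)$-forms representing the orbifold Chern classes $c_i(E)\in H^{2i}(X,\R)$ of Definition 7; this compatibility can be checked on $X_{\reg}$ using the isomorphism $H^2(X,\R)\simeq H^2(X_{\reg},\R)$ from Lemma 2 and the analogous isomorphism on $\P(E)$, so that the Chern--Weil class and the Leray--Hirsch class agree.

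Next I would carry out the pointwise Bogomolov identity on each $\tilde U_\alpha$. Writing $\Theta_0=\Theta(E,h)-\tfrac{\lambda}{n}\omega\otimes\Id_E$ for the trace-free part of the curvature (in the endomorphism direction), a direct computation on the smooth manifold $\tilde U_\alpha$ gives
\[
\bigl(2r\,c_2(E,h)-(r-1)c_1(E,h)^2\bigr)\wedge\omega^{n-2} \;=\; C_n\,|\Theta_0|^2\,\omega^n \;\geq\; 0,
\]
where $C_n>0$ is an explicit constant and $|\cdot|$ is the norm induced by $h$ and $\omega$; this is the standard Kobayashi--L\"ubke pointwise identity, valid because the Hermite--Einstein equation holds on the cover. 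The form on the left is $G_\alpha$-invariant and patches to a global nonnegative top-degree orbifold form.

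Finally I integrate over $X$ using the orbifold volume form (equivalently, divide by $|G_\alpha|$ on each chart), obtaining
\[
\int_X \bigl(2r\,c_2(E)-(r-1)c_1(E)^2\bigr)\wedge\omega^{n-2}\;\geq\;0,
\]
which is Bogomolov's inequality. The main obstacle is the clean verification of step one, i.e.\ confirming that Faulk's orbifold Kobayashi--Hitchin correspondence applies under our hypothesis and that the resulting Chern--Weil classes coincide with the orbifold Chern classes of Definition 7; the former is contained in \cite{Fau19} once Proposition 5 is invoked, and the latter is a routine check on $X_{\reg}$ together with the Leray--Hirsch presentation, so no serious new difficulty arises beyond citing these ingredients correctly.
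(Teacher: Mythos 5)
There is a genuine gap at your very first step. The hypothesis is that $(X,\omega)$ is a compact K\"ahler \emph{complex space} with quotient singularities, so $\omega$ is a K\"ahler form in the sense of Definition 2 (a restriction of an ambient smooth form with strictly psh local potentials), \emph{not} an orbifold K\"ahler form. Its pullback to a local ramified smooth cover $\tilde U_\alpha$ is only semi-positive and in general degenerates at points lying over $X_{\sing}$: for instance for $\C^2/\langle\pm 1\rangle\hookrightarrow\C^3$ via $(u,v)\mapsto(u^2,v^2,uv)$ the differential of the composition of quotient and embedding vanishes at the origin, so $\pi_\alpha^*\omega$ is degenerate there. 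This is exactly why Lemma 1 is not a tautology and its proof has to add a correction $\varepsilon\, i\partial\bar\partial(\sum_\alpha i_\alpha^*\theta_\alpha\psi_\alpha)$ to obtain strict positivity on the covers. Consequently you cannot ``invoke Faulk's theorem to produce an $\omega$-Hermite--Einstein orbifold metric'': Theorem 4.1 of \cite{Fau19} requires a genuine orbifold K\"ahler metric, and your subsequent pointwise Kobayashi--L\"ubke identity also needs a nondegenerate $\omega$ on each cover to make sense of $\Lambda_\omega$ and of $\wedge\,\omega^{n-2}$ as a positive pairing.

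The repair is the approximation step that the paper actually uses. From the proof of Lemma 1 one gets orbifold K\"ahler forms $\omega_\varepsilon=\omega+\varepsilon\, i\partial\bar\partial(\sum_\alpha i_\alpha^*\theta_\alpha\psi_\alpha)$ lying in the same (singular) cohomology class as $\omega$; since slopes of orbifold subsheaves only depend on the class, $E$ remains $\omega_\varepsilon$-polystable. One then applies Faulk's orbifold Kobayashi--Hitchin correspondence with $\omega_\varepsilon$, runs your steps (ii) and (iii) with $\omega_\varepsilon$ in place of $\omega$ to get $\int_X\bigl(2r\,c_2(E)-(r-1)c_1(E)^2\bigr)\wedge\omega_\varepsilon^{\,n-2}\geq 0$, and concludes for $\omega$ because $[\omega_\varepsilon]=[\omega]$, so the integral is unchanged. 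With this insertion the rest of your argument (the pointwise Kobayashi--L\"ubke identity on the covers, $G_\alpha$-invariance and descent, integration, and the identification of the Chern--Weil representatives with the orbifold Chern classes of Definition 7 via $H^2(X,\R)\simeq H^2(X_{\reg},\R)$ and Leray--Hirsch) is sound and is precisely the content packaged in the paper's citation of the Hermite--Einstein existence theorem.
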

\begin{proof}
By the proof of Lemma 1, there exists a sequence of orbifold K\"ahler forms $\omega_\varepsilon$ converging to $\omega$ in the topology of ``orbifold" smooth forms and in the same singular cohomology class.
In particular, since $E$ is $\omega-$polystable, 
$E$ is also $\omega_\varepsilon-$polystable.
By Theorem 4.1 of \cite{Fau19},
for any $\varepsilon>0$ small enough such that $\omega_\varepsilon$ is an ``orbifold" K\"ahler form,
the orbifold version of Uhlenbeck-Yau theorem shows the existence of $\omega_\varepsilon-$Hermitian-Einstein metric.
In particular,
 the Bogomolov inequality for $E$ with respect to $\omega_\varepsilon$ holds.
Since $\omega_\varepsilon$ are in the same singular cohomology class of $\omega$,
 the Bogomolov inequality for $E$ with respect to $\omega$ also holds.
\end{proof}
\section{Regularisation on compact complex orbifold}
In this section, we give the variants of Demailly's regularisation on compact complex orbifolds.

We start with the regularisation of almost positive orbifold current by orbifold smooth forms.
Let $X$ be a compact complex orbifold.
We can define the fibre-holomorphic part of the exponential map
$$\exph: T_X \to X$$
as in \cite{Dem21}.
We briefly recall the construction.
Let $h$ be a (real analytic) Hermitian orbifold metric on $X$.
(The existence can be shown for example by viewing $X$ as a real analytic orbifold by \cite{Kan13}.)
Consider the exponential map associated with the Chern connection of the metric $h$ (cf. \cite{CR02} for definition of exponential map of a compact orbifold).
It is an orbifold morphism between $T_X$ and $X$ which is real analytic.
We define the fibre-holomorphic part of the exponential map $\exph$ which is uniquely defined on a tubular neighbourhood of the zero section of $T_X$.

Now as in \cite{Dem94}, we have the following regularisation theorem in the context of orbifold.
\begin{mythm}
Let $T$ be a closed almost positive $(1,1)-$orbifold current over a compact complex orbifold $X$ and
let $\alpha$ be a smooth real $(1,1)-$orbifold form in the same $\d \dbar-$cohomology class as $T$, i.e.
$T = \alpha + i \d \dbar \Psi$ where $\Psi$ is an almost psh orbifold function (i.e. almost psh on the local ramified smooth cover). Let $\gamma$ be a continuous real
$(1,1)-$orbifold form such that $T\geq \gamma$. Suppose that $T_X$ is equipped with a smooth Hermitian orbifold
metric $\omega$ such that the orbifold Chern curvature form $\Theta(T_X)$ satisfies
$$(\Theta(T_X ) + u \otimes \Id_{T_X}
)(\theta \otimes \xi,\theta \otimes \xi)  \geq 0, \forall \theta, \xi \in T_X \; \mathrm{with} \;\langle \theta, \xi \rangle= 0,$$
for some continuous nonnegative $(1,1)-$orbifold form $u$ on $X$. Then there is a family of
closed almost positive (1,1)-orbifold forms $T_\varepsilon=\alpha+i \d \dbar \psi_\varepsilon$, such that $\psi_\varepsilon, \forall \varepsilon \in ]0, \varepsilon_0[$ is orbifold
smooth over $X$, increases with $\varepsilon$, and converges to $\Psi$ as $\varepsilon$ tends to 0 (in particular,
$T_\varepsilon$ is orbifold smooth and converges weakly to $T$ on $X$), and such that

(i) $T_\varepsilon \geq \gamma -\lambda_\varepsilon u-\delta_\varepsilon \omega$ where:

(ii) $\lambda_\varepsilon (x)$ is an increasing family of continuous functions on $X$ such that
$\lim_{\varepsilon \to 0} \lambda_\varepsilon (x) = \nu(T,x)$ (Lelong number of $T$ at $x$, defined on the local ramified smooth cover) at every point,

(iii) $\delta_\varepsilon$ is an increasing family of positive constants such that $\lim_{\varepsilon \to 0} \delta_\varepsilon = 0$.

On the other hand, we have the following singularity attenuation process.
For every $c > 0$, there
is a family of closed almost positive (1,1)-orbifold currents $T_{c,\varepsilon }= \alpha + i\d \dbar \psi_{c,\varepsilon}, \varepsilon \in ]0,\varepsilon_0[$,
such that $\psi_{c,\varepsilon}$ is orbifold smooth on $X \setminus E_c(T)$, increasing with respect to $\varepsilon$, and converges
to $\Psi$ as $\varepsilon$ tends to 0 (in particular, the current $T_{c,\varepsilon}$ is orbifold smooth on $X \setminus E_c(T)$ and
converges weakly to $T$ on $X$), and such that

(i) $T_{c,\varepsilon}\geq \gamma-\min(\lambda_\varepsilon,c)u -\delta_\varepsilon \omega$ where:

(ii) $\lambda_\varepsilon(x)$ is an increasing family of continuous functions on $X$ such that
$\lim_{\varepsilon \to 0} \lambda_\varepsilon (x) = \nu(T,x)$

(iii) $\delta_\varepsilon$ is an increasing family of positive constants such that $\lim_{\varepsilon \to 0} \delta_\varepsilon = 0$,

(iv) $\nu(T_{c,\varepsilon},x) = (\nu(T,x) -c)_+$ at every point $x \in X$.
\end{mythm}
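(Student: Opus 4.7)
The plan is to transpose Demailly's classical regularisation argument (from \cite{Dem94} or \cite{Dem92}) to the orbifold setting, performing the construction globally on $X$ via the orbifold exponential map $\exph: T_X \to X$ and checking that all local estimates persist on the ramified smooth covers. First I would fix a real analytic orbifold Hermitian metric $h$ on $T_X$ (existence is guaranteed since $X$ is a real analytic orbifold by \cite{Kan13}). Following \cite{Dem94}, I define
$$\psi_\varepsilon(z) := \frac{1}{c_n \varepsilon^{2n}} \int_{\zeta \in T_{X,z}} \Psi\bigl(\exph_z(\varepsilon \zeta)\bigr)\, \chi(|\zeta|_h^2)\, dV_h(\zeta),$$
where $\chi$ is a standard smooth cut-off with $\int \chi \, dV = c_n$. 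On each local ramified cover $\tilde{U}_\alpha$ the function $\Psi$ is $G_\alpha$-invariant and almost psh in the usual sense, the metric $h$ restricts to a $G_\alpha$-invariant Hermitian metric on the tangent bundle of $\tilde{U}_\alpha$, and the construction of $\exph$ is functorial under local isometries, so $\exph|_{\tilde{U}_\alpha}$ is $G_\alpha$-equivariant. Averaging against the $G_\alpha$-invariant radial cut-off then produces a $G_\alpha$-invariant smooth function on $\tilde{U}_\alpha$, which exactly means $\psi_\varepsilon$ is an orbifold smooth function on $X$.

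The key step is to verify property (i), i.e. the curvature lower bound. This is a computation entirely local on each $\tilde{U}_\alpha$: it only uses the chain rule for $i \ddbar$ applied through $\exph$ together with the hypothesis on $\Theta(T_X) + u \otimes \Id_{T_X}$. Since $\exph$, its derivatives, and the curvature condition are all defined at the level of the ramified smooth cover, the calculation in \cite{Dem94} (which produces the terms $-\lambda_\varepsilon u$ and $-\delta_\varepsilon \omega$, with $\delta_\varepsilon \to 0$ controlled by the oscillation of the metric and of $u$ on $\varepsilon$-balls) goes through verbatim on $\tilde{U}_\alpha$, and hence as an inequality of orbifold $(1,1)$-forms on $X$. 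The monotonicity of $\psi_\varepsilon$ in $\varepsilon$ and the pointwise convergence $\psi_\varepsilon \downarrow \Psi$ as $\varepsilon \to 0$ follow from the sub-mean-value property on $\tilde{U}_\alpha$, which is just the usual plurisubharmonic sub-mean-value property; the Lelong number identity (ii) is likewise local on $\tilde{U}_\alpha$ and follows from the definition of $\nu(T,x)$ via the ramified smooth cover.

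For the singularity attenuation part, I would apply a Kiselman--Legendre transform
$$\psi_{c,\varepsilon}(z) := \sup_{t \in [\log \varepsilon, \log \varepsilon_0]} \Bigl(\psi_{e^t}(z) - c(t - \log \varepsilon) + K(e^{2t} - \varepsilon^2)\Bigr),$$
with a suitable constant $K > 0$, followed by an upper regularised sup-convolution, exactly as in \cite{Dem94}; the modification is to truncate Lelong numbers by $c$ while preserving the required almost-positivity. Because the Kiselman--Legendre construction is a pointwise operation on the family $\{\psi_\varepsilon\}$ and preserves $G_\alpha$-invariance fibrewise, it descends to an orbifold function. The identity $\nu(T_{c,\varepsilon}, x) = (\nu(T,x) - c)_+$ is a pointwise statement on the ramified cover whose proof in the manifold case carries over.

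The main obstacle is conceptual rather than computational: ensuring that the globally defined $\psi_\varepsilon$ is really an orbifold function, i.e.\ that the averaging descends compatibly through the diagram of Proposition~1. This reduces to the equivariance of $\exph$ under the local isotropy action, which in turn follows from choosing $h$ to be an orbifold metric (hence $G_\alpha$-invariant on each $\tilde{U}_\alpha$) and from the functoriality of the Chern connection. Once this is established, all the quantitative estimates of \cite{Dem94} are inherited directly from the local ramified smooth cover, since the curvature hypothesis, the Lelong numbers, and the forms $\gamma, u, \omega$ are defined by orbifold-local data.
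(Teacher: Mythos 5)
Your overall strategy coincides with the paper's: build the fibre-holomorphic exponential map $\exph$ from a real analytic orbifold Hermitian metric (via \cite{Kan13}, \cite{CR02}, \cite{Dem21}), regularise $\Psi$ by averaging along $\exph$ over $\varepsilon$-balls in $T_{X,z}$, check $G_\alpha$-equivariance so that the result descends to an orbifold smooth function, and then invoke the estimates of Theorem 4.1 of \cite{Dem94} for (i)--(iii) and the Kiselman-type attenuation of Theorem 6.1 of \cite{Dem94}, \cite{Kis79} for the second half. This is exactly the paper's proof, with the same level of reliance on Demailly's local computations.

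Two slips in your write-up, one of which would break a conclusion if followed literally. First, the mollifier is misnormalised: after the substitution $\xi=\varepsilon\zeta$ the factor $\varepsilon^{-2n}$ must disappear, so the mean value is $\psi_\varepsilon(z)=\frac{1}{c_n}\int_{\zeta\in T_{X,z}}\Psi(\exph(z,\varepsilon\zeta))\,\chi(|\zeta|_h^2)\,dV_h(\zeta)$ (equivalently the paper's unscaled formula); as displayed, your $\psi_\varepsilon$ differs from a weighted mean of $\Psi$ by $\varepsilon^{-2n}$. More seriously, the Kiselman--Legendre transform must be an \emph{infimum}, not a supremum: Kiselman's minimum principle applies to $\inf$ over the real parameter of a family that is psh in $(z,t)$ and convex in $\log t$, and it is an infimum of the shape $\inf_{t\in]0,\varepsilon]}\bigl(\psi_t(z)+Kt^2-K\varepsilon^2-c\log(t/\varepsilon)\bigr)$ that truncates Lelong numbers at level $c$. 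With your supremum, $\psi_{c,\varepsilon}$ is bounded below by the smooth member of the family at $t=\log\varepsilon_0$ minus a constant, hence is locally bounded; all its Lelong numbers vanish, so conclusion (iv), $\nu(T_{c,\varepsilon},x)=(\nu(T,x)-c)_+$, fails wherever $\nu(T,x)>c$, and $\psi_{c,\varepsilon}$ would not converge to $\Psi$ either. Since you defer to \cite{Dem94} the fix is immediate, but the formula as written is not the one that works.
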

\begin{proof}
Select a cut-off function $\chi: \R \to \R$ of
class $C^\infty$ such that
$\chi(t) > 0$ for $t < 1$, $\chi(t) = 0$ for $t \geq 1$,
$\int_{v \in \C^n}
\chi(|v|^2) d \lambda(v) = 1$.

Define the regularisation of $\Psi$ by
$$\psi_\varepsilon(z):=\frac{1}{\varepsilon^{2n}} \int_{\xi \in T_{X,z}} \Psi(\exph(z,\xi)) \chi(\frac{|\xi|^2}{\varepsilon^2}) d \lambda(\xi)$$
which is well defined for $\varepsilon>0$ small enough.

The same careful calculation as in the proof of Theorem 4.1 of \cite{Dem94} shows the first result.
The singularity attenuation process is the same as Theorem 6.1 of \cite{Dem94} following the ideas of \cite{Kis79}.
\end{proof}
We have also the regularisation by currents with analytic singularities as shown as follows.
\begin{mythm}
Let $\varphi$ be an almost psh orbifold function on a compact complex orbifold
$X$ such that 
$i \d \dbar \varphi \geq \gamma$ for some continuous (1,1)-orbifold form $\gamma$. Then there is a sequence of
almost psh orbifold functions $\varphi_m$ such that $\varphi_m$ has the same singularities (on the local ramified smooth cover) as a logarithm of a sum
of squares of holomorphic functions and a decreasing sequence $\varepsilon_m > 0$ converging to 0
such that

(i) $\varphi(x) \leq \varphi_m(x) \leq \sup_{|\xi-x|<r}
\varphi(\xi) + C
(|\log r|/
m + r + \varepsilon_m
)$
with respect to ramified smooth coordinate open sets covering $X$. In particular, $\varphi_m$ converges to $\varphi$
pointwise and in $L^1(X)$ and

(ii) $\nu(\varphi,x) -\frac{n}{m} \leq \nu(\varphi_m,x) \leq \nu(\varphi,x)$ for every $x \in X$;

(iii)$i \d \dbar \varphi_m \geq \gamma -\varepsilon_m \omega$ for some fixed smooth Hermitian orbifold metric $\omega$.
\end{mythm}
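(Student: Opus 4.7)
The plan is to imitate the classical Demailly approximation procedure, executed on local ramified smooth covers and made $G_\alpha$-invariant so that it descends to the orbifold. Fix a finite cover $(U_\alpha)$ of $X$ by orbifold charts with $U_\alpha\simeq \tilde{U}_\alpha/G_\alpha$, $\tilde{U}_\alpha\subset\C^n$ euclidean and $G_\alpha\subset GL_n(\C)$ small, and let $\tilde{\varphi}_\alpha$ denote the $G_\alpha$-invariant almost psh lift of $\varphi$ to $\tilde{U}_\alpha$. After shrinking, choose $\tilde{U}'_\alpha\Subset\tilde{U}_\alpha$ so that $U'_\alpha=\tilde{U}'_\alpha/G_\alpha$ still cover $X$, and assume $\tilde{\varphi}_\alpha$ is defined on a neighborhood of $\overline{\tilde{U}_\alpha}$.

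On each chart form the Hilbert space $\mathcal{H}_{m,\alpha}$ of holomorphic functions $f$ on $\tilde{U}_\alpha$ with $\int |f|^2 e^{-2m\tilde{\varphi}_\alpha}\,d\lambda<\infty$. Because the weight and the Lebesgue measure are $G_\alpha$-invariant, $G_\alpha$ acts unitarily on $\mathcal{H}_{m,\alpha}$ and the Bergman kernel $K_{m,\alpha}(z)=\sup\{|f(z)|^2:\|f\|\le 1\}$ is $G_\alpha$-invariant. Set $\tilde{\varphi}_{m,\alpha}:=\frac{1}{2m}\log K_{m,\alpha}$, which descends to a function $\varphi_{m,\alpha}$ on $U'_\alpha$ with singularities of a logarithm of a sum of squares of holomorphic functions on the cover. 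The classical Demailly argument, running verbatim on the smooth manifold $\tilde{U}_\alpha$ via Ohsawa--Takegoshi and H\"ormander $L^2$ estimates, provides on $\tilde{U}'_\alpha$ the two-sided bound $\tilde{\varphi}_\alpha(z)\leq \tilde{\varphi}_{m,\alpha}(z)\leq \sup_{|\xi-z|<r}\tilde{\varphi}_\alpha(\xi)+C(|\log r|/m+r)$, the Lelong number comparison $\nu(\tilde{\varphi}_\alpha,x)-n/m\leq \nu(\tilde{\varphi}_{m,\alpha},x)\leq \nu(\tilde{\varphi}_\alpha,x)$, and a curvature estimate $i\d\dbar\tilde{\varphi}_{m,\alpha}\geq \gamma-\varepsilon_{m,\alpha}\omega$ with $\varepsilon_{m,\alpha}\to 0$.

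The global step is Demailly's gluing via the regularised maximum $\max_\eta$. Choose a further shrinking $U''_\alpha\subset U'_\alpha$ still covering $X$, and constants $A_{m,\alpha}$ so that on the overlap $U'_\alpha\cap U'_\beta$ the function $\varphi_{m,\alpha}+A_{m,\alpha}$ dominates $\varphi_{m,\beta}+A_{m,\beta}$ near $\d U'_\alpha$ and is dominated by it near $\d U'_\beta$; this is possible because the two-sided bound in (i), applied on both charts, forces $|\tilde{\varphi}_{m,\alpha}-\tilde{\varphi}_{m,\beta}|=O(1/m+\mathrm{dist})$ after pulling back to any common smooth cover. Since $\max_\eta$ is smooth and symmetric in its arguments, gluing produces $\tilde{\varphi}_m$ with all the desired properties on each $\tilde{U}''_\alpha$ and the decay $\varepsilon_m:=\max_\alpha\varepsilon_{m,\alpha}+O(1/m)\to 0$.

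The main obstacle is the orbifold compatibility of this gluing, namely that the local regularised maxima actually define a single function on $X$. This reduces, by Proposition~1, to verifying that the pullbacks of $\tilde{\varphi}_{m,\alpha}$ and $\tilde{\varphi}_{m,\beta}$ to the common smooth cover $\tilde{V}_{\alpha,\beta}$ are both invariant under $G_\alpha\times G_\beta$. For the $\beta$-factor on $\tilde{\varphi}_{m,\alpha}$ (and symmetrically) invariance is immediate from $G_\alpha$-invariance on $\tilde{U}_\alpha$ and equivariance of $\tilde{V}_{\alpha,\beta}\to\tilde{U}_\alpha$; the non-trivial part is invariance of the pullback of $\tilde{\varphi}_{m,\alpha}$ under $G_\beta$, which follows because $\tilde{V}_{\alpha,\beta}\to\tilde{U}_\alpha$ is a $G_\beta$-principal bundle over its image and the pullback of a function is constant along the fibres. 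Once this compatibility is established the regularised max descends to a well-defined almost psh orbifold function $\varphi_m$ on $X$, and (i), (ii), (iii) follow from the local estimates together with the standard properties of $\max_\eta$.
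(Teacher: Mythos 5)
Your overall strategy (equivariant local Bergman approximation followed by a gluing) belongs to the same family of ideas as the paper's proof, but two steps do not work as written. First, your choice of gluing constants $A_{m,\alpha}$ rests on the claim that $|\tilde{\varphi}_{m,\alpha}-\tilde{\varphi}_{m,\beta}|=O(1/m+\mathrm{dist})$ on overlaps ``because of the two-sided bound in (i)''. This does not follow: the upper bound in (i) involves $\sup_{|\xi-x|<r}\varphi(\xi)$, and $\sup_{|\xi-x|<r}\varphi(\xi)-\varphi(x)$ is unbounded near the poles of $\varphi$, so (i) gives no uniform comparison of two local approximations on an overlap. In Demailly's argument, and in the paper's proof, the overlap estimate (the displayed inequality $|w_{j,m}-w_{k,m}|\leq C_{j,k}(\log\delta^{-1}/m+\varepsilon(\delta)\delta^2)$, up to the normalisation by $m$) is proved by transferring the extremal function of one Bergman space into the other by an Ohsawa--Takegoshi/H\"ormander $\dbar$-argument, and it uses crucially that the two weights differ by $q_j-q_k$, whose oscillation on a ball of radius $O(\delta)$ is $O(\varepsilon(\delta)\delta^2)$ because the $i\d\dbar q_j=\gamma_j$ are constant-coefficient approximations of $\gamma$. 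With your fixed-size charts and the raw weight $e^{-2m\tilde{\varphi}_\alpha}$ there is no such decaying discrepancy, so the comparison error does not tend to $0$ and the requirement $\varepsilon_m\to 0$ in (i) is lost.

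Second, taking the weight to be $m\tilde{\varphi}_\alpha$ itself cannot give (iii). Since $\varphi$ is only almost psh, the $L^2$ machinery does not run ``verbatim''; and even granting the construction, $\frac{1}{2m}\log K_{m,\alpha}$ is plurisubharmonic, so you only obtain $i\d\dbar\tilde{\varphi}_{m,\alpha}\geq 0$, which is weaker than $\geq\gamma-\varepsilon_m\omega$ wherever $\gamma$ is positive. This is precisely why the paper (following Proposition 3.9 of \cite{Dem92}) works with balls of radius $\delta\to 0$ inside the orbifold charts, subtracts the quadratics $q_j$ with $i\d\dbar q_j=\gamma_j$ before forming the Bergman approximation and restores them afterwards: this simultaneously produces (iii) and the vanishing gluing error. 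As for the orbifold aspect, which you single out as the main obstacle, your invariance bookkeeping via $\tilde{V}_{\alpha,\beta}$ is correct but not the crux; once each $\varphi_{m,\alpha}$ descends to the open set $U'_\alpha\subset X$, the gluing takes place on $X$ itself. Note also that the paper does not need invariant balls: it uses the equivariance of the Bergman construction ($g^*$ of the approximation is the approximation of $g^*$), a supremum over group translates in the correction terms, $G_\nu$-invariant cutoffs $\theta_j$, and glues by $\varphi_m=\frac{1}{2m}\log\bigl(\sum_j\theta_j^2 e^{\tilde{w}_{j,m}}\bigr)$ rather than by a regularised maximum.
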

\begin{proof}
The construction is similar to the case of the manifold as shown in Proposition 3.9 of \cite{Dem92}.
Since the construction is not canonical, some modifications are needed to ensure the approximation function is invariant under the local action of the finite group.
We give the outline of the construction and the verification of the properties is almost identical to the case of manifolds which shall be omitted.

We select a finite covering $(W_\nu/G_\nu)$ of $X$ with open coordinate charts $W_\nu$ and finite groups $G_\nu \subset U(n)$. Given $\delta > 0$, we
take in each $W_\nu$ a maximal family of points with (coordinate) distance to the boundary $\geq 3 \delta$ and mutual distance $\geq \delta/2$. 
In this way, we get for $\delta > 0$ small a finite covering
of $X$ by the images of open balls $U'_j$ of radius $\delta$ under the quotient maps, such that
the concentric ball $U_j$ of radius $2 \delta$ is relatively compact in the corresponding chart $W_\nu$.
Notice that $U'_j$ or $U_j$ is not necessarily invariant under the local action.
Since the topology of $X$ is locally given by quotient topology, the image of the small balls is open in $X$.
Let $\tau_j : U_j\to B(a_j,2\delta)$ be the isomorphism given by the coordinates of $W_\nu$. 

Let
$\varepsilon(\delta)$ be a modulus of continuity for the orbifold form $\gamma$ on the sets $U_j$, such that $\lim_{\delta \to 0} \varepsilon(\delta) = 0$ and
$\gamma_x -\gamma_{x'} \leq 2 \varepsilon(\delta) \omega_x$
 for all $x,x' \in U_j$. We denote by $\gamma_j$ the (1,1)-orbifold form with constant
coefficients on $B(a_j,2\delta)$ such that $\tau^*_j \gamma_j$ coincides with $\gamma -\varepsilon(\delta) \omega$ at $\tau^{-1}_
j (a_j)$. 

We set $\varphi_j:=\varphi \circ \tau_j^{-1}$ on $B(a_j,2\delta)$ and let $q_j$ be the unique homogeneous
quadratic function in $z -a_j$ such that $i \d \dbar q_j=\gamma_j$ on $B(a_j,2\delta)$. Finally, we set
$ \psi_j(z) = \varphi_j(z) -q_j(z)$ on $B(a_j,2\delta)$.
Take Bergman approximation $\psi_{j,m}$ of the psh function $\psi_j$.
Notice that we have for any $g \in G_\nu$ (such that $U_j \subset W_\nu$),
$g^*(\psi_{j,m} \circ \tau_j)$ is the Bergman approximation of $g^*(\psi_{j} \circ \tau_j)$.
In particular, $w_{j,m}:=(\psi_{j,m}+q_j) \circ \tau_j$ is invariant under any subgroup $G'_{\nu}$ of $G_\nu$ over $\cap_{g \in G'_\nu} g U_j$.
Thus $w_{j,m}$ descends to a function on the quotient of $U_j$ (i.e. $(\cup_{g \in G_\nu} g U_j)/G_\nu$).

We let $U'_j \subset \subset U''_j \subset \subset U_j$ be concentric balls of radii $\delta, 1.5 \delta, 2\delta$ respectively. 
By almost the same proof of the manifold case, we have that there exist constants $C_{j,k}$ independent of $m$ and $\delta$ such that the almost
psh functions satisfy
$$|w_{j,m}-w_{k,m}| \leq C_{j,k}
(\log \delta^{-1} + m \varepsilon(\delta)\delta^2)$$ on the intersection of the quotient of $U''_j$  with the quotient of $U''_k $.

To glue a global almost psh orbifold function, take $\theta_j$ smooth nonnegative functions with support
in $\cup_{g \in G_\nu} g U''_j$, such that $\theta_j \leq 1$ on $\cup_{g \in G_\nu} g U''_j$ and $\theta_j = 1$ on $U'_j$ which is is invariant under $G_\nu-$action.
Define
$$\tilde{w}_{j,m}(x) = w_{j,m}(x) + 2m
(\frac{C_1}{m} + C_3 \varepsilon(\delta)\sup_{g \in G_\nu, x \in g^{-1}U_j} (\delta^2/2 -|
(\tau_j(g \cdot x)-a_j)|^2)$$
for $C_3$ sufficiently large and 
$C_1>0$ such that $\psi_{j,m} \geq \psi_j -\frac{C_1}{m}$.
Notice that $(\delta^2/2 -|\tau_j(x)-a_j|^2)$ attains strictly minumum on the boundary of $U_j$.
Thus locally the supremum is always taken as the supremum of finite almost psh functions which is thus almost psh.
Since $\tilde{w}_{j,m}$ is defined on $U_j$ and invariant,
it can also be viewed as a function over  $(\cup_{g \in G_\nu} g U_j)/G_\nu$.
Notice that for $x \in \cup_{g \in G_\nu} g U''_j \setminus \cup_{g \in G_\nu} g U'_j$,
$$\tilde{w}_{j,m}(x) \leq \sup_{k \neq j, x \in U'_k} \tilde{w}_{k,m}(x)$$
following the same proof of inequality without taking superimum for any $g \in G_\nu$ as in \cite{Dem92}.
Notice also that the maximum number $N$ of overlapping balls of forms $g U_j$ is bounded by the maximum number of overlapping balls $ U_j$ times the maximal cardinal of $G_{\nu}$.

It is easy to check that Lemma 3.5 of \cite{Dem92} applies in the orbifold setting.
Define
$$\varphi_m :=\frac{1}{2m} \log(\sum_j \theta_j^2 e^{\tilde{w}_{j,m}})$$
which is a almost psh orbifold function and gives the approximation.
\end{proof}
Moreover, we have the following equisingular approximation as in \cite{DPS01}.
The proof is analogous to that of Theorem 6.
\begin{mythm}
Let $\varphi$ be an almost psh orbifold function on a compact complex orbifold
$X$ such that 
$i \d \dbar \varphi \geq \gamma$ for some continuous (1,1)-orbifold form $\gamma$. Then there is a sequence of
almost psh orbifold functions $\varphi_m$ such that $\varphi=\lim_{m \to \infty}\varphi_m$
and

(i) $\varphi_m$ is orbifold smooth in the complement $X \setminus Z_m$
of an analytic set $Z_m \subset X$;

(ii) $\varphi_m$ is a decreasing sequence, and $Z
_m \subset Z_{m+1}$ for all $m$;

(iii)
$\int_X (e^{-2\varphi} - e^{-2\varphi_m} ) dV_\omega$ is finite for every $m$ and converges to 0 as $m \to \infty$;

(iv)
$\cI(\varphi_m)=\cI(\varphi)$
for all $m$ (“equisingularity”) on the local ramified smooth cover;

(v) $i \d \dbar \varphi_m \geq \gamma-\epsilon_m \omega$
where $\lim_{m \to \infty} \epsilon_m=0$.
\end{mythm}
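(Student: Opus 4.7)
The strategy is to adapt, on each local ramified smooth chart $\tilde{U}_j \to U_j$, the equisingular Bergman-type construction of \cite{DPS01}, then average to restore invariance under the local isotropy group $G_\nu$, and finally glue by a regularised-max procedure identical to the one used in the proof of Theorem 6. Since the statement to be proved is cohomological-analytic in nature and all local objects descend from $\tilde{U}_j$, I expect the skeleton of the proof to be in perfect parallel with Theorem 6 above, with only the equisingularity mechanism needing orbifold verification.

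First, I would set up the same covering $(W_\nu/G_\nu)$ and chart data $\tau_j\colon U_j\to B(a_j,2\delta)$ as in the proof of Theorem 6, together with the quadratic correctors $q_j$ so that $\psi_j := \varphi_j - q_j$ is genuinely plurisubharmonic on $B(a_j,2\delta)$. On each $\tilde{U}_j$, I would define $\psi_{j,m}$ as in \cite{DPS01}: rather than the simple Bergman kernel used in Theorem 6, I use the refined construction which approximates $\psi_j$ by $\tfrac{1}{2m}\log\sum|\sigma_{j,m,k}|^2$, where $(\sigma_{j,m,k})_k$ runs over an orthonormal basis of the Hilbert space $\mathcal{H}_{j,m}$ of holomorphic functions $f$ on $\tilde{U}_j$ with $\int_{\tilde{U}_j}|f|^2 e^{-2m\psi_j}\,d\lambda<\infty$. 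Because $\psi_j$ is $G_\nu$-invariant, the Hilbert space $\mathcal{H}_{j,m}$ carries a unitary $G_\nu$-action and the Bergman kernel (hence $\psi_{j,m}$) is automatically $G_\nu$-invariant; averaging is therefore not needed, but could be applied as a safeguard exactly as in Theorem 6. The resulting $w_{j,m} := (\psi_{j,m}+q_j)\circ\tau_j$ then descends to $(\bigcup_{g\in G_\nu}g U_j)/G_\nu$.

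The key equisingularity property $\mathcal{I}(w_{j,m})=\mathcal{I}(\varphi_j)$ on $\tilde{U}_j$ follows from the Ohsawa--Takegoshi $L^2$ extension theorem applied fiber by fiber on the smooth cover, exactly as in \cite{DPS01}: any germ $f\in\mathcal{I}(\varphi_j)_x$ extends to an element of $\mathcal{H}_{j,m}$ with controlled $L^2$ norm, which forces $|f|^2\le C e^{2m w_{j,m}}$ locally and hence $\mathcal{I}(w_{j,m})\subset\mathcal{I}(\varphi_j)$; the reverse inclusion comes from Skoda's $L^2$ division, and both statements are preserved under the $G_\nu$-averaged/invariant construction since they can be checked on the smooth cover $\tilde{U}_j$, where the orbifold quasi-psh function $\varphi$ is literally quasi-psh. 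I would then combine the $w_{j,m}$'s via the same max-type formula as in the proof of Theorem 6,
\[
\varphi_m := \frac{1}{2m}\log\Bigl(\sum_j \theta_j^2\, e^{\tilde w_{j,m}}\Bigr),
\]
with bump corrections $\tilde w_{j,m}$ chosen so that $\varphi_m$ is monotone decreasing in $m$ (this requires taking, at step $m+1$, parameters $\delta_{m+1}\ll\delta_m$ and subtracting a suitable constant, as in \cite{DPS01}), smooth off an analytic set $Z_m$ (the common zero set of the $\sigma_{j,m,k}$), and satisfying $i\partial\bar\partial\varphi_m\ge\gamma-\varepsilon_m\omega$. Properties (i), (ii), (v) are then formal consequences; property (iv) is local on $\tilde{U}_j$ and follows from the local equisingularity $\mathcal{I}(w_{j,m})=\mathcal{I}(\varphi_j)$ together with the observation that the $\log\sum\theta_j^2 e^{\tilde w_{j,m}}$ operation preserves multiplier ideals since the $\theta_j$'s are smooth; property (iii) follows from monotone convergence and the local bound $e^{-2\varphi_m}\le e^{-2\varphi}$ up to a bounded multiplicative factor.

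The main obstacle I anticipate is checking equisingularity (iv) and the monotonicity (ii) \emph{simultaneously} after the invariance-averaging/gluing step: the averaging of the Bergman potential over $G_\nu$ is inoffensive because the $L^2$ space is already $G_\nu$-invariant, but the max-type gluing and the parameter adjustments from $(\delta_m,\varepsilon_m)$ to $(\delta_{m+1},\varepsilon_{m+1})$ must be done coherently across charts so that the inclusion $Z_m\subset Z_{m+1}$ holds globally, not only in one chart. This is handled in \cite{DPS01} by a careful diagonal extraction; in the orbifold setting the same extraction procedure works once one observes that the analytic sets $Z_m$ arise from $G_\nu$-invariant ideals on $\tilde{U}_j$ and therefore descend to honest analytic subsets of $X$. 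With this in place, the rest of the verification is identical to the manifold case and I would omit the routine parts as in the proof of Theorem 6.
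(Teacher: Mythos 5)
Your overall skeleton (work on the local ramified covers $\tilde U_j$, observe that the Bergman-type construction is automatically $G_\nu$-invariant because the weight is, and glue exactly as in the proof of Theorem 6) is indeed what the paper intends, since its proof of Theorem 7 is just ``analogous to Theorem 6'' following \cite{DPS01}. However, there is a genuine gap at the one point where Theorem 7 differs from Theorem 6: the equisingularity (iv). You claim that the Bergman approximants $\psi_{j,m}=\frac{1}{2m}\log\sum_k|\sigma_{j,m,k}|^2$ themselves satisfy $\mathcal{I}(w_{j,m})=\mathcal{I}(\varphi_j)$, with one inclusion from Ohsawa--Takegoshi and the reverse from Skoda division. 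The Ohsawa--Takegoshi bound $\varphi_j\le\psi_{j,m}+C/m$ only gives $\mathcal{I}(\varphi_j)\subset\mathcal{I}(\psi_{j,m})$; the reverse inclusion is simply false for these approximants, and no division argument can repair it. Concretely, for $\varphi=n\log|z|$ on $\C^n$, $n\ge 2$, one has $\mathcal{I}(\varphi)_0=\mathfrak{m}_0$, while the minimal vanishing order in the Hilbert space with weight $e^{-2m\varphi}$ is $n(m-1)+1$, so $\psi_m$ has Lelong number $n-(n-1)/m<n$ at $0$ and $\mathcal{I}(\psi_m)_0=\cO_0\supsetneq\mathcal{I}(\varphi)_0$ for every $m$. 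This is exactly why the statement of Theorem 7 (unlike Theorem 6/Theorem B) does not claim that $\varphi_m$ has analytic singularities: the construction of \cite{DPS01} for the equisingular statement is genuinely different from the Bergman gluing of Theorem 6, and it trades analytic singularities for the equality of multiplier ideals through a more delicate choice of the approximating weights. Reproducing the Theorem 6 construction verbatim, as you propose, produces approximants with analytic singularities which in general violate (iv).

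A secondary but related issue is (iii): from $\varphi_m\ge\varphi$ you only get $0\le e^{-2\varphi}-e^{-2\varphi_m}$, and since $e^{-2\varphi}$ need not be integrable, the bound ``$e^{-2\varphi_m}\le e^{-2\varphi}$ up to a bounded multiplicative factor'' does not give finiteness of $\int_X(e^{-2\varphi}-e^{-2\varphi_m})\,dV_\omega$; in \cite{DPS01} this finiteness is obtained together with (iv) from the specific construction, not from monotone convergence alone. What does survive from your sketch, and what the orbifold statement really requires beyond \cite{DPS01}, is the equivariance discussion: the Hilbert spaces and all auxiliary weights entering the DPS01 construction are canonically attached to the $G_\nu$-invariant data on $\tilde U_j$, hence invariant, so the resulting functions descend and the gluing can be performed as in the proof of Theorem 6, with the analytic sets $Z_m$ descending from invariant ideals. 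So the correct route is: run the actual DPS01 equisingular construction (not the Theorem 6 Bergman approximation) on the charts, check its invariance, and then glue; the equisingularity must be imported from that construction rather than derived from Ohsawa--Takegoshi and Skoda for the Bergman approximants.
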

As consequence, we have also the hard Lefschetz theorem for pseudoeffective orbifold line bundle over compact K\"ahler orbifold as in \cite{DPS01}.
\section{Strongly pseudoeffective orbifold vector bundle}
In this section, we generalise the results of \cite{Wu20} to the case of compact K\"ahler orbifold.

In the definition of the pseudoeffective vector bundle on smooth manifold, an additional condition is made on the approximate singular metrics on the tautological line bundle.
One may ask whether the additional condition could be made directly on some positive singular metric on the tautological line bundle.
For example, let $h$ be a positive singular metric on $\cO_{\P(E)}(1)$, could we consider the condition that the projection of singular set $\{h = \infty\}$ in $X$ is a (complete) pluripolar set?
However, this kind of condition does not behave well functorially as shown below.
In particular, the image of a (complete) pluripolar set under a proper morphism is not necessarily (complete) pluripolar as shown in the following Example 8.
Notice that by Remmert's proper mapping theorem, the image of a closed analytic set under a proper morphism is always (closed) analytic.
The projection of singular set $\{h = \infty\}$ in $X$ could be very bad which forbids the Skoda-El Mir type of extension theorem.

Recall that a subset $F$ of a complex manifold $X$ is said to be pluripolar if for
each point $z \in F$ there is a plurisubharmonic function $u$, $u \neq - \infty$, defined in an open
neighborhood $U$ of $z$, such that $F \cap U \subset \{u = -\infty\}$. 
A subset $F$ of a complex manifold $X$ will be called to be a complete pluripolar set if there is a
non-constant quasi-plurisubharmonic function $u$ defined on $X$ such that $F = \{u =-\infty \}$.
By the results of \cite{Col90}, this definition is equivalent to the condition that for
each point $z \in F$, there is a plurisubharmonic function $u$, $u \neq - \infty$, defined in an open
neighbourhood $U$ of $z$, such that $F \cap U = \{u = -\infty\}$ (i.e. complete locally pluripolar) if $F$ is closed.
In fact, they prove the following lemma.
\begin{mylem}(Lemma 1, \cite{Col90})
Let $X$ be a complex space and $A \subset X$ a closed complete locally pluripolar set.
Then $A$ can be defined by $\{U_i, \varphi_i\}_{i \in \N}$ with $U_i$ open and relatively compact in $X$, $\{U_i\}_{i \in \N}$
locally finite.
There exist $U''_i \Subset U'_i \Subset U_i \Subset X$ such that
$X = \cup_i U''_i$, $\varphi_i: U_i \to [-\infty, \infty[$ plurisubharmonic functions,
$$A \cap U_i=\{\varphi_i=-\infty\},$$
$\exp(\varphi_i)$  continuous, $\varphi_i$ smooth outside $A$ and $\varphi_i-\varphi_j$ bounded on $U'_i \cap U'_j \setminus A$.
\end{mylem}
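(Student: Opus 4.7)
The plan is as follows. First, using the hypothesis that $A$ is closed and complete locally pluripolar, I cover $X$, by paracompactness and second countability, with countably many relatively compact open sets $\{W_k\}_{k \in \N}$ each equipped with a psh function $\phi_k : W_k \to [-\infty, \infty[$, $\phi_k \not\equiv -\infty$, such that $A \cap W_k = \{\phi_k = -\infty\}$. Then extract a countable locally finite refinement $\{U_i\}_{i \in \N}$ of $\{W_k\}$ together with nested relatively compact triples $U''_i \Subset U'_i \Subset U_i \Subset W_{k(i)}$ satisfying $X = \cup_i U''_i$, using the standard shrinking lemma for paracompact second countable spaces; by further shrinking, arrange the star-refinement property that whenever $U_i \cap U_j \neq \emptyset$ the set $U'_i \cup U'_j$ is relatively compact in some single $W_{k_0(i,j)}$.

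Next I would regularize each local defining function. Using a Richberg-type argument on the complex space $W_k$ (or, after a local embedding, the analytic-singularities approximation of Theorem 6 above), replace $\phi_k$ by $\tilde\phi_k$ of the form $\log \sum_j |g_{k,j}|^2$ for a Bergman generating family $(g_{k,j})$ of holomorphic functions on $W_k$ vanishing on $A \cap W_k$. Then $\exp(\tilde\phi_k) = \sum_j |g_{k,j}|^2$ is continuous, $\tilde\phi_k$ is smooth off the common zero locus, and the polar set $\{\tilde\phi_k = -\infty\} = A \cap W_k$ is preserved.

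The final step, and the main obstacle, is to define $\varphi_i$ on each $U_i$ so that $\varphi_i - \varphi_j$ is bounded on $U'_i \cap U'_j \setminus A$. Two psh potentials for the same closed pluripolar set may diverge to $-\infty$ at genuinely different rates (as in $\log|z|$ versus $2 \log|z|$), so the naive choice $\varphi_i := \tilde\phi_{k(i)}|_{U_i}$ is not sufficient. I would proceed inductively in $i$: having fixed $\varphi_1,\ldots,\varphi_n$, choose a single master potential $\Phi$ on the common $W_{k_0}$ containing the finitely many $U'_i$ meeting $U_{n+1}$, and define $\varphi_{n+1}$ on $U_{n+1}$ as $\Phi|_{U_{n+1}}$ modified by a smooth correction (built from a partition of unity relative to $\{U'_i\}_{i \le n}$) that agrees with each $\varphi_i - \Phi$ on $U'_i \cap U_{n+1}$ up to a bounded smooth term. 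Because $\Phi$ and each earlier $\varphi_i$ differ from the common master potential on the overlap only by a smooth bounded function, the boundedness of $\varphi_{n+1} - \varphi_i$ on $U'_i \cap U'_{n+1} \setminus A$ reduces to boundedness of smooth functions on compact subsets. The delicate point is to verify that this inductive step can be carried out while simultaneously preserving psh-ness, continuity of $\exp(\varphi_{n+1})$, smoothness outside $A$, and all compatibility bounds already established for $i \le n$; this consistency, made possible by the star-refinement, is the heart of the argument.
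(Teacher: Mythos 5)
First, note that the paper does not prove this statement at all: it is quoted verbatim as Lemma~1 of Coltoiu [Col90] and then used (the gluing into a single quasi-psh function with pole set $A$ is the application, not the proof), so your attempt can only be measured against Coltoiu's argument, and as it stands it has two genuine gaps.

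The decisive error is your second step. A closed complete locally pluripolar set is in general \emph{not} analytic, so you cannot replace the local potential $\phi_k$ by $\log\sum_j|g_{k,j}|^2$ with $g_{k,j}$ holomorphic and keep $\{\tilde\phi_k=-\infty\}=A\cap W_k$: the polar set of such a function is the common zero set of the $g_{k,j}$, hence an analytic set. Take for instance $A=\{1/k:k\in\N\}\cup\{0\}\subset\C$, which is closed and complete pluripolar (a weighted sum $\log|z|+\sum_k c_k\log|z-1/k|$ with $c_k\to 0$ fast), but has an interior accumulation point, so it is not analytic; near $0$ the only holomorphic function vanishing on $A$ is $0$, and the construction collapses. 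For the same reason the analytic-singularity approximation of the paper's Theorem~6 (Demailly/Bergman) changes the polar set — it becomes the zero set of a multiplier ideal — and so destroys the required identity $A\cap U_i=\{\varphi_i=-\infty\}$. The regularity claims of the lemma ($\exp\varphi_i$ continuous, $\varphi_i$ smooth off $A$) have to be obtained by a different mechanism (Richberg-type smoothing on the exhaustion $\{\varphi>-k\}$ away from $A$, glued by max constructions), not by passing to analytic singularities.

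The second gap is that your third step, which you yourself identify as the heart of the matter, is not actually carried out. The statement ``$\Phi$ and each earlier $\varphi_i$ differ from the common master potential on the overlap only by a smooth bounded function'' is exactly what has to be proved and is false for arbitrary choices of local potentials (your own example $\log|z|$ versus $2\log|z|$ shows two potentials with the same polar set and unbounded difference); adding compactly supported smooth corrections $p_i$ via a partition of unity can never repair an unbounded discrepancy in the singular behaviour along $A$. So the key conclusion of the lemma, the boundedness of $\varphi_i-\varphi_j$ on $U_i'\cap U_j'\setminus A$, remains unestablished in your proposal; this uniformization of the local singularities across overlapping charts is precisely the content of Coltoiu's proof, to which the reader should be referred.
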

Since $\varphi_i-\varphi_j$ is bounded on $U'_i \cap U'_j \setminus A$,
we can choose $p_i \in C_c^\infty(U'_i)$ such that 
$$\varphi_i+p_i < \varphi_j+p_j$$
on $\d U'_i \cap U''_j \setminus A$ and $p_i \geq 0$.
In particular, 
$$\psi(z):=\max\{\varphi_i(z)+p_i(z); z \in U'_i\}$$
is a quasi psh function as the maximum of finite quasi psh functions locally.
Note that $A$ is the pole set of $\psi$.

 Notice that the image of a pluripolar set under a proper morphism is not necessarily pluripolar.
 For example, let $Z$ be a non-pluripolar set in $\P^1$.
 Consider $A:=Z \times \{0\} \subset \P^1 \times \P^1$.
 Since $A \subset \P^1 \times \{0\}$, $A$ is pluripolar.
 However, the image of $A$ under the projection of $\P^1 \times \P^1$ onto its first component is not pluripolar.
 The statement is still false if we change the condition ``pluripolar" to ``complete pluripolar"
 as shown in the following example recommended to the author by Demailly.
\begin{myex}
{\em 
In this example, we construct a complete pluripolar subset $A \subset \P^1 \times \P^1$ such that the image of $A$ under the projection onto the first component is $\Q \subset \C \subset \P^1$.
Notice that as countable union of pluripolar sets in $\C$, $\Q$ is pluripolar.
On the other side, $\Q$ is not complete pluripolar. Otherwises, there exists a K\"ahler form $\omega$ on $\P^1$ and a quasi-psh function $\varphi$ on $\P^1$ such that $\Q=\{\varphi=-\infty\}$ and $\omega + i \d \dbar \varphi \geq 0$ in the sense of currents.
In particular, over $\C$, $\omega|_\C=i \d \dbar \psi$ for some $\psi \in C^\infty(\C)$ and $\varphi+\psi$ is psh on $\C$ with pole set $\Q$. 
Thus, $\Q$ should be a $G_\delta$ subset in $\C$ (i.e. $\Q=\cap_{j \in \N} V_j$ for $V_j$ open in $\C$).
Notice that $\Q$ is dense in $\C$ as well as all the $V_j$'s.
Thus we have that 
$$\emptyset= \Q \cap (\C \setminus \Q)=\cap_{j \in \N} V_j \cap \cap_{q\in \Q} \C \setminus \{q\} $$
which contradicts the Baire category theorem.

Define 
$$A := \cup_{a \in \N^*, b\in \Z} \{z_{a,b}= (b/a,a)\} \subset \C^2 \subset \P^1 \times \P^1.$$
Define for any $z \in \C^2$,
$$\delta(z):= \inf_{z \neq z_{a,b}} \log|z-z_{a,b}|^2$$
which is not infinite on $\C^2 \setminus A$ implying that $A$ is discrete in $\C^2$.

Consider a sequence of $a_{a,b}>0$ such that $\sum_{a,b} a_{a,b} < \infty$
which would be chosen later.
Consider 
$$\varphi'(z):=\sum_{a,b} a_{a,b} (\log|z-z_{a,b}|^2-\log(1+|z|^2)).$$
Notice that for any $z \notin A$, $\varphi'(z) \geq \sum_{a,b} a_{a,b} (\delta(z)-\log(1+|z|^2)) \neq - \infty$
 and for any $z_{a,b}$, $\varphi'(z_{a,b})=-\infty$.
 In particular, $A \cap\C^2 =\{ \varphi'=-\infty\}$.
Consider
$$C_{a,b}=\sup_{z \in\C^2}( \log|z-z_{a,b}|^2-\log(1+|z|^2) )< \infty.$$
With suitable choice of $a_{a,b}>0$, we can assume that $\sum_{a,b} a_{a,b} C_{a,b} < \infty$.
In particular, $\varphi'$ is uniformly bounded from above on $\C^2$ which extends to be a quasi-psh function on $\P^1 \times \P^1$ with pole set $A$.
 }
\end{myex}
Now let us generalise the structure theorem of numerically flat vector bundle (Theorem 1.18 \cite{DPS94}) to compact K\"ahler orbifolds.
The proof is almost identical to the original proof other than the estimation of the second Chern class using Corollary 2.6 \cite{DPS94}.
Notice that the proof of Corollary 2.6 \cite{DPS94} uses a resolution of singularities to reduce to the smooth case which is highly non-trivial in the orbifold case.
In other words, could we always have a resolution of singularities of an orbifold by orbifold morphisms (such that the pullback of the orbifold vector bundle is always well-defined)? 
We will need the following lemma.
\begin{mylem}
Let $E$ be a nef orbifold vector bundle over a compact complex orbifold.
Let $\cQ$ be a quotient orbifold sheaf of $E$ of rank $r$.
Then $c_1(\cQ)$ is pseudoeffective (i.e. it contains a positive orbifold $(1,1)-$current).
\end{mylem}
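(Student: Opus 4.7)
The plan is to reduce the statement to a Hartogs-type extension of approximate quasi-psh potentials on $\det \cQ$ across the locus where $\cQ$ fails to be locally free, combined with the defining property of nefness applied to $\wedge^r E$. First I would reduce to the case that $\cQ$ is reflexive: the torsion part contributes to $c_1(\cQ)$ through its determinant, which on each local ramified smooth cover is the line bundle of an effective divisor and hence pseudoeffective; and replacing $\cQ$ by $\cQ/\tors$ and then by $(\cQ/\tors)^{**}$ leaves $c_1$ unchanged since the natural inclusion has cokernel supported in codimension at least two. Thus on each $\tilde U_\alpha$ the orbifold sheaf $\cQ_\alpha := (\pi_\alpha^*\cQ/\tors)^{**}$ is locally free outside an analytic subset $Z_\alpha$ of codimension $\geq 2$.

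On $\tilde V_\alpha := \tilde U_\alpha \setminus Z_\alpha$ the surjection $\pi_\alpha^* E /\tors \twoheadrightarrow \cQ_\alpha$ becomes a surjection of vector bundles, and taking $r$-th exterior powers yields a surjection $\wedge^r(\pi_\alpha^* E) \twoheadrightarrow \det\cQ_\alpha$. In Grothendieck's convention this corresponds to an orbifold morphism $\sigma: \tilde V_\alpha \to \P(\wedge^r E)$ with $\sigma^* \cO_{\P(\wedge^r E)}(1) \simeq \det\cQ_\alpha|_{\tilde V_\alpha}$. Nefness is stable under tensor powers and orbifold-theoretic quotients (by averaging the approximate metrics on $\cO_{\P(E^{\otimes r})}(1)$ and antisymmetrising), so $\wedge^r E$ is a nef orbifold vector bundle. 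Applying the nefness hypothesis to $\wedge^r E$ provides orbifold smooth metrics $h_\varepsilon$ on $\cO_{\P(\wedge^r E)}(1)$ with Chern curvature $\geq -\varepsilon p^*\omega$, where $p$ is the projection to $X$. Pulling back, $\sigma^* h_\varepsilon$ is a smooth metric on $\det\cQ_\alpha$ over $\tilde V_\alpha$ whose orbifold Chern curvature is $\geq -\varepsilon \omega$.

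The crucial extension step is as follows. Fix a smooth background metric $h_0$ on $\det\cQ_\alpha$ over all of $\tilde U_\alpha$, and set $\varphi_{\alpha,\varepsilon} := \log(h_0/\sigma^*h_\varepsilon)$; this is a smooth quasi-psh function on $\tilde V_\alpha$, locally bounded from above because $h_\varepsilon$ is bounded below by a fixed smooth metric on the compact orbifold $\P(\wedge^r E)$. Since $\tilde U_\alpha$ is smooth and $Z_\alpha$ has codimension at least two, the standard Hartogs-type extension for bounded-above quasi-psh functions produces a unique extension to $\tilde U_\alpha$ preserving the lower bound $i\d\dbar \varphi_{\alpha,\varepsilon} \geq -\varepsilon \omega + i\d\dbar \log h_0$. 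Uniqueness forces $G_\alpha$-invariance and compatibility with the overlap diagrams of Proposition 1, so the extensions glue to a global orbifold quasi-psh potential on $\det\cQ$, and letting $\varepsilon \to 0$ exhibits a closed positive orbifold $(1,1)$-current in the class $c_1(\det\cQ)=c_1(\cQ)$.

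The main obstacle is the extension step and its compatibility with the orbifold structure: one must verify that $\varphi_{\alpha,\varepsilon}$ is uniformly bounded above on compact subsets of $\tilde U_\alpha$ (using compactness of $\P(\wedge^r E)$ to dominate $h_\varepsilon$ from below by a fixed smooth metric), and that the Hartogs extension is compatible with the $G_\alpha$-action and with the principal-bundle glueing of Proposition 1. Once these equivariance checks are in place, the argument parallels the smooth-manifold case.
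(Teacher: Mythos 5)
Your proposal is correct in substance and rests on the same underlying mechanism as the paper's proof — transfer the $\varepsilon$-positive metrics supplied by nefness through the $r$-th exterior power surjection to $\det\cQ$, get curvature $\geq -\varepsilon\omega$, and conclude by weak compactness — but your implementation is genuinely different. The paper dualizes: the surjection $\wedge^r E \to \det\cQ$ gives an injection $\det(\cQ)^* \to \wedge^r E^*$, and the singular metric on $\det\cQ$ is written down globally by the weight $\log|s|^2_{\wedge^r h_\varepsilon^*}$, where $s$ is the image of a local generator of $\det(\cQ)^*$; since $|s|^2$ is a norm of a holomorphic section, the weight is automatically quasi-psh and locally bounded above across the degeneracy locus, so no extension theorem and no nefness of $\wedge^r E$ are needed. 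You instead restrict to the locus where the quotient is locally free, use the classifying section $\sigma$ into $\P(\wedge^r E)$, pull back $h_\varepsilon$ from $\cO_{\P(\wedge^r E)}(1)$, and extend the potential across the codimension $\geq 2$ bad set by the removable-singularity theorem for quasi-psh functions bounded above. This works, but it costs two extra inputs: (1) nefness of $\wedge^r E$, which your parenthetical ``averaging and antisymmetrising'' does not prove — in the compact K\"ahler/complex orbifold setting this is the orbifold analogue of the DPS statement that exterior powers of nef bundles are nef, which rests on the metric characterization of nefness via symmetric powers and should be proved or cited on the local smooth covers (to be fair, the paper's own terse use of ``metrics $h_\varepsilon$ on $E$'' leans on a comparable point); and (2) the boundedness above of $\varphi_{\alpha,\varepsilon}$ near $Z_\alpha$, where ``compactness of $\P(\wedge^r E)$'' by itself is not quite the argument: you must compare $h_\varepsilon$ with the quotient metric induced by a smooth metric $H$ on $\wedge^r E$, and then the needed bound is exactly $\log|s|^2_{H^*}\leq C$ on compacts — once you spell this out you have essentially reproduced the paper's direct global formula, which bypasses the Hartogs step altogether.

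Two small precisions you should add: take $\tilde V_\alpha$ to be the locus where $\pi_\alpha^*\cQ/\tors$ itself (not its double dual) is locally free, so that $\wedge^r(\pi_\alpha^*E)\to \det\cQ_\alpha$ is actually surjective there (this only enlarges $Z_\alpha$ within codimension $\geq 2$); and in the limit $\varepsilon\to 0$, normalize the glued potentials (e.g.\ $\sup_X\varphi_\varepsilon=0$) and invoke the compactness of families of quasi-psh functions with a uniform curvature lower bound, since the lemma assumes only a compact complex (not necessarily K\"ahler) orbifold.
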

\begin{proof}
Notice that the determinant orbifold line bundle of a torsion orbifold sheaf admits a global non-trivial section which implies in particular that its first Chern class is pseudoeffective.
Thus without loss of generality, we can assume that $\cQ$ is torsion free in which case $\det(\cQ)=(\wedge^r \cQ)^{**}$ over the local ramified smooth cover.

The surjection $\wedge^r E \to \wedge^r \cQ$ induces an injection $\det(\cQ)^* \to \wedge^r E^*$.
In particular, the smooth metrics $h_\varepsilon$ on $E$ induces a possible singular metric on $\det(\cQ)$ whose curvature is locally given by $T_\varepsilon:=i \d \dbar \log |s|^2_{\wedge^r h_\varepsilon^*}$
where $s$ is a local generator of $\det(\cQ)^*$.
Since $E$ is nef, $T_\varepsilon \geq -\varepsilon \omega$ in the sense of currents with some reference Hermitian metric $\omega$.
The weak compactness implies that $c_1({\det(\cQ)})$ is pseudoeffective.
\end{proof}
We still have the following calculation over compact complex orbifolds, which can be seen as a direct consequence of intersection theory, and is still valid on the level of forms without passing to cohomology classes: for every $k \in \N$
$$\pi_* \left(\frac{i}{2 \pi} \Theta (\cO_{\P(E)}(1),h)\right)^{r+k}= s_k(E,h)$$
for any smooth orbifold metric $h$ on $E$ with $\pi: \P(E) \to X$.
Note that the Segre classes can be written in terms of Chern classes and the Chern classes can be represented by the Chern forms derived from the curvature tensor. 
For the manifold case, we refer for example to the papers \cite{Div16}, \cite{Gul12} and \cite{Mou04}.
As consequence, if $E$ is a nef orbifold vector bundle over $(X, \omega)$ a compact K\"ahler orbifold,
$$\pi_*\left(\frac{i}{2 \pi} \Theta (\cO_{\P(E)}(1),h_\varepsilon)+\varepsilon \pi^* \omega \right)^{r+2}= s_2(E,h_\varepsilon)+c_1(E,h_\varepsilon) \wedge (r+2) \varepsilon \omega+\frac{(r+2)(r+1)}{2} \varepsilon^2 \omega^2$$
are positive orbifold smooth forms which imply the existence of a positive orbifold current in $s_2(E)$.
In particular, we have Chern number estimate
$$c_2(E) \cdot \omega^{n-2} \leq c_1^2(E) \cdot \omega^{n-2}.$$

Now we can give the analogue of Theorem 1.18 \cite{DPS94} in the orbifold setting.
\begin{mythm}
Suppose that $X$ is a compact K\"ahler orbifold. Then an orbifold vector bundle
$E$ over $X$ is numerically flat (i.e. both $E, E^*$ are nef) if and only if $E$ admits a filtration
$$\{0\}= E_0  \subset E_1 \subset \cdots \subset E_p = E$$
by orbifold vector subbundles such that the quotients $E_k/E_{k-1}$ are Hermitian flat orbifold vector bundles.
\end{mythm}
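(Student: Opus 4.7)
The plan is to prove the easier \emph{if} direction by induction and the harder \emph{only if} direction by combining the orbifold Bogomolov inequality (Proposition~6) with the reverse Chern-number inequality $c_2(E) \cdot \omega^{n-2} \leq c_1(E)^2 \cdot \omega^{n-2}$ for nef orbifold bundles established just before the theorem statement. For the \emph{if} direction, a Hermitian flat orbifold bundle is trivially nef and self-dually so, and the property ``$F$ and $F^*$ are both nef'' is closed under orbifold extensions: given approximating metrics $h_\varepsilon$ on the subbundle and quotient of an orbifold exact sequence, a smooth splitting combined with a sufficiently large multiple of the pullback of a reference K\"ahler form absorbs the off-diagonal curvature contribution on $\cO_{\P(F)}(1)$, and the same argument dualizes. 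Inducting on the length of the filtration gives numerical flatness of $E$.

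For the \emph{only if} direction, fix an orbifold K\"ahler form $\omega$ and define the slope of orbifold coherent sheaves using Definition~4. By Lemma~4 every orbifold quotient sheaf of $E$ has pseudoeffective first Chern class, so $\mu_{\omega,\min}(E) \geq 0$; applied dually to $E^*$ this gives $\mu_{\omega,\max}(E) \leq 0$, hence $E$ is $\omega$-semistable of slope $0$ with respect to orbifold subsheaves. A Jordan--H\"older refinement produces a filtration by orbifold subsheaves with $\omega$-stable graded pieces $Q_i$, each satisfying $c_1(Q_i) \cdot \omega^{n-1} = 0$ and with $c_1(Q_i)$ pseudoeffective (Lemma~4 applied successively). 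A positive $(1,1)$-current $T_i$ representing $c_1(Q_i)$ then satisfies $T_i \wedge \omega^{n-1} = 0$ as a positive measure on $X$, which forces $T_i \equiv 0$ for a positive $(1,1)$-current tested against a K\"ahler $(n-1,n-1)$-form; in particular $c_1(Q_i) = 0$ as a cohomology class, and by the same argument $c_1(E) = 0$.

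With every $c_1(Q_i)$ vanishing, the Whitney formula reduces to $c_2(E) = \sum_i c_2(Q_i)$, while the reverse inequality gives $c_2(E) \cdot \omega^{n-2} \leq c_1(E)^2 \cdot \omega^{n-2} = 0$, and Proposition~6 applied to each stable $Q_i$ gives $c_2(Q_i) \cdot \omega^{n-2} \geq 0$. Summing forces every Bogomolov inequality to saturate, and the orbifold Uhlenbeck--Yau theorem (Theorem~4.1 of Fau19, already invoked in the proof of Proposition~6) supplies a Hermitian--Einstein orbifold metric on each $Q_i$ with vanishing Einstein factor and vanishing trace-free curvature, so $Q_i$ is Hermitian flat. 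The main obstacle is that the Jordan--H\"older pieces $Q_i$ are a priori only stable orbifold \emph{reflexive} sheaves, whereas Proposition~6 is stated for orbifold vector bundles, and unlike the smooth setting of DPS94 no orbifold resolution of singularities is available (as flagged in the paragraph following Corollary~A) to reduce to the locally free case. I would handle this by observing that on each local ramified smooth cover $Q_i$ is locally free away from an analytic subset of codimension at least $3$, and the flat holomorphic structure supplied by the Hermitian flat metric extends as a local system across such a codimension-$\geq 3$ subset, forcing $Q_i$ to be an orbifold vector bundle; the filtration then lifts back to an orbifold vector subbundle filtration of $E$ with Hermitian flat quotients.
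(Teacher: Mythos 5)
Your reduction to semistability of slope zero and the strategy ``Segre inequality plus Bogomolov forces equality, hence Hermitian flatness'' is in the spirit of the paper's proof, but the way you handle the central difficulty is circular. You apply Proposition~6 and the orbifold Uhlenbeck--Yau theorem to the Jordan--H\"older pieces $Q_i$ to produce a Hermitian flat metric, and then propose to use that very flat structure to extend $Q_i$ across the codimension $\geq 3$ locus and conclude that $Q_i$ is an orbifold vector bundle. But Proposition~6 and Theorem~4.1 of Fau19 are stated (and used in the paper) only for orbifold vector bundles, so you cannot invoke them before knowing that $Q_i$ is a bundle; what you would actually need is a Bando--Siu type theorem producing admissible Hermitian--Einstein metrics on stable \emph{reflexive} orbifold sheaves, together with the equality analysis for such metrics, and nothing of the sort is available in the paper. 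There are two further gaps of the same flavour: (a) Lemma~4 gives pseudoeffectivity of $c_1$ only for quotient sheaves of the nef bundle $E$, whereas the intermediate Jordan--H\"older pieces are subquotients, so ``Lemma~4 applied successively'' is not justified as stated; (b) your Whitney formula $c_2(E)=\sum_i c_2(Q_i)$ presupposes second Chern classes of orbifold coherent sheaves and a Whitney formula for them, but the paper only defines $c_i$ for orbifold vector bundles (Definition~9) and explicitly flags that Chern classes of orbifold sheaves cannot be defined by locally free resolutions; likewise the Segre-form inequality $c_2\cdot\omega^{n-2}\leq c_1^2\cdot\omega^{n-2}$ is proved only for \emph{nef orbifold bundles}, and a subquotient $Q_i$ is neither known to be nef nor a bundle.

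The paper resolves exactly this point by a different ordering of the argument: it peels off only the last Harder--Narasimhan quotient $\cQ=E/\cF_{m-1}$, which \emph{is} a quotient of $E$, shows $c_1(\cQ)=c_1(\cS)=0$, and then proves \emph{first} that $\cS\subset E$ is an orbifold subbundle and $\cQ$ an orbifold bundle, via the DPS94 Corollary~1.20/Proposition~1.16 mechanism: the injection $\det(\cQ^*)\to\bigwedge^p E^*$ gives a section $\tau\in H^0(X,\bigwedge^p E^*\otimes\det(\cQ^{**}))$ of a nef bundle with numerically trivial determinant twist, which cannot vanish anywhere, so the quotient is locally free on each ramified cover. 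Only after this does it apply the Segre inequality (using that $\cQ$ is a nef bundle) and Proposition~6 (using that $\cQ$ is a stable bundle) to get Hermitian flatness, and then it induct on $\cS$. If you replace your extension-across-codimension-three step by this non-vanishing section argument (and run the induction one quotient at a time so that Lemma~4 and nefness are always applied to genuine quotients of a nef bundle), your outline becomes essentially the paper's proof; as written, the key step is not established.
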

\begin{proof}
The proof is analogous to those of \cite{CCM} and \cite{HIM}. 
The proof is obtained by considering the Harder-Narasimhan filtration of $E$ and showing that all the graded pieces are Hermitian flat.
For the convenience of the reader, we outline here the arguments with the necessary modifications. 

Consider the Harder-Narasimhan filtration of $E$ with respect to $\omega$, say
$$\cF_0=0 \to \cF_1 \to \cF_{2} \to \cdots\to \cF_m:=E$$
where $\cF_i /\cF_{i-1}$ is $\omega$-stable for every $i$ and $\mu_1 \geq \mu_2 \geq \cdots \geq \mu_m$, and where $\mu_j=\mu_\omega(\cF_j / \cF_{j-1})$ is the slope of $\cF_j / \cF_{j-1}$ with respect to $\omega$.
Now, consider the coherent orbifold subsheaf $\cS=\cF_{m-1}$.
Notice that by construction $\cS$ is reflexive by taking the double dual if necessary (on each local ramified smooth cover), as this preserves the rank, first Chern class and slope. Then we get a short exact sequence
$$0 \to \cS \to E \to \cQ \to 0.$$

In particular, its first Chern class $c_1(\cQ)$ is pseudo-effective by Lemma 4.  On the other hand, we have
$$0 =c_1(E) =c_1(\cS) +c_1(\cQ)$$
by the assumption. Thus
$$\int_X c_1(\cQ)\wedge \omega^{n-1}=-\int_X  c_1( \cS) \wedge \omega^{n-1} \leq 0,$$
and $c_1(\cQ)=c_1(\cS)=0$.

We claim that $\cS$ is an orbifold vector subbundle of $E$, and that the morphism $\cS \to E$ is an orbifold bundle morphism; for this, we apply Corollary~1.20 of \cite{DPS94} and prove that $\det(\cQ^*) \to \bigwedge^p E^*$ is an injective orbifold bundle morphism (which is local in nature), where $p$ is the rank of $\cQ$.
This corresponds to a global section $\tau \in H^0(X, \bigwedge^p E^* \otimes \det(\cQ^{**}))$.
Thus $ \tau$ cannot vanish at any point of $X$ by  Prop.~1.16 of \cite{DPS94} whose proof works identically in the orbifold setting.  
This concludes the proof of the claim.
In particular, $\cQ$ is an orbifold vector bundle.



Let $s$ be the rank of $\cS$, which must be strictly smaller than the rank $r$ of $E$. 
Since $E$ is nef and $\det\,\cQ^*$ is numerically trivial, we infer that $\cS$ is a nef orbifold vector bundle.

By the discussion of Segre forms, we have that
$$c_2(\cQ) \cdot \omega^{n-2} \leq c_1(\cQ)^2 \cdot \omega^{n-2} =0.$$
On the other hand, the Bogomolov inequality (Proposition 6) shows that 
$$\int_{X} c_2(\cQ) \wedge \omega^{n-2} \geq 0.$$
The orbifold vector bundle $\cQ$ is thus in fact a Hermitian flat orbifold vector bundle.

Notice that $\cS$ is also a numerically flat orbifold vector bundle. 
Applying inductively the above arguments concludes the proof. 
\end{proof}
The main technical lemma in \cite{Wu20} can also be generalised to the orbifold setting.
\begin{mythm}
Let $\pi: X \to Y$ be an orbifold submersion between compact K\"ahler orbifolds of relative dimension $r-1$. 
Let $T$ be a closed positive $(1,1)-$orbifold current in the cohomology class $\{ \alpha \} \in H^{1,1}(X, \R)$
such that $T$ has analytic singularities (meaning having analytic singularities in each local ramified smooth cover) and is orbifold smooth on $X \setminus \pi^{-1}(Z)$ with $Z$ a closed analytic set of codimension at least $k$.
Assume that for any $y \in Y$, there exist an open neighborhood $U$ of $y$ and a quasi-psh orbifold function $\psi$ on $X$ such that
$\alpha + i \d \dbar \psi \geq 0$
in the sense of currents on
$\pi^{-1}(U)$ and $\psi$ is orbifold smooth outside a closed analytic set of codimension at least $k+r$.
Then there exists a closed positive orbifold current in the cohomology class $\pi_* \alpha^{r+k-1}$.
\end{mythm}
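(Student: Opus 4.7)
The plan is to adapt Wu's Segre-current technique (Theorem 1.2 of \cite{Wu20}) to the orbifold setting, combining the orbifold Demailly regularization proven above with Demailly's theory of Monge-Amp\`ere products of currents with analytic singularities applied chart-by-chart on the local ramified smooth covers. The local hypothesis on $\psi$ produces a well-defined $(r+k-1)$-fold self-intersection in the class $\alpha^{r+k-1}$ on a neighbourhood of each fiber, and a global regularization of $T$ together with a weak-limit extraction then yields the required global closed positive current in $\pi_* \alpha^{r+k-1}$.

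Concretely, for fixed $y \in Y$ take $U \ni y$ and $\psi$ as in the hypothesis and set $S := \alpha + i \d \dbar \psi \geq 0$ on $\pi^{-1}(U)$. Since the singular locus $\Sigma_\psi$ of $\psi$ has codimension at least $k + r$ in $X$, which strictly exceeds $r+k-1$, Demailly's theory (applied to the pulled-back psh function on each local ramified smooth cover, with equivariance under the local finite isotropy group automatic from the invariance of $\psi$) yields a well-defined closed positive orbifold current $S^{r+k-1}$ of bidegree $(r+k-1, r+k-1)$ on $\pi^{-1}(U)$ in the cohomology class $\alpha^{r+k-1}$. Fiber-integration along the orbifold submersion $\pi$ then produces $T_U := \pi_* S^{r+k-1}$, a closed positive orbifold $(k,k)$-current on $U$ in the class $\pi_* \alpha^{r+k-1}|_U$. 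To globalize, I would regularize $T$ using Theorem A above to obtain orbifold-smooth $T_\varepsilon = \alpha + i \d \dbar \Phi_\varepsilon \geq -\delta_\varepsilon \omega$ with $\delta_\varepsilon \to 0$ (the Lelong term $\lambda_\varepsilon u$ is uniformly bounded since $T$ has analytic singularities), form the smooth positive orbifold forms $R_\varepsilon := \pi_*(T_\varepsilon + \delta_\varepsilon \omega)^{r+k-1}$ on $Y$, and extract a weakly convergent subsequence. These represent the classes $\pi_*(\alpha + \delta_\varepsilon [\omega])^{r+k-1} \to \pi_* \alpha^{r+k-1}$, and the cohomological mass bound on the compact orbifold $Y$ produces a weak limit $R_0$ which is a closed positive orbifold $(k,k)$-current in the class $\pi_* \alpha^{r+k-1}$.

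The main obstacle is guaranteeing that $R_0$ is a bona fide current in the class $\pi_* \alpha^{r+k-1}$ rather than a trivial weak limit arising from mass escape to $\pi^{-1}(Z)$. Here the local construction is essential: on each chart $U$ one compares $R_\varepsilon|_U$ with $T_U$, writing $(T_\varepsilon + \delta_\varepsilon \omega) - S = i \d \dbar (\Phi_\varepsilon - \psi) + \delta_\varepsilon \omega$ and expanding the $(r+k-1)$-fold product telescopically so that each term is a closed current whose class tends to zero with $\varepsilon$. The codimension condition $\mathrm{codim}\,\Sigma_\psi \geq k+r$ prevents singular mass concentration along $\pi^{-1}(Z)$ and ensures the telescoping terms have uniformly bounded mass, so passing to the limit identifies $R_0$ locally with $T_U$ up to a $\partial \bar\partial$-exact $(k-1, k-1)$-current, producing the desired global closed positive orbifold current.
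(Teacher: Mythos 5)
Your overall plan---adapting the Segre-current technique of \cite{Wu20} chart by chart on the local ramified smooth covers, with the local product $S^{r+k-1}$, $S=\alpha+i\partial\overline{\partial}\psi$, well defined by Demailly's theory because $\mathrm{codim}\,\Sigma_\psi\ge k+r>r+k-1$, and $\pi_*S^{r+k-1}$ a closed positive $(k,k)$-current over $U$---is indeed the route the paper takes (its proof is essentially a reference to \cite{Wu20}, the only new point being that the uniqueness-of-weak-limit step is performed on a local ramified smooth cover, where a resolution of singularities reduces the analytic singularities to the divisorial case). However, your globalization step contains a genuine error. Theorem A does not give $T_\varepsilon=\alpha+i\partial\overline{\partial}\Phi_\varepsilon\ge-\delta_\varepsilon\omega$: it gives $T_\varepsilon\ge-\lambda_\varepsilon u-\delta_\varepsilon\omega$ with $\lambda_\varepsilon(x)\to\nu(T,x)$, and whenever $T$ is genuinely singular its Lelong numbers are strictly positive along its singular set inside $\pi^{-1}(Z)$, so the loss $\lambda_\varepsilon u$ does not tend to $0$ there; "uniformly bounded" cannot be absorbed into a term $\delta_\varepsilon\omega\to 0$. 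Consequently $(T_\varepsilon+\delta_\varepsilon\omega)^{r+k-1}$ is not positive near $\pi^{-1}(Z)$, $R_\varepsilon$ is not a positive form, and neither the mass bound from the cohomology class nor the positivity of a weak limit follows. Handling exactly this loss of positivity along $\pi^{-1}(Z)$ is the reason the hypothesis on $\psi$ exists, so this step cannot be treated as routine.

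Second, the difficulty you single out as the "main obstacle" is misidentified, and the genuinely delicate point is left unproved. On a compact base there is no mass escape: if the $R_\varepsilon$ were closed positive with classes converging to $\pi_*\alpha^{r+k-1}$, their masses would be bounded by pairing with powers of a Kähler form and any weak limit would automatically be a closed positive current in the limit class (pair against closed smooth forms). The real issues are, besides the failure of positivity above, the convergence and identification of the limit: your telescoping expansion requires the mixed terms involving $(\Phi_\varepsilon-\psi)$ wedged with products of $S$ and $T_\varepsilon$ to be well defined with uniformly controlled local mass and to converge, i.e.\ a continuity statement for Monge--Amp\`ere products along the approximation; asserting that "the codimension condition prevents singular mass concentration" is not an argument. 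This is precisely where the paper, following \cite{Wu20}, uses the analytic-singularities hypothesis: one passes to a local ramified smooth cover and resolves singularities so that the singular part becomes divisorial, where the uniqueness of the weak limit can be checked. Without that step (or an equivalent substitute) your proof is incomplete.
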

\begin{proof}
The proof works almost identically other than one point.
In the proof of the uniqueness of weak limit, we use a resolution of singularities to reduce the analytic singularities to the divisorial case.
Since the uniqueness is local in nature, we can apply the same proof to some local ramified smooth cover (for which we apply the resolution of singularities).
\end{proof}
Using the Lelong number estimate and orbifold version of regularisation by smooth forms in the previous section, we have the following result.
\begin{mycor}
Let $E$ be a strongly psef orbifold vector bundle of rank $r$ on a compact K\"ahler orbifold $(X, \omega)$, such that $c_1(E)=0$. 
 Then $E$ is a nef $($and thus numerically flat$\,)$ orbifold vector bundle.
\end{mycor}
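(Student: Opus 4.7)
The plan is to establish that $E$ is nef; the numerical flatness then follows because the proof of Theorem~8 uses only the hypothesis ``$E$ nef and $c_1(E)=0$'' to produce the filtration by Hermitian flat quotients. The strategy, adapted from \cite{Wu20}, combines the Segre current construction of Theorem~9 with the orbifold Demailly regularisation of Theorem~5.

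By strong pseudoeffectivity, for each $\delta>0$ there is a singular orbifold metric $h_\delta$ on $\cO_{\P(E)}(1)$ with analytic singularities such that $T_\delta := c_1(\cO_{\P(E)}(1),h_\delta) + \delta\, p^*\omega$ is a closed positive $(1,1)$-orbifold current on $\P(E)$ and $Z_\delta := p(\mathrm{Sing}\,T_\delta) \subsetneq X$ is a proper analytic subset, where $p\colon \P(E)\to X$ is the projection of relative dimension $r-1$. I would apply Theorem~9 with $Z=Z_\delta$ and $k=1$: it produces a closed positive orbifold current on $X$ representing the class $p_*[T_\delta]^r$. Expanding by the projection formula, using the Segre--Chern identity $p_* c_1(\cO_{\P(E)}(1))^{r-1+j}=s_j(E)$, and invoking $c_1(E)=0$, only the $j=0$ and $j=1$ binomial terms contribute for dimensional reasons, and one finds $p_*[T_\delta]^r = r\delta\,[\omega]$.

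The key step is to deduce from this cohomological smallness a uniform bound $\nu(T_\delta,\cdot) \le C_1\,\delta^{1/r}$ on the Lelong numbers of $T_\delta$ throughout $\P(E)$. Working on each local ramified smooth cover and using Siu's decomposition $T_\delta = \sum_j \nu_j [D_j] + R_\delta$, any component $D_j$ with $p(D_j)=X$ is excluded by the non-dominance of $\mathrm{Sing}\,T_\delta$, while any $D_j \subset p^{-1}(Z_\delta)$ would cause the non-pluripolar self-intersection $\langle T_\delta^r \rangle$ to contribute an atom of mass at least $\nu_j^r$ times a positive fibre intersection, localised on $Z_\delta$; since the cohomology class of $p_*\langle T_\delta^r\rangle$ is the smooth class $r\delta\,[\omega]$, comparison forces $\nu_j = O(\delta^{1/r})$, and iterating on the residual $R_\delta$ gives the bound uniformly on $\P(E)$.

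With this control, I apply Theorem~5 to $T_\delta$ on $\P(E)$, taking $u := C_0\,p^*\omega$ for $C_0$ large enough that $\Theta(T_{\P(E)}) + u\otimes \Id_{T_{\P(E)}}$ satisfies the required semi-positivity (possible since the vertical tangent bundle of an orbifold projective bundle is relatively positive, and negativity along horizontal directions is absorbed by a sufficiently large multiple of $p^*\omega$). The output is smooth orbifold forms $T_{\delta,m}=\alpha_\delta + i\d\dbar\psi_{\delta,m}$ satisfying $T_{\delta,m} \ge -\lambda_{\delta,m}\,C_0\,p^*\omega - \eta_{\delta,m}\,\omega_{\P(E)}$ with $\lambda_{\delta,m} \to \nu(T_\delta,\cdot) = O(\delta^{1/r})$ and $\eta_{\delta,m}\to 0$; a diagonal extraction in $(\delta,m)$ produces, for every $\delta'>0$, a smooth orbifold metric on $\cO_{\P(E)}(1)$ with curvature $\ge -\delta'\,p^*\omega$, which is the nefness of $E$. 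The principal obstacle is precisely the Lelong-number control step: the non-pluripolar self-intersection and mass balance must be formulated carefully on each local ramified smooth cover and made compatible with orbifold pushforward conventions, which is the heart of the orbifold adaptation of the argument in \cite{Wu20}.
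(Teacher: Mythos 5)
Your skeleton — compute $p_*\bigl(c_1(\cO_{\P(E)}(1))+\delta\,p^*\omega\bigr)^r=r\delta\,[\omega]$ from $c_1(E)=0$, deduce a uniform bound on the Lelong numbers of the approximating currents $T_\delta$ tending to $0$, regularise by Theorem 5, and get numerical flatness from nefness plus $c_1(E)=0$ — is indeed the route the paper intends (it quotes ``the Lelong number estimate'' of \cite{Wu20}, transplanted to local ramified covers, plus the orbifold regularisation by smooth forms). But your derivation of the key uniform estimate does not work as written. First, the non-pluripolar product charges no pluripolar set by definition, so a divisorial component $D_j$ of the Siu decomposition contributes \emph{no} atom to $\langle T_\delta^r\rangle$; the quantity ``$\nu_j^r$ times a positive fibre intersection'' is exactly what the non-pluripolar product discards, so the asserted mass comparison is empty. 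Second, the identification of the class of $p_*\langle T_\delta^r\rangle$ with the smooth class $r\delta[\omega]$ is unjustified: for non-pluripolar products one only has an inequality of classes, with possible loss of mass concentrated precisely along the singular locus where you want to detect it. Third, Theorem 9 cannot be invoked with $k=1$ as you do: besides the codimension condition on $Z_\delta$, it requires local quasi-psh potentials of the class over $p^{-1}(U)$ that are orbifold smooth outside an analytic set of codimension at least $k+r=r+1$, whereas strong psefness only provides potentials singular along $p^{-1}(Z_\delta)$, which may have codimension one; and in any case Theorem 9 only produces \emph{some} positive current in the class $p_*\alpha_\delta^r$, with no stated comparison between its Lelong numbers or local masses and $\nu(T_\delta,\cdot)$, hence by itself no bound on $\nu(T_\delta,\cdot)$. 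Finally, ``iterating on the residual $R_\delta$'' is not an argument: Siu's decomposition controls only generic Lelong numbers along divisors, while nefness needs $\sup_{\P(E)}\nu(T_\delta,\cdot)\to 0$, including on the codimension $\geq 2$ Lelong sublevel sets.

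What is needed at this point is precisely the Lelong-number estimate of \cite{Wu20}: the $r$-fold self-intersection of a current with analytic singularities along sets of codimension one is made sense of by passing to a log resolution (here on each local ramified smooth cover, compatibly with the group actions), writing the pullback as a divisor plus a smooth positive form and tracking the divisorial corrections under pushforward, so that the smallness of the class $r\delta[\omega]$ genuinely dominates $\nu(T_\delta,\cdot)^r$; this bookkeeping cannot be short-circuited by a non-pluripolar mass balance, and it is the step your proposal leaves open. A smaller point: $u=C_0\,p^*\omega$ does not fulfil the curvature hypothesis of Theorem 5, because $p^*\omega$ vanishes on vertical vectors $\theta$ while the curvature term must be compensated in those directions as well; take instead $u$ a large multiple of a K\"ahler orbifold form $\omega_{\P(E)}$ (for instance $c_1(\cO_{\P(E)}(1),h_0)+C\,p^*\omega$ for a smooth orbifold metric $h_0$ and $C\gg 0$), and absorb the loss $\lambda_\varepsilon u+\delta_\varepsilon\omega_{\P(E)}$ back into the class $c_1(\cO_{\P(E)}(1))+\delta' p^*\omega$ using the uniform smallness of the Lelong numbers and a convex combination with $c_1(\cO_{\P(E)}(1),h_0)+C\,p^*\omega$; only then do you reach nefness in the sense of Definition 8.
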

As a geometric application, we obtain the following generalisation of Theorem 7.7 in \cite{BDPP}.
\begin{mycor} {\it
For a compact K\"ahler orbifold, if $c_1(X) = 0$ and $T_X$ is strongly psef, then a finite quasi-\'etale cover of $X$
is a torus.
In particular, an irreducible symplectic, or Calabi-Yau orbifold does not have a strongly psef orbifold tangent bundle or orbifold cotangent bundle.}
\end{mycor}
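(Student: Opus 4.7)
The plan is to reduce the statement to the orbifold Beauville--Bogomolov decomposition of Campana, using the numerical flatness criterion provided by the preceding corollary.

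First I would observe that since $c_1(X) = 0$, we have $c_1(T_X) = 0$, so the previous Corollary~\ref{} (applied to $E = T_X$) immediately upgrades the hypothesis ``$T_X$ strongly psef'' to the much stronger conclusion that $T_X$ is numerically flat. Theorem~D (the orbifold structure theorem) then provides a filtration of $T_X$ by orbifold vector subbundles whose graded pieces are Hermitian flat orbifold vector bundles. In particular, $c_2(T_X) \cdot \omega^{n-2} = 0$ and, more importantly, the first graded piece is an orbifold Hermitian flat vector bundle that embeds as an orbifold subsheaf of $T_X$.

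Next I would invoke Campana's orbifold Beauville--Bogomolov decomposition theorem (cited in the introduction as \cite{Cam04}), which applies to any compact K\"ahler orbifold with $c_1 = 0$: after passing to a finite quasi-\'etale cover $\tilde{X} \to X$, one obtains a splitting
\[
\tilde{X} \simeq T \times \prod_i X_i \times \prod_j Y_j
\]
where $T$ is a complex torus, each $X_i$ is an irreducible Calabi--Yau orbifold, and each $Y_j$ is an irreducible symplectic orbifold. Since the cover is quasi-\'etale and $T_{\tilde{X}}$ agrees with the pullback of $T_X$ in codimension one (hence, being reflexive, everywhere as an orbifold vector bundle), $T_{\tilde{X}}$ remains numerically flat; by the product decomposition the tangent bundle of each factor is numerically flat as well.

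The remaining task, and the main obstacle, is to rule out the irreducible Calabi--Yau and irreducible symplectic factors. For each such factor $X_i$ (resp.\ $Y_j$), Theorem~D applied to its numerically flat tangent bundle produces a nonzero orbifold Hermitian flat subbundle $\cF \hookrightarrow T_{X_i}$. Using the simply-connectedness of the orbifold fundamental group that is built into the definition of an irreducible Calabi--Yau or symplectic orbifold, the Hermitian flat orbifold bundle $\cF$ must be trivial, so $H^0(X_i, T_{X_i}) \neq 0$. On the other hand, on an irreducible Calabi--Yau orbifold of dimension $n$ the holomorphic volume form yields $H^0(T_{X_i}) \simeq H^0(\Omega^{n-1}_{X_i}) = 0$, and on an irreducible symplectic orbifold the symplectic form yields $H^0(T_{Y_j}) \simeq H^0(\Omega^1_{Y_j}) = 0$. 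This contradiction forces all such factors to be absent, so $\tilde{X} = T$ is a torus. The ``in particular'' assertion is then immediate: an irreducible Calabi--Yau or symplectic orbifold $X$ is by definition not covered quasi-\'etalely by a torus, so neither $T_X$ (by what we just proved, since $c_1(X) = 0$) nor $\Omega^1_X \simeq T_X^*$ (which has $c_1 = 0$ and would by the same token force a torus cover) can be strongly psef.
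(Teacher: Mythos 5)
Your overall skeleton --- Corollary 2 to upgrade strong psefness plus $c_1=0$ to numerical flatness, Campana's decomposition \cite{Cam04} after a finite quasi-\'etale cover, numerical flatness of each factor, then exclusion of the Calabi--Yau and symplectic factors --- is exactly the paper's, but your mechanism for excluding those factors has a genuine gap. You claim that orbifold simple connectedness is ``built into the definition'' of an irreducible Calabi--Yau or irreducible symplectic orbifold, and you use it to conclude that the Hermitian flat orbifold subbundle $\cF \subset T_{X_i}$ produced by Theorem 8 is trivial. The definitions used in the paper (following \cite{BGL}: generation of the algebra of reflexive forms on all quasi-\'etale covers) contain no such hypothesis, and the claim is false in that generality: the quotient of a K3 surface by a symplectic involution is an IHS orbifold in this sense, yet its orbifold fundamental group (equivalently $\pi_1$ of the regular locus, since the local groups are small) is $\Z/2\Z$. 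A Hermitian flat orbifold bundle corresponds to a unitary representation of the orbifold fundamental group, and if that representation has no invariant vectors the bundle has no nonzero holomorphic sections; so your key step ``$\cF$ trivial, hence $H^0(X_i,T_{X_i}) \neq 0$'' does not follow as written, and the contradiction with $H^0(\Omega^{[n-1]})=0$, resp.\ $H^0(\Omega^{[1]})=0$, is not reached.

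The gap is repairable, but only with input you do not supply: one needs finiteness of the orbifold fundamental group of each CY/IHS factor (which comes from the holonomy being $SU(m)$ or $Sp(r)$ via a Bochner/Cheeger--Gromoll type argument, not from the definition), after which one may pass to the orbifold universal cover --- again a quasi-\'etale cover, so the vanishing of reflexive forms in degrees $1$ and $n-1$ still applies there --- trivialize $\cF$ on it, and conclude. The paper's proof sidesteps this entirely: it uses stability of the tangent bundle of each CY/IHS factor, the vanishing $c_2\cdot\omega^{n-2}=0$ forced by numerical flatness (Theorem 8), and the equality case of the Bogomolov inequality (Proposition 6) to get projective flatness, hence (as $c_1=0$) Hermitian flatness, hence trivial restricted holonomy, contradicting the holonomy characterization $SU$/$Sp$ of those factors --- no simple connectedness is needed. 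Your handling of the ``in particular'' statement (dualizing to treat $T_X^*$) is fine, but the main exclusion step needs the repair above.
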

\begin{proof}
By the orbifold version of Beauville-Bogomolov theorem given in \cite{Cam04}, up to a finite orbifold cover $\pi: \tilde{X} \to X$, $\tilde{X}$ is a product of $\prod T_i \times \prod S_j \times \prod Y_k$ where $T_i$ are complex tori, $S_j$ are Calabi-Yau orbifolds and $Y_k$ are irreducible symplectic orbifolds.
Since the tangent bundle of $\tilde{X}$ is numerically flat by Corollary 2, the orbifold tangent bundle of all the components in the direct sum is numerically flat.
In particular, all the components have vanishing second Chern class by Theorem 8.
By representation theory, the orbifold tangent bundle of the Calabi-Yau or irreducible symplectic components is stable (or by existance of orbifold K\"ahler-Einstein metric).
Thus we have the equality case in the Bogomolov inequality which implies that the orbifold tangent bundle of the Calabi-Yau or irreducible symplectic components is orbifold projectively flat.
Since the first Chern class of the Calabi-Yau or irreducible symplectic components vanishes,
the orbifold tangent bundle is in fact orbifold Hermitian flat.
In particular, the restricted holonomy groups of the Calabi-Yau or irreducible symplectic components are trivial.
In other words, there are only the complex tori components.
\end{proof}
By the work of \cite{LN19}, we can show in fact that an irreducible symplectic, or Calabi-Yau orbifold $X$ does not admit strongly psef orbifold vector bundle $\wedge^r T_X$ or $\wedge^r T_X^*$ for any $0<r<\mathrm{dim}_{\C} (X)$.
A stronger result in the projective singular setting can be found in Theorem 1.6 of \cite{HP}.

The following example shows that one possible generalisation of \cite{BDPP} to the category of projective orbifolds could not hold.
More precisely, there exists a rational projective variety $X$ with quotient singularities such that the orbifold canonical line bundle is pseudoeffective.
\begin{myex}{\em (log Enriques surfaces, \cite{Zha91}, \cite{Zha93})

The log Enriques surfaces are defined in \cite{Zha91} as follows.
A normal projective algebraic surface $X$ is said to be a log Enriques surface if
\begin{enumerate}
\item it has only quotient singularities and the singular part is non-empty;
\item $NK_X$ is a trivial Cartier divisor for some positive integer $N$;
\item $H^1(X,\cO_X)=0$. 
\end{enumerate}
Since $X$ is projective with quotient singularities, $X$ is a compact complex orbifold.
However, $X$ is not necessarily of canonical singularities.
Since $K_X$ is locally free on the regular part and coincides with the orbifold canonical line bundle, 
Condition 2 implies that the orbifold first Chern class of the orbifold canonical line bundle is trivial in $H^2(X,\R) \simeq H^2(X_\reg, \R)$ which is in particular pseudoeffective.

On the other hand, by Lemma 3.4 of \cite{Zha91}, if $K_X$ is not Cartier while $2K_X$ is Cartier, $X$ is rational.
We will always consider this special case from now on.
Notice that $X$ admits a quasi-\'etale cover $Y$ (called canonical covering in \cite{Zha91}) with $K_Y =\cO_Y$ which has at worst ordinary double point. 
The cover $Y$ gives the Beauville-Bogomolov decomposition given by \cite{HP} as explained below.
Consider $\pi: \tilde{Y} \to Y$ the minimal resolution of singularities of $Y$.
Since $Y$ has at worst ordinary double points, $K_{\tilde{Y}}$ is trivial.
In particular, $\tilde{Y}$ is either a torus or a K3 surface.
By Lemma 3.1 \cite{Zha91}, $\tilde{Y}$ is a K3 surface.
We show in the following that $Y$ is irreducible symplectic or irreducible Calabi-Yau in the sense of \cite{HP} which coincides with the case of surface.
Recall that the following definition is given in \cite{BGL}. Let $X$ be a normal compact K\"ahler complex space of dimension $n \geq 2$ with rational singularities. We call
$X$ irreducible holomorphic symplectic (IHS) if for all quasi-\'etale covers $q: X' \to X$, the algebra $H^0(X',\Omega^{[\bullet]}_{X'})$ is generated by a holomorphic symplectic form  $\tilde{\sigma} \in  H^0( X', \Omega^{[2]}_{X'})$.
We call $X$ irreducible Calabi–Yau (ICY) if for all quasi-\'etale covers $q: X' \to X$, the algebra $H^0(X',\Omega^{[\bullet]}_{X'})$ is generated by a nowhere vanishing reflexive form in degree
$\dim X$.
Recall that if $X$ is a normal complex space, the sheaf of reflexive p-forms $\Omega^{[p]}_X$ may be defined as the push-forward $j_* \Omega^p_{X_{\reg}}$ from the regular
locus $j: X_{\reg} \to X$ which is reflexive.


Now we show the condition on further quasi-\'etale cover.
Let $Z$ be a quasi-\'etale cover of $Y$. 
Then $K_Z$ is trivial since $Y$ is. Thus the minimal resolution of $Z$ is either a K3 surface or a torus.
In the second case, $Z$ itself is the minimal resolution since a torus does not contain any rational curve. 
We claim that $Z$ is not a torus.
Otherwise, by Lemma 2.8 of \cite{CGGN20}, let $Z'\to Z \to Y$ be the Galois cover of $Z \to Y$.
By construction, $Z' \to Z$ is quasi-\'etale.
Since $Z$ is smooth, $Z' \to Z$ is \'etale and $Z'$ is a torus.
In particular, $Y$ is a quotient of the torus by a finite group.
By Lemma 3.1 of \cite{Zha91} and the assumption that $Y$ is a quotient of the torus by a finite group, we can have only $A_1, A_3, A_5$ as possible singularity.
We denote the numbers of singular points to be $N_1, N_3,  N_5$ respectively.
Since the mininal resolution of $Y$ is a K3 surface, by holomorphic Lefschetz fixed-point formula,
we have
$$\chi(\cO_Y)=2=\frac{1}{12}(\frac{3}{2}N_1+\frac{15}{4}N_3+\frac{35}{6}N_5).$$
The only possibilities are $N_5=0, N_1=16, N_3=0$ or $N_5=0, N_1=11, N_3=2$ or $N_5=0, N_1=6, N_3=4$.
This contradicts Corollary 3.10 of \cite{Zha91}. 
}
\end{myex}
\section{An equivariant version of GAGA}
Let $X$ be a projective manifold with an (algebraic) action of a finite group $L$. 
Let $E$ be a holomorphic $L-$equivariant vector bundle over $X$.
Then by Serre's GAGA, the holomorphic morphism $L \times \P(E\oplus \cO_X) \to \P(E\oplus \cO_X)$ is the analytification of some algebraic morphism where $L$ acts on $\cO_X$ trivially.
In particular, $E$ is the analytification of some algebraic $L-$equivariant vector bundle.
For other kinds of $L$, we have the following equivariant version of GAGA communicated to us by Brion.
\begin{myprop}(Brion)

 Let $X$ be a projective manifold with an (algebraic) action of a connected reductive group $L$.
Let $E$ be a holomorphic $L-$equivariant vector bundle over $X$.
Then there exists a connected finite cover $L'$ of $L$ such that $E$ is an algebraic $L'-$equivariant vector bundle.
\end{myprop}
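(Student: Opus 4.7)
The plan is to reduce the higher-rank statement to the rank one case of Theorem 5.2.1 of \cite{Bri18} by passing to the projectivization. First, by ordinary Serre GAGA applied to the projective manifold $X$, the holomorphic vector bundle $E$ underlies an algebraic vector bundle, unique up to isomorphism, and the projectivization $\pi \colon \P(E) \to X$ (taken in the Grothendieck convention, so that $\pi_* \cO_{\P(E)}(1) = E$) is a projective algebraic variety. The holomorphic $L$-equivariant structure on $E$ induces canonically a holomorphic $L$-action on $\P(E)$ commuting with $\pi$ and covering the given algebraic $L$-action on $X$, together with a holomorphic $L$-equivariant structure on the tautological line bundle $\cO_{\P(E)}(1)$.

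The next step is to upgrade this holomorphic $L$-action on $\P(E)$ to an algebraic one. Since $\P(E)$ is projective, its automorphism group scheme $\Aut(\P(E))$ is an algebraic group locally of finite type, and the action provides a holomorphic group homomorphism $L \to \Aut(\P(E))$. Since $L$ is connected reductive and the target is algebraic, such a holomorphic homomorphism is automatically algebraic; alternatively one can apply Chow's theorem to the closure of the graph of the action in $\overline{L} \times \P(E) \times \P(E)$ for any projective compactification $\overline{L}$ of $L$, and then intersect back with the open locus $L \times \P(E) \times \P(E)$. This promotes the $L$-action on $\P(E)$ to an algebraic action, with $\pi$ being $L$-equivariant algebraically.

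Finally, one applies Theorem 5.2.1 of \cite{Bri18} to the line bundle $\cO_{\P(E)}(1)$ on the projective variety $\P(E)$, now equipped with its algebraic $L$-action: there exists a connected finite cover $L'$ of $L$ such that the holomorphic $L'$-equivariant structure inherited on $\cO_{\P(E)}(1)$ is algebraic. Pushing forward by $\pi$, the direct image $\pi_* \cO_{\P(E)}(1) = E$ inherits an algebraic $L'$-equivariant structure, and its analytification agrees with the original holomorphic equivariant structure up to twisting by a character of $L'$; since $L'$ is connected reductive, its algebraic and holomorphic characters coincide, so the required twist is itself algebraic and can be absorbed, producing the desired algebraic $L'$-equivariant structure on $E$. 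The main obstacle is precisely the invocation of the rank one theorem, which is the whole reason a finite cover of $L$ is needed; the other delicate point is the passage from holomorphic to algebraic for the action on $\P(E)$, where the reductivity of $L$ is crucial to ensure that the holomorphic homomorphism into the algebraic automorphism scheme is algebraic.
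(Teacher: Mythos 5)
Your strategy---algebraize the induced action on $\P(E)$ and then invoke the rank-one linearization theorem of \cite{Bri18} for $\cO_{\P(E)}(1)$ over a cover $L'$ with trivial Picard group, finally pushing forward along $\pi$---is genuinely different from the paper's proof, and its last two steps are essentially sound (indeed the paper itself uses exactly this rank-one input in the remark following the proposition; note also that the statement only asks for \emph{some} algebraic $L'$-equivariant structure, so your character-twisting discussion is more than is required). The genuine gap is the middle step, where you pass from the holomorphic $L$-action on $\P(E)$ to an algebraic one. The asserted principle ``a holomorphic homomorphism from a connected reductive group into the automorphism group scheme of a projective variety is automatically algebraic'' is false: $\Aut^{\circ}$ of a projective manifold need not be a \emph{linear} algebraic group, and holomorphic homomorphisms into a group with a nontrivial abelian-variety quotient need not be algebraic. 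For instance, if $C=\C/\Lambda$ is an elliptic curve and $v\in\C$ satisfies $2\pi i\,v\in\Lambda$, then $e^{t}\mapsto tv \bmod \Lambda$ is a non-constant holomorphic, non-algebraic homomorphism $\C^{*}\to C\subset \Aut^{\circ}(C\times\P^{1})$, giving a holomorphic but non-algebraic $\C^{*}$-action on a projective manifold. The alternative you propose, applying Chow's theorem to the closure of the graph of the action in $\overline{L}\times\P(E)\times\P(E)$, is also incomplete as stated: the closure of an analytic subset in a compactification is not automatically analytic (one would need a Bishop-type extension theorem or a volume bound), and even granting analyticity one still has to argue that the resulting algebraic correspondence is the graph of a morphism of varieties and a homomorphism of algebraic groups.

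The gap is repairable, but only by using the extra structure you have: the action preserves the fibration $\pi$ and covers the given algebraic action on $X$, so the relevant homomorphism lands in the group of automorphisms of $\P(E)$ lying over automorphisms of $X$, whose kernel $\Aut_{X}(\P(E))$ is affine; over the linear algebraic image of $L$ in $\Aut(X)$ this is a linear algebraic group, and holomorphic homomorphisms from a connected reductive group into a \emph{linear} algebraic group are algebraic. Making this precise, however, essentially reconstructs the paper's own argument, which avoids algebraizing the action altogether: GAGA is applied only to each isomorphism $t^{*}E\simeq E$ to see that $L$ lies in the image of $\pi_{*}\colon G\to\Aut(X)$, where $G$ is the locally algebraic group of algebraic fibrewise-linear automorphisms of $E$ (Proposition 6.3.2 of \cite{BSU13}); then one studies the connected linear algebraic group $H=\pi_{*}^{-1}(L)$, whose kernel over $L$ is the connected affine group $\Aut(E)$, and uses structure theory (Levi decomposition, character lattices of maximal tori, root subgroups, or the decomposition $L=C(L)\cdot[L,L]$) to exhibit a connected reductive subgroup $L'\subset H$ mapping onto $L$ with finite kernel; the inclusion $L'\subset G$ is then the desired algebraic equivariant structure. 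A side effect of that route, which your reduction via $\P(E)$ cannot reproduce, is that mere $L$-invariance of $E$ (rather than a full holomorphic equivariant structure) already suffices, as the paper points out after the proof.
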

\begin{proof}
Recall that a holomorphic (resp. algebraic) equivariant vector bundle is a pair consisting of a holomorphic (resp. algebraic) vector bundle and the lifting of the holomorphic (resp. algebraic) action $L \times X \to X$ to that of $ L \times E \to E$ so that the natural projection $\pi: E \to X$ is equivariant. 
In particular, the action on $E$ is linear in each fibre which means that for any $t \in L$, any $x \in X$, the restriction of the action of $t$ on $E_x$ to $E_{t \cdot x}$ is a linear isomorphism.

Since $X$ is projective, by Serre's GAGA, the vector bundle $E$ is the analytification of some algebraic vector bundle which we still denote by $E$ with abuse of notation.
Let $G$ be the group of algebraic automorphisms of $E$ which is linear in each fibre.
One can show that
$G$ is a locally algebraic group (i.e. locally of finite type) by Proposition 6.3.2 of \cite{BSU13}.
There exists a natural morphism $\pi_*$ from $G$ to $\Aut(X)$.
(
For example, $\pi_*$ is induced by the restriction of the automorphism of $E$ to the zero section of $E$.)
By assumption, $t^* E$ is holomorphic isomorphic to $E$ for any $t\in L$.
Again by Serre's GAGA, this isomorphism is the analytification of some algebraic isomorphism.
In particular, for any $t \in L$, there exists an algebraic automorphism of $E$ which is linear in each fibre such that its image under $\pi_*$ is the automorphism of $X$ induced by the action of $t$ on $X$.
In other words, $L$ is contained in the image of $\pi_*$.

We claim that there exists a connected reductive algebraic subgroup $L'$ of $G$ such that the restriction $\pi_*$ on $L'$ is a finite cover of $L$ in our case.
Since $L'$ is a subgroup of $G$, it induces a lifting of the (algebraic) action $L' \times X \to X$ (naturally induced from $L' \to L$) to an algebraic action of $ L' \times E \to E$ so that the natural projection $\pi: E \to X$ is equivariant.

Now we prove the claim.
The kernel of $\pi_*$ is the group of automorphisms of the (algebraic) vector bundle $E$ (cf. Lemma 6.3.1 \cite{BSU13}).
Since $X$ is compact, the global sections of the endomorphism bundle of $E$ are of finite dimension.
These global sections form an associate algebra.
Let $e_i(1 \leq i \leq N)$ be the basis of these global sections. 
There exist $A_{ij}^k \in \C$ such that $e_i \circ e_j =\sum_k A_{ij}^k e_k$. 
Then $\Ker(\pi_*)$ is defined by $\det(\sum_i A_{ij}^k a_i)_j^k \neq 0$ in $\C^N=\{(a_1,\cdots,a_N)\} \simeq H^0(X,\End(E))$.
Thus $\Ker(\pi_*)$ is a connected affine variety.

Consider $H= \pi_*^{-1}(L) \subset G^{\circ}$ where $G^\circ$ is the component of $G$ containing the identity.
We have an exact sequence of algebraic groups 
$$0 \to \Ker(\pi_*) \to H \to L \to 0$$
which implies that $H \to L$ is a $\Ker(\pi_*)-$principal bundle.
In particular, $H \to L$ is an affine morphism.
Since $L$ is affine, $H$ is also affine.
Moreover $H$ is connected.
Thus $H$ is a connected linear algebraic group.

To continue the proof of the claim, we start the proof of the case that $L=(\C^*)^n$ for some $n \in \N$ (i.e. $L$ is an algebraic torus).
In this case, a stronger result than our Proposition is proven in \cite{Ste21} if $X$ is toric and $E$ is a toric vector bundle:
the analytification gives an equivalence of categories between algebraic toric vector bundles on $X$ and holomorphic toric vector bundles
on $X^{\mathrm{an}}$.
Notice that here we do not assume that $E$ is toric (as the action on $X$ is not necessarily the toric action).

Let $T_H$ be a maxiaml torus of $H$.
By Proposition 11.14 of \cite{Bo91}, the restriction of $H \to L$ on $T_H$ is also surjective.
By Corollary 8.3 of \cite{Bo91}, we have a bijective contravariant correspondence between algebraic tori and free abelian groups of finite rank by associating an algebraic torus with its character group.
The surjection $T_H \to L$ corresponds to the inclusion of character group $i: X^*(L) \to X^{*}(T_H)$.
Let $f_\alpha (1 \leq \alpha \leq r)$ be a basis of $X^*(L)$.
Complete $(i(f_\alpha))_\alpha$ by elements $g_\beta(1 \leq \beta \leq s)$ of $X^{*}(T_H)$ such that $i(f_\alpha), g_\beta$ form a $\Q-$linear basis of $X^{*}(T_H) \otimes_\Z \Q$.
Let $\Lambda$ be the abelian group generated by $i(f_\alpha), g_\beta$ as a standard basis.
There exists $N \in \N^*$ such that $X^{*}(T_H) \subset \frac{1}{N} \Lambda \subset X^{*}(T_H) \otimes_\Z \Q$.
Let $X^*(L')$ be the abelian group generated by $N$ times the first $r$ elements in the standard basis of $\Lambda$ with natural map $X^*(T_H) \to X^*(L')$ induced from the projection on the first $r$ components.
Then the composition map of $X^*(L) \to X^*(T_H) \to X^*(L')$ has finite cokernel.
This corresponds to an algebraic subtorus $L'$ of $T_H$ such that $L' \to T_H \to L$ is finite.

Now we turn to the proof of the general case.
By 11.22 of \cite{Bo91}, there exists a Levi subgroup $M$ of $H$ which is a connected subgroup such that
$H$ is the semi-direct product of $M$ and $R_u(H)$ the unipotent radical of $H$.
Since $L$ is reductive, the image of $R_u(H)$ is contained in $R_u(L)$ which is trivial.
In particular, there exists a surjective morphism of connected reductive groups $M \simeq H/R_u(H) \to L$ which induces a surjection of Lie algebras.

Let $T_L$ (resp. $T_M$) be a maximal torus of $L$ (resp. $M$) such that $T_M$ maps surjectively onto $T_L$.
We have the decomposition of Lie algebras in eigenspaces via the adjoint representation
$$\Lie(M)=\Lie(T_M) \oplus \oplus_{\alpha \in \Phi(M)} \g_{\alpha,M}$$
where $\Lie()$ means the corresponding Lie algebra of the group and $\g_{\alpha,M}:=\{\eta \in \g | \mathrm{Ad}(t) \cdot \eta = \alpha(t) \eta, \forall t \in  T_M\}$ and $\Phi(M)$ are the roots.
We have a similar decomposition for $L$.
The surjection of Lie algebras implies that for any $\alpha \in \Phi(L)$, there exists some $\phi(\alpha) \in \Phi(M)$ such that the restriction $\Lie(M) \to \Lie(L)$ on $\g_{\phi(\alpha)}$ is an isomorphism onto $\g_\alpha$.
By the structure of connected reductive Lie group (cf. e.g. \cite{Sp98}), there exists a unique closed connected unipotent
subgroup $U_{\phi(\alpha)} \subset M$ normalised by $T_M$ such that $\Lie(U_{\phi(\alpha)})=\g_{\phi(\alpha)}$.
Let $T'_M$ be an algebraic subtorus of $T_M$ constructed in the previous case such that $T'_M \to T_L$ is finite.
Define $L'$ to be the subgroup of $M$ generated by $T'_M$ and all the $U_{\phi(\alpha)}$'s for $\alpha \in \Phi(L)$.
By Proposition 2.2, Chapter 1 \cite{Bo91}, the Lie group $L'$ is also closed.
The surjection of $\Lie(L') \to \Lie(L)$ implies that $L' \to L$ is surjective.
By dimension reason, $L' \to L$ is finite.
The Lie group $L'$ is connected since all groups generating it are connected.

In the end, we can also argue as follows. 
By 14.2 \cite{Bo91}, $L=C(L) \cdot [L,L]$ where $C(L)$ is the maximal algebraic torus contained in the center of $L$ and $C(L)\cap [L,L]$ is finite.
Note that $[L,L]$ is semisimple and $L/C(L) \simeq [L,L]$.
The surjection $M \to L$ implies the surjections $C(M) \to C(L)$, $[M,M] \to [L,L]$.
Let $C'(M)$ be an algebraic subtorus of $C(M)$ constructed in the previous case such that $C'(M) \to C(L)$ is finite.
Let $ \Lie([M,M]) \to \Lie([L,L])$ be the induced surjection of Lie algebras.
Decompose $\Lie([M,M]), \Lie([L,L])$ into direct sum of simple Lie algebras.
By the definition of simple Lie algebra, any simple piece of $\Lie([M,M])$ either maps isomorphically into some simple piece of $\Lie([L,L])$ or maps to 0.
Thus there exists a sub Lie algebra of $\Lie([M,M])$ such that the restriction of $\Lie([M,M]) \to \Lie([L,L])$ on it is an isomorphism.
Consider the corresponding Lie subgroup of $[M,M]$ of this sub Lie algebra and consider the closed Lie subgroup of $M$ generated by this Lie subgroup of $[M,M]$ and $C'(M)$.
This gives the connected closed reductive subgroup $L'$.
\end{proof}
Notice that the proof in fact shows a result slightly stronger:
Let $X$ be a projective manifold with an (algebraic) action of a connected reductive group $L$.
Let $E$ be a holomorphic $L-$invariant vector bundle over $X$ (i.e. for any $t \in L$, $t^* L \simeq L$).
Then there exists a connected finite cover $L'$ of $L$ such that $E$ is an algebraic $L'-$equivariant vector bundle.
\begin{myrem}
{\em 
Notice that the construction of $L'$ a priori depends on the vector bundle $E$.
If the rank of $E$ is 1, $L'$ can be chosen to be independent of $E$.
By Theorem 5.2.1 of \cite{Bri18}, $E$ is always $L$-invariant in this case. Moreover, there exists a positive integer $n$ such that $E^{\otimes n}$ is $L$-equivariant; we may take for $n$ the exponent of the finite abelian Picard group of $L$.
In our case, let $\widetilde{[L,L]}$ be the universal cover of $[L,L]$.
Consider $L':= C(L) \times \widetilde{[L,L]}$.
Since the Picard group of an algebra torus or a simply connected semisimple Lie group is trivial,
the Picard group of $L'$ is trivial.
Since $C(L)$ is contained in the center of $L$, the natural morphisms $C(L) \to L$, $\widetilde{[L,L]} \to [L,L] \to L$ induces a finite cover $L' \to L$.
In particular, for any line bundle $E$, $E$ is always $L'-$equivariant by Theorem 5.2.1 of \cite{Bri18}.
Notice that $E$ is not necessarily $L-$equivariant.
Such an example is constructed for example in Example 4.2.4 of \cite{Bri18}.
Notice that Brion's example is neither holomorphically $L-$equivariant nor algebraically $L-$equivariant.
It seems to be unclear in general whether a holomorphic $L-$equivariant vector bundle over a projective manifold is necessarily algebraically $L-$equivariant or not.
}
\end{myrem}
\begin{myrem}
{\em 
Our result can be reformulated as follows.
Let $X$ be a projective manifold with an (algebraic) action of a connected reductive group $L$.
Consider the following two categories.
The elements of the categories are the holomorphic (resp. algebraic) $L'-$equivariant vector bundles over $X$ where $L'$ is some connected finite cover of $L$.
The morphisms between an $L_1-$equivariant bundle $E_1$ and an $L_2-$equivariant bundle $E_2$ are the holomorphic (resp. algebraic) $L_3-$equivariant bundle morphisms where $L_3$ is some common finite cover of $L_1$ and $L_2$ with natural induced actions on $E_1$ and $E_2$.
Consider the functor of analytification.
Then our result states that this functor is an equivalence of categories.
In fact, Proposition 7 implies that the analytification functor is essentially surjective.
By Serre's GAGA, the analytification functor is full which is easy to check that it is faithful.

}
\end{myrem}
In the end, we give a characterisation of projective toric variety.
\begin{myprop}
Let $X$ be a connected compact K\"ahler manifold.
Let $\Aut^\circ(X)$ be the connected component of the automorphism group.
Let $G \subset  \Aut^\circ(X)$ be a connected reductive group.
Assume that there exists $z \in X$ such that $G \cdot z$ is open and dense (i.e. $X$ is $G-$quasi homogeneous).
Assume that $G$ acts on $\Alb(X)$ trivially.

Then $X$ is projective.
\end{myprop}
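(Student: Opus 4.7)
The plan is to derive $H^0(X, \Omega^p_X) = 0$ for every $p \geq 1$; once this is established, projectivity of $X$ follows from Hodge decomposition and the Kodaira embedding theorem. I first analyse the Albanese map $\alpha : X \to \Alb(X)$. Since $G \subset \Aut^\circ(X)$ is connected, it acts on $\Alb(X)$ by translations; by hypothesis this action is trivial, hence $\alpha(g \cdot x) = \alpha(x)$ for all $g \in G$ and $x \in X$. Because $G \cdot z$ is open and dense, continuity of $\alpha$ then forces $\alpha$ to be constant on $X$. As $\alpha(X)$ generates $\Alb(X)$ as a group, $\Alb(X)$ must reduce to a point, so $h^{1,0}(X) = h^{0,1}(X) = 0$.

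Next I would show, by induction on $p \geq 1$, that $H^0(X, \Omega^p_X) = 0$, the case $p = 1$ being the vanishing just obtained. For $p \geq 2$, let $\omega \in H^0(X, \Omega^p_X)$. For any $\xi \in \g$ the fundamental vector field $\xi^\#$ on $X$ is holomorphic, hence $i_{\xi^\#}\omega$ lies in $H^0(X, \Omega^{p-1}_X) = 0$ by induction. Therefore $\omega(\xi_1^\#, \ldots, \xi_p^\#) \equiv 0$ for all $\xi_i \in \g$. At every point of $G \cdot z$ the fundamental vector fields span the holomorphic tangent space, since transitivity of the $G$-action implies that the infinitesimal action $\g \to T_{z'}X$ is surjective at each $z' \in G \cdot z$. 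Thus $\omega$ vanishes identically on the open orbit, and hence on all of $X$ by the identity principle.

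Combining the vanishing $H^{p,0}(X) = 0$ for every $p \geq 1$ with Hodge symmetry gives $H^2(X, \C) = H^{1,1}(X)$, so $H^{1,1}(X, \R) = H^2(X, \R)$. Rational classes are therefore dense in $H^{1,1}(X, \R)$, and since the K\"ahler cone is a nonempty open subcone of $H^{1,1}(X, \R)$ it contains a rational, and after clearing denominators an integral, K\"ahler class. Kodaira's embedding theorem then furnishes an ample line bundle on $X$, whence $X$ is projective. The only delicate step is the contraction-and-induction argument of the second paragraph; reductivity of $G$ is not invoked directly in the proof but enters through the hypothesis that $G$ acts trivially on $\Alb(X)$, which for a connected reductive group amounts to a condition on its central torus.
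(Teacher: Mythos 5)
Your proof is correct, but it takes a genuinely different and more elementary route than the paper's. You reduce everything to Hodge-theoretic vanishing: triviality of the $G$-action on $\Alb(X)$ makes the Albanese map $G$-invariant, hence constant on the dense orbit and therefore constant, so $h^{1,0}(X)=0$; then contracting a holomorphic $p$-form with the fundamental vector fields of $\g$ (holomorphic because $G$ is a complex subgroup of $\Aut^\circ(X)$, so the infinitesimal action lands in $H^0(X,T_X)$ -- the one point worth making explicit), which span the tangent space along the open orbit, kills $H^{p,0}(X)$ for all $p\ge 1$ by induction and the identity principle, and Kodaira's criterion ($h^{2,0}=0$ plus K\"ahler) gives projectivity. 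The paper argues instead through Fujiki's meromorphic structure $F(X)_{\red}$ on $\Aut^\circ(X)$ and the Barlet cycle space \cite{Fu78}: Theorem 4.6 of \cite{BG19} translates the hypothesis on $\Alb(X)$ into compactness of the closure of $G$, Theorem 4.11 of \cite{BG19} makes the closure of $G$ in the cycle space projective, and the meromorphic extension of the action lets one exhibit $X$ as the image of a projective variety, hence Moishezon, and Moishezon plus K\"ahler gives projective. Your approach buys simplicity and a bit more: it never uses reductivity of $G$ (only connectedness and holomorphy of the action) and it yields the stronger conclusion $h^{p,0}(X)=0$ for all $p\ge 1$. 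What the paper's heavier route buys is compatibility with the machinery of \cite{BG19}: as the remark following the proposition indicates, that argument would extend to manifolds of class $\mathcal{C}$ (with the conclusion that $X$ is Moishezon) if the results of \cite{BG19} do, whereas your argument is tied to the K\"ahler Hodge decomposition and Kodaira's projectivity criterion.
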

\begin{proof}
Let $F (X)$ be
the irreducible component of the Douady space of $X \times X$ containing the diagonal.
Let $F(X)_\red$ be the reduced reduction of $F(X)$.
By the fundamental work of Fujiki \cite{Fu78}, 
$F(X)_\red$ is a meromorphic
structure on $\Aut^\circ(X)$.
More precisely,  $F(X)_\red$ is a compact complex space containing $\Aut^\circ(X)$ 
as a dense open subset such that the product
and the inversion of $\Aut^\circ(X)$ extend as meromorphic maps on $F(X)_\red$.

By Theorem 4.6 \cite{BG19}, the group $G$ acts on $\Alb(X)$ trivially if and only if the closure of $G$ in $F(X)_\red$ is compact and contains $G$ has a dense open set such that the product
and the inversion of $G$ extend as meromorphic maps on the closure.

Let $B(X)$ be the closure of $\{\Gamma_f,f \in \Aut^\circ(X)\} \simeq \Aut^\circ(X)$ the graphs $\Gamma_f$ of $f$ in the  Barlet cycle space of $X \times X$.
Then $B(X)$ is an irreducible component of the cycle space of $X \times X$ and $B(X) \setminus B^\circ(X)$ is closed analytic (cf. Proposition 3.1 \cite{BG19}).

By Theorem 4.11 \cite{BG19}, the closure of $G$ in $B(X)$ is a projective variety.
The restriction of the natural morphism from Douady space to cycle space is a modification of the closure of $G$ in $B(X)$.
In particular, the closure $\overline{G}$ of $G$ in $F(X)_\red$ has a smooth projective bimeromorphic model.

By Proposition 2.2 \cite{Fu78}, the action $G \times X \to X$ extends to a meromorphic map on the closure of $G$ in $F(X)_\red$ times $X$.
Consider the projection of the intersection of its graph $\Gamma_{\overline{G} \times X \to X}$ with $\overline{G} \times \{z\}$ (which is projective) into $X$.
The image of the projection contains the orbit $G \cdot z$.
By Remmert's proper mapping theorem, the image is a closed analytic subset of $X$.
Thus by our assumption, the image is $X$.
Thus $X$ is Moishezon as the image of surjective morphism from a projective variety.
Since $X$ is also K\"ahler,
$X$ is projective.
\end{proof}
\begin{myrem}
{\em
The statement is false for an arbitrary compact complex manifold.
For example, Calabi–Eckmann manifold $X$ is a $\GL(m,\C) \times \GL(n,\C)-$homogeneous (for some $m,n \geq 2$) manifold.
If $m \neq n$, the $m-$th and $n-$th Betti number is 1,
thus it is not a manifold of class $\mathcal{C}$.
The action of $\Aut^\circ(X)$ on the Dolbeault cohomologies is trivial as shown in Lemma 9.3 \cite{Hir66} by Borel.

It is asked in Remark 4.12 \cite{BG19} whether their results hold for a compact complex manifold of class $\mathcal{C}$.
If the response is positive, the previous proposition can also be generalised to a compact complex manifold of class $\mathcal{C}$ with the conclusion that $X$ is Moishezon.


}
\end{myrem}    
\begin{mycor}
Let $X$ be a connected compact K\"ahler manifold which is $(\C^*)^n-$quasi-homogeneous with $n=\dim_\C(X)$ such that the action on the open dense set is the multiplication of $(\C^*)^n$.
Then $X$ is a projective toric variety.
\end{mycor}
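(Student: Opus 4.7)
The plan is to apply the preceding Proposition to the connected reductive subgroup $G=(\C^*)^n\subset \Aut^\circ(X)$. Three of its hypotheses are immediate --- $X$ is compact K\"ahler, $G$ is connected reductive, and $X$ is $G$-quasi-homogeneous with open dense orbit $U\cong G$ --- so what remains is to verify that $G$ acts trivially on $\Alb(X)$. Once this is done, the Proposition yields $X$ projective, and since $X$ is then a smooth projective compactification of $(\C^*)^n$ equipped with its natural translation action, the standard definition identifies $X$ as a smooth projective toric variety.

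Since $G$ is connected, its action on $\Alb(X)$ is by translations through a holomorphic homomorphism $\phi\colon G\to \Alb(X)$ satisfying $\alpha(g\cdot x)=\alpha(x)+\phi(g)$, where $\alpha\colon X\to \Alb(X)$ is the Albanese map. As $G\cdot z_0$ is dense in $X$ and $\alpha(X)$ generates $\Alb(X)$ as a group, $\phi\equiv 0$ is equivalent to $\Alb(X)=0$, hence to $H^0(X,\Omega^1_X)=0$. Any $\omega\in H^0(X,\Omega^1_X)$ is $d$-closed on compact K\"ahler $X$ and is $G$-invariant: $G$ connected forces trivial action on $H^1(X,\C)$, so $g^*\omega-\omega$ is $d$-exact, while a $d$-exact holomorphic $1$-form on a compact K\"ahler manifold must vanish. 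Restriction to $U\cong G=(\C^*)^n$ therefore gives $\omega|_U=\sum_{i=1}^n c_i\,dz_i/z_i$ for some $c_i\in\C$.

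Hartogs' theorem, combined with $H^0(X,\mathcal O_X)=\C$ and the infinite-dimensionality of $H^0(U,\mathcal O_U)$, forces $X\setminus U$ to contain at least one irreducible divisor. For each such $D$, a small meridian loop $\gamma_D$ around a smooth point of $D$ bounds a transverse disk inside $X$, so Stokes on the closed $\omega$ gives $\sum_{i} c_i\, n_{i,D}=0$ where $n_{i,D}=\mathrm{wind}(z_i\circ\gamma_D)\in\Z$. The main obstacle is the \emph{span claim}: as $D$ ranges over the irreducible components of $X\setminus U$, the vectors $n_D=(n_{i,D})_i\in\Z^n$ span $\Q^n$. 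The plan to prove it is to produce a $G$-fixed point $p\in X\setminus U$: by the standard existence of limits for holomorphic $\C^*$-actions on compact K\"ahler manifolds, $\lim_{t\to 0}\lambda(t)\cdot z_0$ exists in $X$ for a generic one-parameter subgroup $\lambda\subset G$ and is $G$-fixed, and cannot lie in $U$ since $G$ acts freely there. Linearising the reductive $G$-action at $p$ produces $n$ weights $\mu_1,\ldots,\mu_n\in\Z^n$ on $T_pX$; the existence of an open $G$-orbit forces them to be a $\Z$-basis, and comparing the linearised action with the semi-invariance $z_j(g\cdot x)=g_j z_j(x)$ identifies the monodromy vectors $n_{D_i}$ of the $n$ boundary divisors through $p$ with the dual $\Z$-basis of $\Z^n$. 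With the span claim established, $c=0$ for every $\omega$, so $H^0(X,\Omega^1_X)=0$ and the preceding Proposition applies.
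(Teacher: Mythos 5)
The decisive point of your argument is the span claim, and the only input you offer for it is a $G$-fixed point obtained as $\lim_{t\to 0}\lambda(t)\cdot z_0$, justified by a ``standard existence of limits for holomorphic $\C^*$-actions on compact K\"ahler manifolds''. No such general fact holds in the holomorphic (as opposed to algebraic, or meromorphically extendable) category: on an elliptic curve $E=\C/\Lambda$ with $2\pi i\in\Lambda$, the rule $t\cdot[x]=[x+\log t]$ is a holomorphic $\C^*$-action on a projective manifold with no fixed point and no limits as $t\to 0$. Worse, in your situation the obstruction to such limits is exactly what you are trying to prove: if the action of $(\C^*)^n$ on $\Alb(X)$ were nontrivial, then by your own formula $\alpha(\lambda(t)\cdot z_0)=\alpha(z_0)+\phi(\lambda(t))$, and for any one-parameter subgroup $\lambda$ with $\phi\circ\lambda$ nontrivial the path $\phi(\lambda(e^s))=s\,v \bmod \Lambda$, $s\to-\infty$, has no limit in the compact torus (a limit would satisfy $p=p+s_0v$ for all real $s_0$, forcing $v=0$); hence the orbit has no limit in $X$ either. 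So the generic one-parameter subgroup has a limit only after one already knows the Albanese action is trivial, and the fixed-point step, the span claim, and the vanishing $H^0(X,\Omega^1_X)=0$ remain unproven. (The earlier parts are fine: $G$-invariance of holomorphic $1$-forms, $\omega|_U=\sum_i c_i\,dz_i/z_i$, vanishing of the periods $\sum_i c_i n_{i,D}$; and $X\setminus U$ is indeed a divisor, being the zero set of $v_1\wedge\cdots\wedge v_n\in H^0(X,\det T_X)$ for the fundamental vector fields.)

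For comparison, the paper does not try to kill $H^0(X,\Omega^1_X)$ by residues; it rules out a nontrivial action on $\Alb(X)$ by a short group-theoretic argument: the equivariant composition $f:(\C^*)^n\to X\to\Alb(X)$ would be nonconstant with connected kernel-fibre $f^{-1}(f(1))\simeq(\C^*)^d$, and the quotient $(\C^*)^{n-d}$ would act effectively by translations, giving an injective morphism into $\Aut^\circ(\Alb(X))\simeq\Alb(X)$ which (using local triviality of the Albanese map for the quasi-homogeneous $X$) is an isomorphism for dimension reasons; a positive-dimensional affine torus is not isomorphic to a compact complex torus, a contradiction. If you wish to salvage your residue approach, you need an independent proof of the span claim (equivalently, that no nonzero invariant logarithmic $1$-form on $U$ extends holomorphically to $X$), and that seems to require precisely the global input -- meromorphic extension of the action in the sense of Fujiki/[BG19], or the triviality of the Albanese action itself -- that the paper's Proposition is designed to supply.
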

\begin{proof}
To apply the previous Proposition 8, it is enough to show that $(\C^*)^n$ acts on $\Alb(X)$ trivially. 
Note that a posterior, any projective smooth toric variety is simply connected (Theorem 12.1.10 \cite{CLS}).
In particular, the first Betti number is 0 and by Hodge decomposition, the Albanese torus is trivial.

Let $U=(\C^*)^n \cdot x$ be the open dense orbit and $\alpha:  X \to \Alb(X)$ be the Albanese map.
Since $X$ is also $\Aut^\circ(X)-$quasi-homogeneous, the Albanese map of $X$ is locally trivial by Remark 4 \cite{Wu21b}.
In particular, $\alpha^{-1}(\alpha(x))$ is a connected compact complex manifold.
Thus $\alpha^{-1}(\alpha(x)) \cap (X \setminus U)$ is a proper closed analytic subset of $\alpha^{-1}(\alpha(x))$.
In particular, $\alpha^{-1}(\alpha(x)) \cap U$ is connected.

Assume by contrary that the action on $\Alb(X)$ is not trivial.
In particular, the Albanese torus has a positive dimension.
Consider the composition map $f:(\C^*)^n \to X \to \Alb(X) $ which is $(\C^*)^n-$equivariant. 
Thus $f$ is not a constant map.
Then $f^{-1}(f(1))$ is a proper closed subgroup of $(\C^*)^n$.
Every closed subgroup of $\C^*$ is either trivial, $\C^*$ or discrete. 
By connectedness, $f^{-1}(f(1))$ is isomorphic to $(\C^*)^d$ for some $d$.
By density, $d$ is equal to the dimension of the fibres of the Albanese map.

The induced action of $(\C^*)^n/ f^{-1}(f(1))$ on $\Alb(X)$ is effective,
thus the induced group morphism $(\C^*)^n/ f^{-1}(f(1)) \to \Aut^\circ(\Alb(X)) $ is injective.
Since both sides are connected and have the same dimension, this is an isomorphism.
Both sides are thus trivial.
Contradiction with the dimension of $\Alb(X)$.
\end{proof}
Note that by a theorem of Sumihiro \cite{Sum74}, for a connected projective manifold which is $(\C^*)^n-$quasi homogeneous with $n=\dim_\C(X)$ such that the action on the open dense set is the multiplication of $(\C^*)^n$,
it is necessarily constructed from a fan (cf. Corollary 3.8 \cite{CLS}).

\end{document}